\definecolor{brightmaroon}{rgb}{0.76, 0.13, 0.28}
\newcommand{\spec}{\mathrm{Spec}(S)}
\newcommand{\s}{S}
\newcommand{\id}{\mathrm{Id}(S)}
\newcommand{\idp}{\mathrm{Id}^+(S)}
\newcommand{\mx}{\mathrm{Max}(S)}
\DeclareMathOperator{\jr}{\mathcal{J}(S)}
\DeclareMathOperator{\z}{\mathrm{Id}_z(S)}
\newcommand{\ma}{\mathfrak{a}}
\newcommand{\mb}{\mathfrak{b}}
\newcommand{\mc}{\mathfrak{c}}
\newcommand{\p}{\mathfrak{p}}
\newcommand{\q}{\mathfrak{q}}
\newcommand{\m}{\mathfrak{m}}
\newcommand{\mz}{\mathfrak{z}}
\DeclareMathOperator{\cz}{\mathrm{c}\;\!\!\ell_{\!z}}
\newtheorem{theorem}{Theorem}[section]
\newtheorem{proposition}[theorem]{Proposition}
\newtheorem{lemma}[theorem]{Lemma}
\newtheorem{corollary}[theorem]{Corollary}
\theoremstyle{definition}
\newtheorem{definition}[theorem]{Definition}
\newtheorem{example}[theorem]{Examples}
\newtheorem{remark}[theorem]{Remark}
\numberwithin{equation}{section}
\begin{document}

\author{Amartya Goswami}
\address{
[1] Department of Mathematics and Applied Mathematics, University of Johannesburg, 2006, South Africa. [2] National Institute for Theoretical and Computational Sciences (NITheCS), Johannesburg, 2006, South Africa.}
\email{agoswami@uj.ac.za}

\title{On $z$-ideals and $z$-closure operations of semirings, I}

\subjclass{16Y60, 16D25}






\keywords{semiring; $z$-ideal; semiprime ideal; strongly irreducible ideal}

\maketitle

\begin{abstract}
The aim of this series of papers is to study $z$-ideals of semirings. In this article,  we introduce some distinguished classes of $z$-ideals of semirings, which include $z$-prime, $z$-semiprime,  $z$-irreducible, and $z$-strongly irreducible ideals and study some of their properties. Using a $z$-closure operator, we show the equivalence of these classes of ideals with the corresponding $z$-ideals that are prime, semirprime, irreducible, and strongly irreducible respectively.  
\end{abstract}
  
\section{Introduction}

Since the introduction of $z$-ideals in the context of rings of continuous functions  in \cite{Koh57}, a significant amount of interest has been generated in studying these types of ideals. The textbook  \cite{GJ60} became an authoritative source for $z$-ideals in these rings. Some further references in this direction include \cite{AKRA99, AM07, AP20, Mar72, HMW03, Don80, Mas80, Man68}. The definition of a $z$-ideal in a commutative ring first appeared in \cite{Mas73} and has since been expanded in various directions. For example, the study of $z$-ideals and $z^0$-ideals in power series rings can be found in \cite{AM13, MB20}. Higher-order $z$-ideals and extensions of $z$-ideals in rings have been discussed in \cite{DO16, ABN20, BM20, MB22}.  The concept of relative $z$-ideals was introduced in \cite{AAT13}. An extension of $z$-ideals for noncommutative rings can be found in \cite{MJ20}. For a study of fuzzy $z$-ideals, we refer to \cite{XZ09}. 

From a functional analysis perspective, $z$-ideals have been studied in \cite{HP80-1, HP80, Pag81} for Riesz spaces. In a more abstract setting, $z$-ideals have been introduced for lattices and multiplicative lattices, respectively, in \cite{JK19} and \cite{MC19}. In the context of point-free topology, these ideals have been extensively investigated in \cite{Dub16, Dub18}.

The objective of this series of articles is to explore $z$-ideals of commutative semirings. A semiring serves as a generalization of a ring by relaxing the requirement for additive inverses.  These algebraic structures have drawn a considerable interest due to the development of tropical geometry, where one studies an algebraic variety  through its combinatorial shadow, which takes the form of a polyhedral complex derived from the underlying set of the algebraic variety and a valuation on a ground field (see \cite{CC19, GG16, JK19, Lor19, Vir10}). As a natural extension of rings of continuous functions, \cite{VMS19} explored $z$-ideals in semirings of continuous functions. Additionally, the concept of $z$-ideals has been introduced for positive semirings in \cite{Moh14}.

In this paper, we expand upon the definition of $z$-ideals in rings, as introduced in \cite{Mas73}, and extend it to semirings. We study various properties of these ideals. Our approach involves the utilization of a closure operator, referred to as `$z$-closure,' defined on all ideals within a semiring (see Definition \ref{defcl}). This operator provides an alternative definition for a $z$-ideal. Additionally, the introduction of these closure operators serves the purpose of  examining several distinguished types of $z$-ideals.

An interesting motivation for this work arises from a result in \cite{JRT22}, where it was shown that:
\begin{equation}\label{kppk}
k\text{-prime ideal}\Leftrightarrow \text{prime ideal} +k\text{-ideal,}
\end{equation}
Here, a $k$-ideal is defined according to \cite{H58}, and a $k$-prime ideal is obtained by restricting the definition of a prime ideal to $k$-ideals.

Motivated by (\ref{kppk}), we introduce various types of $z$-ideals in semirings. Using our $z$-closure operators, we establish the following equivalence formulations:
\begin{equation}\label{kxkx}
z\text{-($\star$) ideal}\Leftrightarrow z\text{-ideal} + (\star)\; \text{ideal},
\end{equation}
where `$(\star)$' stands for: maximal, prime, semiprime, irreducible, and strongly irreducible.
 
For instance, to prove properties related to prime $z$-ideals, it suffices to prove them for $z$-prime ideals (see Definition \ref{kpri}). Remarkably, we find that all the distinguished types of $z$-ideals considered in this paper adhere to the equivalent formulation (\ref{kxkx}).

We now briefly describe the content of the paper. In \textsection \ref{prlm}, we gather the required background on semirings and their ideals. In \textsection \ref{zid}, we introduce the notion of a $z$-closure operation and discuss various properties of these operators. We study some properties of $z$-ideals of semirings. We provide some examples of $z$-ideals of semirings. In \textsection \ref{sczi}, we introduce a series of new types of $z$-ideals, namely: $z$-maximal, $z$-prime, $z$-semiprime,  $z$-irreducible, and $z$-strongly irreducible ideals. Using the $z$-closure introduced in \textsection \ref{zid}, we prove the equivalence formulation (\ref{kxkx}) for all these types of $z$-ideals. 

Here are the main results that we prove. We establish the equivalence between $z$-semiprime and $z$-radical ideals (Theorem \ref{spkr}). We provide a representation of any $z$-ideal in terms of $z$-irreducible ideals (Theorem \ref{rpir}). We demonstrate a relationship between $z$-prime, $z$-semiprime, and $z$-strongly irreducible ideals (Theorem \ref{vpss}). We provide a (partial) characterization of arithmetic semirings (Theorem \ref{pcas}). In a $z$-Noetherian semiring (see Definition  \ref{znsr}), we present a representation of $z$-ideals in terms of $z$-strongly irreducible ideals (Proposition \ref{nssi}).

\section{Preliminaries} \label{prlm}

In this section, we provide a brief overview of some key definitions and results concerning semirings and their ideals. For detailed proofs and further exploration of these topics, we refer to \cite{G99}. A \emph{commutative semiring} is a system $( S,+,0,\cdot, 1)$ such that
(i) $( S,+,0)$ is a commutative monoid,
(ii)  $(S, \cdot,1)$ is a commutative monoid,
(iii)  $0s=0,$ for all $s\in  S,$ and
(iv)  $a\cdot(b+c)=a\cdot b +a\cdot c,$ for all $a$, $b$, and $c\in \s$.
We  write $ab$ for $a\cdot b$, and assume that all our semirings are  commutative. 
An \emph{ideal} $\ma$ of a semiring $\s$ is a nonempty  subset of $ S$ satisfying the conditions:
(i) $a+b\in \ma$ and (ii) $sa\in \ma$,
for all $a,$ $b\in \ma$, and for all $s\in  S.$ By $\id$, we denote the set of all ideals of $S$. We denote the largest ideal of $S$ as $S$ itself and the zero ideal as $0$. An ideal $\ma$ is called \emph{proper} if $\ma\neq S$. We denote the set of proper ideals of a semiring $S$ by $\idp$. If $\{\ma_{i}\}_{i\in I}\in \id$, then $\bigcap_{i\in I} \ma_{i}\in \id$. The \emph{sum} of a family $\{\ma_{i}\}_{i\in I}$ of ideals of  $S$ is defined by
\[\label{sum}
\sum_{i\in I} \ma_{i}=\left\{\sum_{j=1}^n x_{i_j} \mid x_{i_j} \in \ma_{i_j}, n\in \mathds{N}\right\},
\] 
which is also an ideal of $ S$. If $\ma$ and $\mb$ are two ideals of $ S$, then their \emph{product} $\ma\mb$ is the ideal generated by the set $\{a b\mid a\in \ma, b\in \mb\}.$ The following self-evident fact about ideals in semirings will prove to be valuable.

\begin{lemma}
\label{psi}
If $\ma$ and $\mb$ are two ideals of a semiring $S$, then $\ma\mb\subseteq \ma \cap \mb$.
\end{lemma}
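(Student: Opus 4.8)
The plan is to exploit the fact that $\ma\mb$ is by definition the \emph{smallest} ideal containing the generating set $G=\{ab \mid a\in\ma,\ b\in\mb\}$, together with the fact recorded just above in the preliminaries that an arbitrary intersection of ideals is again an ideal, so that $\ma\cap\mb\in\id$. Granting these two facts, it suffices to check the single containment $G\subseteq\ma\cap\mb$: since $\ma\cap\mb$ is then an ideal containing $G$, the minimality of $\ma\mb$ forces $\ma\mb\subseteq\ma\cap\mb$, which is exactly the assertion.

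To verify $G\subseteq\ma\cap\mb$, I would argue at the level of a typical generator $ab$ with $a\in\ma$ and $b\in\mb$. On the one hand, regarding $b$ as an element of $S$ and applying the absorption axiom (ii) for the ideal $\ma$ together with commutativity of the multiplication, one obtains $ab=ba\in\ma$. On the other hand, regarding $a$ as an element of $S$ and applying axiom (ii) for the ideal $\mb$ directly, one obtains $ab\in\mb$. Hence each generator lies in $\ma\cap\mb$, as required, and the containment of generating sets is established.

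There is essentially no genuine obstacle here; the statement is, as the author notes, self-evident, and the only points one must take care to invoke explicitly are (a) commutativity, which is what allows multiplication by $b$ to be absorbed into $\ma$, and (b) the description of $\ma\mb$ as the ideal \emph{generated by} $G$ rather than as the literal set $G$ itself, which in general need not be closed under the addition of $S$. Once $G$ is seen to sit inside the ideal $\ma\cap\mb$, the passage from the generators to the whole generated ideal is immediate by minimality, completing the argument.
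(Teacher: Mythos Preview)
Your argument is correct. The paper does not actually supply a proof of this lemma; it simply labels the statement ``self-evident'' and moves on, so there is nothing substantive to compare against. Your write-up is a faithful unpacking of why the fact is self-evident, and the two points you flag---commutativity for absorption into $\ma$, and the passage from the generating set $G$ to the ideal it generates via minimality---are exactly the ingredients one needs.
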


A semiring $S$ is called \emph{arithmetic} if $\id$ forms a distributive lattice. If $\ma$ and $\mb$ are ideals of $S$,  then the \emph{ideal quotient} or \emph{colon ideal} of $\ma$ over $\mb$ is defined by \[(\ma : \mb)=\{s\in S \mid s\mb\subseteq \ma\}.\]
The \emph{radical} $\sqrt{\ma}$ of an ideal $\ma$ of $S$ is defined by 
\[\label{rada}
\sqrt{\ma}=\left\{ s\in S\mid s^n\in \ma,\;\text{for some}\; n\in \mathds{N}_{>0}\right\}.
\]
It is easy to verify that $\ma\subseteq \sqrt{\ma}$ and $\sqrt{\ma}$ is also an ideal of $ S$. A proper ideal $\ma$ is called a \emph{radical ideal} if $\ma=\sqrt{\ma}.$  A proper ideal $\p$ of  $S$ is called \emph{prime} if $ab\in \p$ implies that $a\in \p$ or $b\in \p$, for all $a,$ $b \in S.$ By $\spec$, we denote the set of all prime ideals of a semiring $S$. It is well-known (see \cite{N18, I59}) that the  radical $\sqrt{\ma}$ of an ideal $\ma$ of a semiring $S$ has the following representation:
\begin{equation}\label{rep}
\sqrt{\ma}=\bigcap_{\p\in \spec} \left\{ \p \mid \ma\subseteq \p \right\}.
\end{equation} 
A proper ideal $\q$ of a semiring $S$ is called \emph{semiprime} if $\ma^2\subseteq \q$ implies $\ma\subseteq \q,$ for all $\ma\in \id$. 
A proper ideal $\m$ of  $S$ is said to be \emph{maximal} if $\m$ is not properly contained in any other proper ideals of $S$. We denote the set of all maximal ideals of $\s$ by $\mx$. It is well-known (see \cite[Corollary 7.13]{G99}) that every maximal ideal in a semiring is prime. The \emph{Jacobson radical} of a semiring $S$ is defined by \[\jr=\bigcap \left\{\m\mid \m\in \mx\right\}.\]  A semiring $S$ is called \emph{semisimple} if $\jr=0.$ A \emph{multiplicatively closed} subset of a semiring $S$ is a subset $X$ of  $S$ such that $1\in X$ and $X$ is closed under multiplication. 

\section{$z$-ideals}
\label{zid}

Since by \cite[Theorem 2]{SZ55}, the set $\mx$ is nonempty, we can define the notion of a $z$-ideal of a semiring.

\begin{definition}
An ideal $\mz$ of a semiring $S$ is called a \emph{z-ideal} if whenever $x\in \mz$, then \[\bigcap\left\{  \m\in \mx\mid x\in \m\right\}\subseteq \mz.\] 	
\end{definition}

By $\z$, we denote the set of all $z$-ideals of a semiring $S$.
Define \[\mathcal{M}_a:=\left\{ \m\in \mx \mid a\in \m\right\}.\] Then the following proposition gives an alternative definition of a $z$-ideal of a semiring.

\begin{proposition}\label{ald}
An ideal $\mz$ of a semiring $S$ is a $z$-ideal if and only if $\mathcal{M}_a=\mathcal{M}_b$ and $b\in \mz $ implies that $a\in \mz$, for all $a$, $b\in S$.
\end{proposition}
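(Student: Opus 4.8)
The plan is to translate the defining condition of a $z$-ideal into a statement purely about the sets $\mathcal{M}_a$, and then to observe that the condition in the proposition is merely a restricted form of it. Writing $\bigcap\mathcal{M}_c$ for $\bigcap\{\m\mid\m\in\mathcal{M}_c\}$, the elementary but decisive observation is that, for every $c\in S$,
\[
\bigcap\mathcal{M}_c=\bigl\{a\in S\mid \mathcal{M}_c\subseteq\mathcal{M}_a\bigr\},
\]
since $a$ lies in every maximal ideal containing $c$ precisely when each maximal ideal containing $c$ also contains $a$, that is, when $\mathcal{M}_c\subseteq\mathcal{M}_a$ (with the usual convention that an empty intersection equals $S$, so the degenerate case $\mathcal{M}_c=\emptyset$ is absorbed automatically). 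In particular $c\in\bigcap\mathcal{M}_c$. Using this, the definition of a $z$-ideal rewrites as the implication
\[
b\in\mz\ \text{and}\ \mathcal{M}_b\subseteq\mathcal{M}_a\ \Longrightarrow\ a\in\mz,\qquad\text{for all }a,b\in S.
\]

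With this reformulation in hand, the forward implication is immediate: if $\mz$ is a $z$-ideal and $\mathcal{M}_a=\mathcal{M}_b$ with $b\in\mz$, then in particular $\mathcal{M}_b\subseteq\mathcal{M}_a$, so $a\in\mz$. Thus the proposition's condition is exactly the special case $\mathcal{M}_a=\mathcal{M}_b$ of the $z$-ideal condition, and nothing further is needed for this direction.

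The substance lies in the converse, where the hypothesis of equality $\mathcal{M}_a=\mathcal{M}_b$ must be leveraged to recover the weaker inclusion $\mathcal{M}_b\subseteq\mathcal{M}_a$. Suppose the stated condition holds, take $b\in\mz$ with $\mathcal{M}_b\subseteq\mathcal{M}_a$, and aim to show $a\in\mz$. The step I would use is to pass to the product $ab$. Since every maximal ideal is prime (recorded in the preliminaries), for each $\m\in\mx$ one has $ab\in\m$ iff $a\in\m$ or $b\in\m$, so $\mathcal{M}_{ab}=\mathcal{M}_a\cup\mathcal{M}_b$; the inclusion $\mathcal{M}_b\subseteq\mathcal{M}_a$ then gives $\mathcal{M}_{ab}=\mathcal{M}_a$. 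At the same time $ab\in\mz$ because $\mz$ is an ideal and $b\in\mz$. Applying the stated condition to the pair $(a,ab)$—which satisfies $\mathcal{M}_a=\mathcal{M}_{ab}$ and $ab\in\mz$—then yields $a\in\mz$, completing the argument.

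The one genuine obstacle is this converse step, and specifically the idea of multiplying by $a$: the product $ab$ is the element that simultaneously remains in $\mz$ (because $\mz$ absorbs multiplication) and carries exactly the maximal-ideal pattern $\mathcal{M}_a$, thereby converting the one-sided inclusion into the equality needed to invoke the hypothesis. The only external input is the primeness of maximal ideals, which identifies $\mathcal{M}_{ab}$ with $\mathcal{M}_a\cup\mathcal{M}_b$; everything else is formal set-theoretic bookkeeping.
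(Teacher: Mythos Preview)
Your proof is correct and follows essentially the same route as the paper's. In both arguments the forward direction is immediate, and for the converse the key step is to pass from the one-sided inclusion $\mathcal{M}_b\subseteq\mathcal{M}_a$ to the equality $\mathcal{M}_a=\mathcal{M}_{ab}$ via the primeness of maximal ideals, then use $ab\in\mz$ to invoke the hypothesis; the paper does exactly this (with the variable names $a$ and $x$ in place of your $b$ and $a$).
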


\begin{proof}
Suppose that $\mz$ is a $z$-ideal and $\mathcal{M}_a=\mathcal{M}_b$ with $b\in \mz, $ for some $a$, $b\in S$. Since $\mathcal{M}_a=\mathcal{M}_b$, we have $a\in \m$, for all $\m\in \mathcal{M}_b$.  Since $b\in \mz$, this implies that $\bigcap \mathcal{M}_b\subseteq \mz$, hence $a\in \mz$. For the converse, let us assume that for every $a \in S,$ $b \in \z$ satisfying
$\mathcal{M}_a = \mathcal{M}_b$, then $a\in \z$. Now take elements $a\in \z$ and $\bigcap\mathcal{M}_a$.
This means that $\mathcal{M}_a\subseteq \mathcal{M}_x$. From this fact, we can infer
that $\mathcal{M}_x = \mathcal{M}_x \cup \mathcal{M}_a = \mathcal{M}_{ax}$. Since $\mz$ is an ideal and $a\in \mz$,
it follows that $ax \in \mz$. Now the equality $\mathcal{M}_x = \mathcal{M}_{ax}$ and the
assumption imply $x\in \mz$, proving that $\mz$ is a $z$-ideal.
\end{proof}

\begin{remark}
It is easy to see that the above equivalent condition can further be relaxed, namely: $\mz$ is a $z$-ideal if and only if $\mathcal{M}_a\supseteq \mathcal{M}_b$ and $b\in \mz $ implies that $a\in \mz$.
\end{remark}

In the next proposition, we gather some elementary properties of $z$-ideals of a semiring. 

\begin{proposition}\label{epzi}
Let $\s$ be a semiring.
\begin{enumerate}
		
\item\label{ainz} If $\{\mz_{i}\}_{i\in I}$ is a family of $z$-ideals of $S$, so is their intersection.
		
\item The Jacobson radical $\mathcal{J}(S)$ of $S$ is a $z$-ideal.
		
\item If $S$ has a unique maximal ideal $\m$ and if $\ma\in \id$ such that $\ma\subsetneq \m$, then $\ma$ is not a $z$-ideal.
\end{enumerate}	
\end{proposition}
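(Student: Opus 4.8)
The plan is to treat the three parts in turn, each reducing to the defining property of a $z$-ideal---that $x \in \mz$ forces $\bigcap \mathcal{M}_x \subseteq \mz$---together with the fact (recorded in \textsection\ref{prlm}) that an intersection of ideals is again an ideal.

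For part (1) I would put $\mz = \bigcap_{i \in I} \mz_i$, which is an ideal. If $x \in \mz$, then $x \in \mz_i$ for every $i$, and since each $\mz_i$ is a $z$-ideal we obtain $\bigcap \mathcal{M}_x \subseteq \mz_i$ for all $i$; intersecting over $i$ gives $\bigcap \mathcal{M}_x \subseteq \mz$, which is precisely the $z$-ideal condition.

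For part (2), the cleanest route is first to note that every maximal ideal is itself a $z$-ideal: if $\m \in \mx$ and $x \in \m$, then $\m \in \mathcal{M}_x$, so $\bigcap \mathcal{M}_x \subseteq \m$. Since $\jr = \bigcap_{\m \in \mx} \m$ is an intersection of such ideals, part (1) immediately yields that $\jr$ is a $z$-ideal. One could instead argue directly, observing that for $x \in \jr$ one has $\mathcal{M}_x = \mx$ and hence $\bigcap \mathcal{M}_x = \jr$.

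For part (3) I would exploit the local structure. Since $\m$ is the only maximal ideal and $\ma \subseteq \m$, each $x \in \ma$ lies in $\m$, so $\m \in \mathcal{M}_x$; by uniqueness $\mathcal{M}_x = \{\m\}$ and therefore $\bigcap \mathcal{M}_x = \m$. Were $\ma$ a $z$-ideal, this would force $\m \subseteq \ma$, contradicting $\ma \subsetneq \m$, so $\ma$ cannot be a $z$-ideal. None of the three parts is a genuine obstacle---the argument is essentially bookkeeping with the definition---but part (3) is where a little care is warranted: I must ensure the chosen $x$ actually lies in some maximal ideal, so that $\bigcap \mathcal{M}_x$ equals $\m$ rather than the empty intersection $S$. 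This is automatic, since any $x \in \ma \subseteq \m$ belongs to $\m$, and $\ma$ is nonempty (it contains $0$), so one may even take $x = 0$.
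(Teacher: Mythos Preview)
Your proof is correct and follows essentially the same approach as the paper: parts (1) and (2) are identical to the paper's arguments, and for part (3) the paper invokes the equivalent characterization of Proposition~\ref{ald} (choosing $x\in\m\setminus\ma$ and $y\in\ma$ with $\mathcal{M}_x=\mathcal{M}_y=\{\m\}$) while you argue directly from the definition, but both rest on the same observation that $\bigcap\mathcal{M}_x=\m$ in the local case.
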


\begin{proof}
(1) Let $x\in \bigcap_{i\in I}\ma_i$. Then $x\in \ma_i$ for every $i\in I$. Since $\ma_i\in \z$, we have $\bigcap\mathcal{M}_x\subseteq \ma_i$ for every $i\in I$, and thus $\bigcap \mathcal{M}_x\subseteq \bigcap_{i\in I}\ma_i$.
	
(2) Since every maximal ideal is a $z$-ideal, the claim now follows from (\ref{ainz}).
	
(3) Since $\ma\subsetneq \m$, there exists $x\in \m$, but $x\notin \ma$. Suppose that $y\in \ma$. Since $\m$ is the only maximal ideal of $S$, we must have $\mathcal{M}_x=\mathcal{M}_y$ and $y\in \ma$, whereas $x\notin \ma$. This implies that $\ma$ is not a $z$-ideal of $S$.
\end{proof}

The next technical proposition will play a significant role later on. It generalizes \cite[Theorem 1.1]{Mas73}. Before we state it, we need a lemma. The proof of this lemma relies on Zorn's lemma and is identical to that of rings, and hence, we omit it.

\begin{lemma}\label{mds}
Let $S$ be a  semiring, $\ma$ be an ideal of $S$, and $T$ be a multiplicatively closed set of $S$ that is disjoint from $\ma$. Then, there exists an ideal $\mb$ of $S$ that is maximal with respect to containing $\ma$ and being disjoint from $T$. Moreover, every such ideal $\mb$ is prime.
\end{lemma}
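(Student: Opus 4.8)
The plan is to produce the maximal element by a routine application of Zorn's lemma and then to verify primality by the standard expand-and-absorb argument, checking at each stage that the absence of additive inverses causes no difficulty. Let $\Sigma$ denote the collection of all ideals $\mc$ of $\s$ satisfying $\ma\subseteq\mc$ and $\mc\cap T=\emptyset$, partially ordered by inclusion. This set is nonempty, since $\ma$ itself lies in $\Sigma$ by hypothesis.

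For the existence of a maximal element, I would take an arbitrary chain $\{\mc_\lambda\}$ in $\Sigma$ and form its union $\bigcup_\lambda\mc_\lambda$. Because the family is totally ordered, this union is again an ideal of $\s$; it plainly contains $\ma$, and it meets $T$ only if some $\mc_\lambda$ does, which is excluded. Hence the union lies in $\Sigma$ and is an upper bound for the chain, so Zorn's lemma yields a maximal element $\mb$ of $\Sigma$. Note that $\mb$ is proper, since $1\in T$ forces $1\notin\mb$.

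It then remains to prove that every such $\mb$ is prime. I would argue by contradiction: suppose $xy\in\mb$ while $x\notin\mb$ and $y\notin\mb$. Then each of $\mb+\s x$ and $\mb+\s y$ is an ideal strictly larger than $\mb$, so by maximality of $\mb$ neither belongs to $\Sigma$; that is, each meets $T$. Choose $t_1=b_1+s_1 x\in T$ and $t_2=b_2+s_2 y\in T$ with $b_1,b_2\in\mb$ and $s_1,s_2\in\s$. Multiplying and expanding by distributivity gives
\[
t_1 t_2 = b_1 b_2 + b_1 s_2 y + s_1 x b_2 + s_1 s_2\, xy.
\]
The single point to watch—and the place where a ring proof would be tempted to cancel—is the final step: instead of invoking subtraction, I observe that every summand already lies in $\mb$, the first three because one factor lies in $\mb$, and the last because $xy\in\mb$. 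Thus $t_1 t_2\in\mb\cap T$, contradicting $\mb\cap T=\emptyset$. Since no additive inverses are ever used, the whole argument transfers verbatim from the ring setting to semirings, which is exactly why the author is content to omit it; the only genuine obstacle is confirming this absorption observation, after which primality of $\mb$ follows at once.
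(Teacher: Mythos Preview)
Your proof is correct and is exactly the argument the paper has in mind: the paper omits the proof entirely, saying only that it ``relies on Zorn's lemma and is identical to that of rings,'' and what you have written is precisely that standard proof, together with the explicit check that no additive inverses are invoked so that the argument transfers to semirings without change.
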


\begin{proposition}
\label{mpz} If $\p$ is a prime ideal of a semiring $S$ which is minimal with
respect to the property to contain a given  $z$-ideal $\ma$ of $S$, then $\p\in \z$.
\end{proposition}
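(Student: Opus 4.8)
The plan is to verify the $z$-ideal condition through the alternative characterization of Proposition \ref{ald}: it suffices to show that whenever $a,b\in S$ satisfy $\mathcal{M}_a=\mathcal{M}_b$ and $b\in\p$, then $a\in\p$. The whole argument rests on two ingredients: a description of what the minimality of $\p$ over $\ma$ buys us, and the multiplicative behaviour of the assignment $x\mapsto\mathcal{M}_x$. For the latter I would first record the elementary identities $\mathcal{M}_{xy}=\mathcal{M}_x\cup\mathcal{M}_y$ and $\mathcal{M}_{x^n}=\mathcal{M}_x$ for $n\geq 1$, both immediate from the fact that every maximal ideal is prime (\cite[Corollary 7.13]{G99}).

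The key step, and the one I expect to be the main obstacle, is to extract from minimality the following: for every $b\in\p$ there exist $t\in S\setminus\p$ and $n\geq 1$ with $tb^n\in\ma$. To prove this I would fix $b\in\p$ and form the set $T=\{\,sb^n\mid s\in S\setminus\p,\ n\geq 0\,\}$, which is multiplicatively closed and contains $1$, since $\p$ is prime (so $S\setminus\p$ is multiplicatively closed) and proper. If $T$ were disjoint from $\ma$, then Lemma \ref{mds} would furnish a prime ideal $\q\supseteq\ma$ disjoint from $T$; since $S\setminus\p\subseteq T$ this forces $\q\subseteq\p$, while $b\in T$ gives $b\notin\q$, so $\q\subsetneq\p$, contradicting the minimality of $\p$ over $\ma$. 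Hence $T\cap\ma\neq\varnothing$, and since the case $n=0$ would put $t\in\ma\subseteq\p$, the exponent must satisfy $n\geq 1$.

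Finally I would assemble the pieces. Given $\mathcal{M}_a=\mathcal{M}_b$ and $b\in\p$, I pick $t\notin\p$ and $n\geq 1$ with $tb^n\in\ma$. Using the identities above, $\mathcal{M}_{ta^n}=\mathcal{M}_t\cup\mathcal{M}_a=\mathcal{M}_t\cup\mathcal{M}_b=\mathcal{M}_{tb^n}$. Since $\ma$ is a $z$-ideal and $tb^n\in\ma$, Proposition \ref{ald} yields $ta^n\in\ma\subseteq\p$. As $\p$ is prime and $t\notin\p$, we obtain $a^n\in\p$, and primeness once more gives $a\in\p$, completing the verification that $\p\in\z$. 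The only delicate point is the minimality lemma; the rest is a formal manipulation, and one could equally invoke the relaxed version from the Remark, since the construction naturally produces the containment $\mathcal{M}_{ta^n}\supseteq\mathcal{M}_{tb^n}$.
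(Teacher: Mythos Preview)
Your proof is correct and follows essentially the same route as the paper's: the paper defers to \cite[Theorem 1.1]{Mas73}, invoking Lemma \ref{mds}, and Mason's argument is precisely the one you have written out---extract from minimality the existence of $t\notin\p$ and $n\geq 1$ with $tb^n\in\ma$ via the multiplicatively closed set $T=\{sb^n\mid s\in S\setminus\p,\ n\geq 0\}$, then use the $z$-ideal property of $\ma$ together with $\mathcal{M}_{ta^n}=\mathcal{M}_{tb^n}$ to conclude $ta^n\in\ma\subseteq\p$ and hence $a\in\p$. Your exposition simply makes explicit what the paper leaves to the reference.
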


\begin{proof}
Applying Lemma \ref{mds}, the proof is the same as given in \cite[Theorem 1.1]{Mas73} for rings.
\end{proof}

Our next goal is to determine when $z$-ideals of semirings are closed under finite products. We say an ideal of the form $\mathsf{m}_x:=\bigcap \mathcal{M}_x$ ($x\in S$) of a semiring $S$ a \emph{basic $z$-ideal}; $\mathsf{m}_x$ indeed a $z$-ideal follows from Proposition \ref{epzi}(\ref{ainz}) and the fact that every maximal ideal ideal is a $z$-ideal.

\begin{theorem}\label{prdz}
The product of two $z$-ideals of a semiring $S$ is a $z$-ideal if and only if every basic $z$-ideal of $S$ is idempotent.
\end{theorem}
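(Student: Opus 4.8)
The plan is to work through the element-wise characterization of $z$-ideals in Proposition \ref{ald}, supported by two elementary facts that I would record first. The first is that $x\in\mathsf{m}_x$ and that $\mathsf{m}_x$ is the \emph{smallest} $z$-ideal containing $x$: indeed, if $\mz$ is any $z$-ideal with $x\in\mz$, then the defining property of a $z$-ideal forces $\mathsf{m}_x=\bigcap\mathcal{M}_x\subseteq\mz$. The second is that $\mathcal{M}_{x^2}=\mathcal{M}_x$ for every $x\in S$, which holds because each $\m\in\mx$ is prime, so $x^2\in\m$ if and only if $x\in\m$. I would also note the standing inclusion $\mathsf{m}_x^2=\mathsf{m}_x\mathsf{m}_x\subseteq\mathsf{m}_x$ coming from Lemma \ref{psi}.

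For the forward direction, I would assume the product of any two $z$-ideals is a $z$-ideal and fix a basic $z$-ideal $\mathsf{m}_x$. Since $\mathsf{m}_x$ is a $z$-ideal, so is $\mathsf{m}_x^2$ by hypothesis. From $x\in\mathsf{m}_x$ I get $x^2\in\mathsf{m}_x^2$, and since $\mathcal{M}_x=\mathcal{M}_{x^2}$ while $\mathsf{m}_x^2$ is a $z$-ideal, Proposition \ref{ald} applied with $a=x$ and $b=x^2$ yields $x\in\mathsf{m}_x^2$. As $\mathsf{m}_x$ is the smallest $z$-ideal containing $x$ and $\mathsf{m}_x^2$ is a $z$-ideal containing $x$, I conclude $\mathsf{m}_x\subseteq\mathsf{m}_x^2$; with the reverse inclusion recorded above this gives $\mathsf{m}_x^2=\mathsf{m}_x$, so $\mathsf{m}_x$ is idempotent.

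For the converse, I would assume every basic $z$-ideal is idempotent and let $\mz_1,\mz_2$ be $z$-ideals, proving the sharper statement $\mz_1\mz_2=\mz_1\cap\mz_2$; the result then follows because $\mz_1\cap\mz_2$ is a $z$-ideal by Proposition \ref{epzi}(\ref{ainz}). The inclusion $\mz_1\mz_2\subseteq\mz_1\cap\mz_2$ is Lemma \ref{psi}. For the reverse, take $a\in\mz_1\cap\mz_2$; since $\mz_1$ and $\mz_2$ are $z$-ideals containing $a$, the minimality of $\mathsf{m}_a$ gives $\mathsf{m}_a\subseteq\mz_1$ and $\mathsf{m}_a\subseteq\mz_2$, so by idempotency and the fact that products respect inclusions, $a\in\mathsf{m}_a=\mathsf{m}_a\mathsf{m}_a\subseteq\mz_1\mz_2$. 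Hence $\mz_1\cap\mz_2\subseteq\mz_1\mz_2$ and equality holds.

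The conceptual crux, and the one place needing genuine care, is the forward direction: the hypothesis only supplies that products of $z$-ideals are $z$-ideals, so to extract idempotency I must manufacture an element, namely $x^2$, lying in $\mathsf{m}_x^2$ whose set of containing maximal ideals $\mathcal{M}_{x^2}$ coincides with $\mathcal{M}_x$, and then invoke the $z$-ideal property of $\mathsf{m}_x^2$ to pull $x$ itself back in. Everything else reduces to the two bookkeeping facts isolated at the start; in particular, recognizing that $\mathsf{m}_x$ is the minimal $z$-ideal through $x$ is precisely what upgrades the single membership $x\in\mathsf{m}_x^2$ to the full inclusion $\mathsf{m}_x\subseteq\mathsf{m}_x^2$.
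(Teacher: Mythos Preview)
Your proof is correct and follows essentially the same line as the paper's. The forward direction is identical in substance: both arguments use $x^2\in\mathsf{m}_x^2$, the equality $\mathcal{M}_{x^2}=\mathcal{M}_x$, and the fact that $\mathsf{m}_x^2$ is a $z$-ideal to obtain $\mathsf{m}_x=\mathsf{m}_{x^2}\subseteq\mathsf{m}_x^2\subseteq\mathsf{m}_x$.

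For the converse, the core computation is the same in both proofs---namely $\mathsf{m}_a=\mathsf{m}_a\mathsf{m}_a\subseteq\mz_1\mz_2$ whenever $a$ lies in both $\mz_1$ and $\mz_2$---but you package it a bit differently. The paper starts from $y\in\ma\mb$ (hence $y\in\ma\cap\mb$), deduces $\mathsf{m}_y\subseteq\ma\mb$, and concludes directly that $\ma\mb$ is a $z$-ideal by definition. You instead start from $a\in\mz_1\cap\mz_2$, deduce $a\in\mz_1\mz_2$, and thereby establish the \emph{stronger} equality $\mz_1\mz_2=\mz_1\cap\mz_2$; the $z$-ideal property then falls out of Proposition~\ref{epzi}(\ref{ainz}). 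Your route thus buys an extra structural fact (product equals intersection for $z$-ideals in a \emph{bzi}-semiring) at no additional cost.
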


\begin{proof}
Let us suppose that the product of two $z$-ideals of $S$ is a $z$-ideal. Let $x\in S$. We need to show that $\mathsf{m}_x^2=\mathsf{m}_x$. It is clear that $\mathsf{m}_{x^2}=\mathsf{m}_x$. Since $x\in \mathsf{m}_x$, we have $x^2\in \mathsf{m}_x^2$. Since $\mathsf{m}_x$ is a $z$-ideal, by assumption, $\mathsf{m}_x^2$ is also a $z$-ideal. Moreover, $\mathsf{m}_{x^2}\subseteq \mathsf{m}_x^2.$ From these, we obtain
\[ \mathsf{m}_x=\mathsf{m}_{x^2}\subseteq \mathsf{m}_x^2\subseteq \mathsf{m}_x,\]
implying that $\mathsf{m}_x^2=\mathsf{m}_x.$ For the converse, suppose that every basic $z$-ideal of $S$ is idempotent. Let us consider two $z$-ideals $\ma$ and $\mb$ of $S$. Let $y\in \ma\mb$ for some $y\in S$. Then $\mathsf{m}_y\subseteq \ma$ and $\mathsf{m}_y\subseteq \mb$. Applying the hypothesis, 
\[\mathsf{m}_y=\mathsf{m}_y\mathsf{m}_y\subseteq \ma\mb,\]
proving that $\ma\mb$ is a $z$-ideal of $S$.
\end{proof}

\begin{definition}
If every basic $z$-ideal of a semiring $S$ is idempotent, then we say $S$ is a \emph{bzi-semiring}.
\end{definition}

As an example, notice that every idempotent semiring is a \emph{bzi}-semiring. It is easy to see that not every semiring is a  \emph{bzi}-semiring. For example, consider the ring $\mathds{Z}$ of integers (and hence a semiring). The $z$-ideal $\mathsf{m}_3$ is nothing but $3\mathds{Z}$, however, $3\mathds{Z}\cdot 3\mathds{Z}=9\mathds{Z}\neq 3\mathds{Z}$.

\begin{example}
\label{exm}~
Now, it may be appropriate to consider some examples of 
$z$-ideals in semirings.
	
(1)\label{exm1} Every maximal ideal of a semiring $S$ is a $z$-ideal. Indeed: for $\m\in \mx$, $\mathcal{M}_a=\mathcal{M}_b$, and $a\in \m$ implies that $\m\in \mathcal{M}_a=\mathcal{M}_b$. Therefore, $b\in M$, and hence $\m$ is a $z$-ideal.
	
(2) Consider the semiring $\mathcal{C}\left(X, \mathds{I}\right)$ of all continuous functions defined on a topological space $X$ and taking the values in a topological semiring $\mathds{I}=\left([0,1], \cdot, \vee\right )$, with pointwise operations: addition $\vee$ and multiplication $\cdot$
of functions. Then for every prime ideal $\p$ of $\mathcal{C}\left(X, \mathds{I}\right)$, \[ \mathcal{O}_{\p}:=\left\{
f\in \mathcal{C}(X, \mathds{I})\mid fg=0,\;\text{for some}\; g\in \mathcal{C}(X, \mathds{I})\setminus \p \right\}\]
is a $z$-ideal of $\mathcal{C} \left(X, \mathds{I}\right)$ (see \cite[\textsection 4]{VMS19}). Moreover, every minimal prime ideal of $\mathcal{C}\left(X, \mathds{I}\right)$ is a $z$-ideal (see \cite[Proposition 4.7]{VMS19}).
	
(3) If $S$ is a semisimple semiring, then $0$ is a $z$-ideal of $S$ (see Lemma \ref{lclk}(\ref{ssz})).
	
(4) The minimal prime ideals of a semiring are $z$-ideals (see Proposition \ref{mpz}).
\end{example}

By employing ideal quotients, we derive additional examples of $z$-ideals of semirings. This result is presented in the following proposition, which serves as a generalization of \cite[Proposition 1.3]{Mas73}, along with its corollary.

\begin{proposition}\label{icj}
If $\ma$ is a $z$-ideal and $\mb$ is an ideal of a semiring $S $, then $(\ma:\mb)$ is a $z$-ideal of $S$.
\end{proposition}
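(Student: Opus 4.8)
The plan is to use the alternative characterization of a $z$-ideal from Proposition \ref{ald}, together with the multiplicative behaviour of the sets $\mathcal{M}_a$. First I would record the routine fact that $(\ma:\mb)$ is genuinely an ideal of $S$: closure under sums and under multiplication by arbitrary elements of $S$ both follow at once from the definition $(\ma:\mb)=\{s\in S\mid s\mb\subseteq\ma\}$ and the fact that $\ma$ is an ideal. This part carries no content and I would dispatch it in a sentence.

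The engine of the argument is the identity $\mathcal{M}_{xy}=\mathcal{M}_x\cup\mathcal{M}_y$ for all $x,y\in S$, which already appeared (in the form $\mathcal{M}_x=\mathcal{M}_{ax}$) inside the proof of Proposition \ref{ald}. It holds precisely because every maximal ideal is prime: a maximal ideal $\m$ contains $xy$ if and only if it contains $x$ or $y$, so the maximal ideals through $xy$ are exactly those through $x$ together with those through $y$.

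With this in hand, suppose $x,y\in S$ satisfy $\mathcal{M}_x=\mathcal{M}_y$ and $y\in(\ma:\mb)$; by Proposition \ref{ald} it then suffices to deduce $x\in(\ma:\mb)$. I would fix an arbitrary $b\in\mb$. Since $y\in(\ma:\mb)$, we have $yb\in\ma$. Applying the identity above to each product gives $\mathcal{M}_{xb}=\mathcal{M}_x\cup\mathcal{M}_b=\mathcal{M}_y\cup\mathcal{M}_b=\mathcal{M}_{yb}$. Because $\ma$ is a $z$-ideal and $yb\in\ma$ with $\mathcal{M}_{xb}=\mathcal{M}_{yb}$, Proposition \ref{ald} now forces $xb\in\ma$. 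As $b\in\mb$ was arbitrary, this shows $x\mb\subseteq\ma$, that is, $x\in(\ma:\mb)$, which completes the verification via Proposition \ref{ald}.

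I expect no serious obstacle here; the one point not to overlook is that the set-theoretic identity $\mathcal{M}_{xy}=\mathcal{M}_x\cup\mathcal{M}_y$ genuinely relies on the primality of maximal ideals, so the argument is tied to the lattice of maximal ideals rather than to arbitrary ideals of $S$. Beyond that, the proof is a direct transcription of the ring-theoretic argument of \cite[Proposition 1.3]{Mas73}, made legitimate in the semiring setting by the characterization in Proposition \ref{ald}.
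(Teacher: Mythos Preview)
Your proof is correct and follows essentially the same route as the paper's: both arguments reduce to the multiplicative identity $\mathcal{M}_{xs}=\mathcal{M}_x\cup\mathcal{M}_s$ and then invoke the $z$-ideal property of $\ma$ elementwise over $s\in\mb$. The only cosmetic difference is that the paper uses the relaxed containment criterion $\mathcal{M}_x\supseteq\mathcal{M}_y$ from the Remark following Proposition~\ref{ald}, whereas you use the equality version from Proposition~\ref{ald} itself; this changes nothing of substance.
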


\begin{proof}
Suppose that $\mathcal{M}_x\supseteq \mathcal{M}_y$ and $y\in (\ma : \mb).$ This implies $\mathcal{M}_{xs}\supseteq \mathcal{M}_{ys}$ and $ys\in \ma$ for all $s\in \mb$. Since $\ma$ is a $z$-ideal, we have $xs\in \ma$ for all $s\in \mb$. Hence $x\in (\ma : \mb).$
\end{proof}

\begin{corollary}
Suppose that  $\mb$, $\{\mb_{i}\}_{i \in I}$, $\mc$ are ideals and  $\ma$, $\{\ma_{j}\}_{j \in \mb}$ are $z$-ideals of a semiring $S$. Then  $ (\ma : \mb),$  $((\ma : \mb) : \mc),$ $(\ma : \mb \mc),$ $((\ma : \mc) : \mb),$ $ (\bigcap_{j} \ma_{j} : \mb),$ $ \bigcap_{j}(\ma_{j}: \mb),$ $ (\ma : \sum_{i}\mb_{i}),$ and $ \bigcap_{i}(\ma : \mb_{i})$ are all $z$-ideals of $S$.
\end{corollary}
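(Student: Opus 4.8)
The plan is to observe that every one of the eight ideals listed is obtained from a $z$-ideal by finitely many applications of two operations, each of which is already known to preserve the property of being a $z$-ideal: forming the colon ideal $(\,\cdot\,:\mb)$ against an arbitrary ideal $\mb$ (Proposition \ref{icj}), and forming intersections of families of $z$-ideals (Proposition \ref{epzi}(\ref{ainz})). Alongside these I would use only the elementary closure facts recorded in \textsection\ref{prlm}, namely that the product $\mb\mc$ of two ideals and the sum $\sum_i\mb_i$ of a family of ideals are again ideals.

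First, the cases $(\ma:\mb)$, $(\ma:\mb\mc)$, and $(\ma:\sum_i\mb_i)$ are immediate: in each the first argument is the $z$-ideal $\ma$ while the second argument (respectively $\mb$, the product $\mb\mc$, or the sum $\sum_i\mb_i$) is an ordinary ideal, so Proposition \ref{icj} applies directly. Next, for the iterated colons $((\ma:\mb):\mc)$ and $((\ma:\mc):\mb)$ I would argue in two steps: by Proposition \ref{icj} the inner colon is itself a $z$-ideal, and treating this $z$-ideal as the new first argument and applying Proposition \ref{icj} once more (with second argument $\mc$, respectively $\mb$) yields a $z$-ideal.

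Finally, the four remaining expressions mix the two closure operations. For $(\bigcap_j\ma_j:\mb)$, Proposition \ref{epzi}(\ref{ainz}) first gives that $\bigcap_j\ma_j$ is a $z$-ideal, and then Proposition \ref{icj} gives that its colon by $\mb$ is a $z$-ideal. For $\bigcap_j(\ma_j:\mb)$ and $\bigcap_i(\ma:\mb_i)$ the order is reversed: each colon $(\ma_j:\mb)$, respectively $(\ma:\mb_i)$, is a $z$-ideal by Proposition \ref{icj}, whence their intersection is a $z$-ideal by Proposition \ref{epzi}(\ref{ainz}).

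Because the argument is purely a matter of chaining these two structural results, there is no genuine obstacle; the only point requiring care is the bookkeeping, that is, verifying at each stage that the object fed into Proposition \ref{icj} really is a $z$-ideal (and not merely an ideal) before the colon is formed. I would also note in passing that several of these ideals in fact coincide, for instance $(\ma:\mb\mc)=((\ma:\mb):\mc)$ and $(\ma:\sum_i\mb_i)=\bigcap_i(\ma:\mb_i)$, so the list could be shortened; but proving each directly as above has the advantage of not relying on these identities.
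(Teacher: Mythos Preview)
Your proposal is correct and matches the paper's approach exactly: the paper states this corollary immediately after Proposition~\ref{icj} with no proof at all, so the intended argument is precisely the one you give---repeated application of Proposition~\ref{icj} together with Proposition~\ref{epzi}(\ref{ainz}) on intersections. One trivial slip: you write ``the four remaining expressions'' in the last paragraph, but only three remain (and you correctly handle all three).
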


It is worth noting, as established in \cite{SA92}, that the concept of $k$-ideals, also known as subtractive ideals, in semirings can be defined and studied using closure operations. We adopt a similar approach when dealing with $z$-ideals.

\begin{definition}\label{defcl}
Let $S$ be a semiring.  The \emph{$z$-closure} operation on $\id $  is defined by
\begin{equation}
\label{clkdef}
\cz(\ma):=\bigcap_{\mz \in \z}\left\{\mz \mid \mz\supseteq \ma\right\}.
\end{equation} 	
\end{definition}

\begin{remark}
What we refer to as the $z$-closure operator $\cz$(-) was originally introduced as $I_z$ in \cite[p. 281]{Mas73} to describe the smallest $z$-ideal containing the ideal $I$. Our primary objective is to utilize this closure operator $\cz$(-) to establish properties of various distinguished types of $z$-ideals (see \textsection \ref{sczi}). It is worth noting that the $z$-closure can also be expressed as $\cz(\ma)=\mathcal{I}\mathcal{V}(\ma),$ where $\mathcal{V}(\ma):=\left\{ \mz \in \z \mid \mz \supseteq \ma\right\}$ and $\mathcal{I}(X):=\bigcap X$ $(X\subseteq S)$.  This formulation, denoted as $\mathcal{I}\mathcal{V}$(-), stands in contrast to $\mathcal{V}\mathcal{I}$(-), which represents the closed sets of a Zariski topology defined on the prime spectrum of a (commutative) ring.
\end{remark}

The next lemma gathers some of the elementary properties of $z$-closure operations. Among them, (\ref{iclk}), (\ref{ckr}), (\ref{rzi}), and (\ref{czan}) generalize results in \cite[p.\,281]{Mas73}; (\ref{cabcc}) generalizes \cite[Proposition 3.1(a)]{Mas80} and \cite[Lemma 2.2(1)]{Ben21}; (\ref{arbin}) generalizes \cite[Lemma 1.3(c)]{AAT13} and \cite[Lemma 2.2(2)]{Ben21}; (\ref{rzi})--(\ref{sca})  generalize commutative versions of  \cite[Proposition 3.7, Proposition 3.9]{MJ20}.

\begin{lemma}\label{lclk}
Let $S$ be a semiring and let  $\ma$, $\{\ma_{i}\}_{i \in I}$, and $\mb$ be ideals of $S $. Then the following hold.
\begin{enumerate}
		
\item\label{iclk} $\cz(\ma)$ is the smallest $z$-ideal containing $\ma$.

\item \label{ckr}
$ \cz(S) =S .$
		
\item \label{ssz}
If $S$ is semisimple, then $\cz(0)=0.$
		
\item\label{clcl} $\cz(\cz(\ma))=\cz(\ma).$
		
\item\label{ijcl} If $\ma\subseteq \mb$, then $\cz(\ma)\subseteq \cz(\mb).$
		
\item \label{clu}
$\cz(\langle \ma\cup \mb\rangle )\supseteq \cz(\ma) \cup \cz(\mb).$

\item\label{cabcc} $\cz(\ma+\mb)=\cz(\cz(\ma)+\cz(\mb)).$
		
\item\label{altd} $\ma$  is a $z$-ideal if and only if $\ma=\cz(\ma).$
		
\item\label{rzi} $\sqrt{\ma}\subseteq \cz(\ma).$

\item\label{czsq} $\cz(\sqrt{\ma})=\cz(\ma)$.

\item If $\ma$ is a $z$-ideal, then $\cz(\sqrt{\ma})=\ma$.

\item\label{sca} $\sqrt{\cz(\ma)}=\cz(\sqrt{\ma})$.

\item\label{arbin} $\cz(\ma \mb)=\cz\left(\ma\cap \mb\right)=  \cz (\ma)\cap \cz(\mb).$

\item $\cz(\ma\mb)=\cz(\ma\cz(\mb))=\cz(\cz(\ma)\cz(\mb)).$

\item\label{czan} $\cz(\ma^n)=\cz(\ma)$, for any $n\in \mathds{N}_{>0}$.

\item\label{clijk} If $S$ is a bzi-semiring, then $\cz(\ma\mb)= \cz(\ma)\cz(\mb).$
\end{enumerate}
\end{lemma}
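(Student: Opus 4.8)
The plan is to prove the equality by two inclusions and to isolate exactly where the bzi-hypothesis is used.

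First I would establish $\cz(\ma)\cz(\mb)\subseteq\cz(\ma\mb)$, which in fact holds in \emph{any} semiring. By Lemma~\ref{psi}, the product of the two ideals $\cz(\ma)$ and $\cz(\mb)$ is contained in their intersection, so $\cz(\ma)\cz(\mb)\subseteq\cz(\ma)\cap\cz(\mb)$. Part~(\ref{arbin}) already identifies $\cz(\ma)\cap\cz(\mb)$ with $\cz(\ma\mb)$, so this inclusion follows at once; notably it makes no use of the bzi-assumption.

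For the reverse inclusion $\cz(\ma\mb)\subseteq\cz(\ma)\cz(\mb)$, I would first note that $\ma\mb\subseteq\cz(\ma)\cz(\mb)$: since $\ma\subseteq\cz(\ma)$ and $\mb\subseteq\cz(\mb)$, every generator $ab$ (with $a\in\ma$ and $b\in\mb$) of the product ideal $\ma\mb$ lies in $\cz(\ma)\cz(\mb)$, and the latter, being an ideal, therefore contains all of $\ma\mb$. The key step is that $\cz(\ma)\cz(\mb)$ is itself a $z$-ideal: by part~(\ref{iclk}) both $\cz(\ma)$ and $\cz(\mb)$ are $z$-ideals, and in a bzi-semiring Theorem~\ref{prdz} guarantees that the product of two $z$-ideals is again a $z$-ideal. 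Since $\cz(\ma\mb)$ is the smallest $z$-ideal containing $\ma\mb$ (again part~(\ref{iclk})), and $\cz(\ma)\cz(\mb)$ is a $z$-ideal containing $\ma\mb$, minimality yields $\cz(\ma\mb)\subseteq\cz(\ma)\cz(\mb)$.

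Combining the two inclusions gives the claimed equality. The argument is essentially formal, assembled from Lemma~\ref{psi} together with parts~(\ref{iclk}) and~(\ref{arbin}); the sole place the hypothesis enters is in guaranteeing that $\cz(\ma)\cz(\mb)$ is a $z$-ideal, which is exactly the content of Theorem~\ref{prdz} under the bzi-assumption. I expect that single step to be the only nontrivial point of the proof.
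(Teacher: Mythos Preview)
Your argument is correct and follows essentially the same route as the paper: establish $\ma\mb\subseteq\cz(\ma)\cz(\mb)$, invoke Theorem~\ref{prdz} under the bzi-hypothesis to see that $\cz(\ma)\cz(\mb)$ is a $z$-ideal, and conclude $\cz(\ma\mb)\subseteq\cz(\ma)\cz(\mb)$ by minimality. The one minor difference is in the reverse inclusion: you obtain $\cz(\ma)\cz(\mb)\subseteq\cz(\ma\mb)$ directly from Lemma~\ref{psi} together with the already-proved identity in part~(\ref{arbin}), whereas the paper instead phrases it as showing $\cz(\ma)\cz(\mb)$ is the \emph{smallest} $z$-ideal containing $\ma\mb$ and points to repeating the minimal-prime argument used for~(\ref{arbin}). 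Your shortcut is cleaner, since once~(\ref{arbin}) is in hand there is no need to rerun that argument.
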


\begin{proof}
(1) From the Definition \ref{clkdef}, it is clear that $\ma \subseteq \cz(\ma).$ Suppose that $\{\mz_{i}\}_{i\in I}$ is a family of $z$-ideals of $S$. Let $x\in \bigcap_{i\in I}\mz_{i}$. Then $x\in \mz_{i}$ for each $i \in I$. This implies that $\bigcap \mathcal{M}_{x,i}\subseteq \mz_{i}$, for each $i \in I$, and hence $\bigcap_{i\in I} \mathcal{M}_{x,i}\subseteq \bigcap_{i\in I} \mz_{i}$, proving that $\bigcap_{i\in I}\mz_{i}$ is also a $z$-ideal. If $\mz'$ is a $z$-ideal of $S$ such that $\ma\subseteq \mz'$, then obviously \[\mz'\supseteq \bigcap_{\mz \in \z} \left\{ \mz\mid \mz\supseteq \ma\right\}.\] This proves that $\cz(\ma)$ is the smallest $z$-ideal containing $\ma$. 

(2) Straightforward.

(3) It is sufficient to show that if $S$ is semisimple, then $0$ is a $z$-ideal. Suppose that $\mathcal{M}_a=\mathcal{M}_b$ and $a\in 0$. Then $a=0$ and therefore $\mathcal{M}_a=\mathcal{M}_0=\mx$, implying $\mathcal{M}_b=\mx$. Hence, $b\in \bigcap \mx=0$, that is, $b=0$. By Proposition \ref{ald}, this implies that $0$ is a $z$-ideal.

(4) By (\ref{iclk}), it follows that $\cz(\ma)\subseteq \cz(\cz(\ma)).$ Conversely, let $x\in \cz(\cz(\ma)).$ Then \[x\in \bigcap_{\mz\in \z} \left\{ \mz\mid \cz(\ma)\subseteq \mz\right\},\] and this implies that $x\in \mz$ for all $\mz$ with the property: $\cz(\ma)\subseteq \mz$. From this, we obtain \[x\in \bigcap_{\mz\in \z} \left\{ \mz\mid \ma\subseteq \mz\right\}=\cz(\ma).\]

(5) Straightforward.

(6) Follows from (\ref{ijcl}).

(7) Observe that $\ma+\mb\subseteq \cz(\ma+\mb)$ and  $\ma+\mb\subseteq \cz(\cz(\ma)+\cz(\mb)).$ Since by (\ref{iclk}), $\cz(\ma+\mb)$ is the smallest $z$-ideal containing $\ma+\mb$, we must have \[\cz(\ma+\mb)\subseteq \cz(\cz(\ma)+\cz(\mb)).\] Conversely, $\cz(\ma+\mb)\supseteq \ma,$ $\mb;$ and hence, by (\ref{clcl}) we have $\cz(\ma+\mb)\supseteq \cz(\ma)+ \cz(\mb).$ Applying (\ref{clcl}) and (\ref{ijcl}) in the last containment, we obtain the desired claim.

(8) If $\ma=\cz(\ma)$, then by (\ref{iclk}), $\mz$ is a $z$-ideal. Conversely, if $\ma$ is a $z$-ideal, then by Definition \ref{clkdef}, we have the equality.

(9) Suppose that $x\in \sqrt{\ma}$ and $\ma\subseteq \mz$ for some $\mz\in \z$. This implies $x^n\in I\subseteq \mz$, for some $n\in \mathds{N}_{>0}.$ Since $\mz$ is a $z$-ideal, this means that \[x\in \mathcal{M}_x=\mathcal{M}_{x^n}\subseteq \mz.\] Hence, $x\in \cz(\ma).$

(10) Since $\ma\subseteq \sqrt{\ma},$ by (\ref{ijcl}) we have $\cz(\sqrt{\ma})\supseteq \cz(\ma).$ The other half follows from (\ref{rzi}) and (\ref{ijcl}).

(11) Follows from (\ref{czsq}) and (\ref{altd}).

(12) Since $\cz(\ma)$ is a $z$-ideal, by applying (\ref{rzi}), we obtain \[ \cz(\ma)\subseteq \sqrt{\cz(\ma)}\subseteq \cz(\cz(\ma))=\cz(\ma).\] Hence $\sqrt{\cz(\ma)}=\cz(\ma).$ By (\ref{czsq}), we then have the desired equality.

(13) We show $\cz(\ma\mb)=\cz(\ma)\cap \cz(\mb)$, and from that we have the other equality for free. Since  $\ma\mb\subseteq \ma$, $\ma\mb\subseteq  \mb$, we have \[\cz(\ma\mb)\subseteq  \cz(\ma\cap \mb)\subseteq  \cz(\ma)\cap \cz(\mb)\in \z.\]
Therefore, to obtain the desired identities, it is sufficient to show that $\cz(\ma)\cap \cz(\mb)$ is the smallest $z$-ideal containing $\ma\mb$. Let us suppose that $\ma\mb \subseteq \mz$ for some $\mz \in \z$. Let $\mathrm{Min}_{\mz}(S)$ be the set of all minimal prime ideals of $S$ containing $\mz$. If $\p \in \mathrm{Min}_{\mz}(S)$, then $\p \in \z$ by Proposition \ref{mpz}. Since
\[ \sqrt{\mz}=\sqrt{\cz(\mz)}=\cz(\sqrt{\mz})=\sqrt{\cz(\mz)}\subseteq \cz(\cz(\mz))=\cz(\mz)=\mz,\]
by Proposition \ref{epzi}(\ref{ainz}) we have 
\[ \mz = \bigcap_{\p \in \mathrm{Min}_{\mz}(S)} \p.\]
Here we have used the property that the radical of an ideal in a semiring can be represented as the intersection of minimal prime ideals containing it. The proof of this fact is identical to that of rings.
Since for each $\p \in \mathrm{Min}_{\mz}(S)$, we have $\ma \mb \subseteq \p$; from this, we infer that $\ma \subseteq \p$ or $\mb \subseteq \p$. It follows that $\cz(\ma)\cap \cz(\mb)\subseteq \p.$ Hence $\cz(\ma)\cap \cz(\mb)\subseteq \mz,$ that is, $\cz(\ma)\cap \cz(\mb)$ is the smallest $z$-ideal containing $\ma\mb$.

(14) From the following string of relations:
\begin{align*}
\cz(\ma\mb)&\subseteq \cz(\ma\cz(\mb))\\&\subseteq \cz(\cz(\ma)\cz(\mb))\\&=\cz(\cz(\ma)\cap \cz(\cz(\mb)))\\&=\cz(\ma)\cap \cz(\mb)\\&=\cz(\ma\mb),
\end{align*}
we have the two equalities.

(15) Since $\ma^n\subseteq \ma$, for any $n\in \mathds{N}$, by (4), we have $\cz(\ma^n)\subseteq \cz(\ma).$ For the converse, suppose that $x\in \ma$. Then $x^n\in \ma^n$, and $\ma^n\subseteq \cz(\ma^n)$ by (1). Since  $\cz(\ma^n)$ is a $z$-ideal, this implies that $\bigcap \mathcal{M}_{x^n}\subseteq \cz(\ma^n).$ Thus
\[ x\in \bigcap \mathcal{M}_{x}=\bigcap \mathcal{M}_{x^n}\subseteq \cz(\ma^n).\]
Hence $\ma\subseteq \cz(\ma^n),$ and applying (\ref{clcl}) gives the desired containment. 

(16)  By (\ref{iclk}), we have $\ma\subseteq \cz(\ma)$ and $\mb\subseteq \cz(\mb)$, and hence $\ma\mb\subseteq \cz(\ma)\cz(\mb)$. Since $S$ is a \emph{bzi}-semiring, by Theorem \ref{prdz}, we have $\cz(\ma)\cz(\mb)\in \z$. Therefore, it is now sufficient to show that $\cz(\ma)\cz(\mb)$ is the smallest $z$-ideal that contains $\ma\mb$. The rest of the argument in the proof is similar to (\ref{arbin}).
\end{proof}

So far, we have discussed the closedness of $z$-ideals in semirings under intersections and products. The following proposition provides equivalent criteria for the closedness of $z$-ideals under sums. This result extends \cite[Proposition 3.1(b)]{Mas80}, and its proof is identical to the one given there.

\begin{proposition}\label{cujo}
In a semiring $S$, the following are equivalent.
\begin{enumerate}
\item\label{cuj} If $\ma$, $\mb\in \z$, then $\ma+ \mb \in \z$.
		
\item If $\ma$, $\mb\in \id$, then $\cz(\ma+\mb)=\cz(\ma)+ \cz(\mb).$
		
\item If $\{\ma_i\}_{i\in \Lambda}\subseteq \z$, then $\sum_{i\in \Lambda} \ma_i\in \z.$
		
\item If $\{\ma_i\}_{i\in \Lambda}\subseteq \id$, then $\cz\left( \sum_{i\in \Lambda} \ma_i\right)=\sum_{i\in \Lambda} \cz(\ma_i)$.
\end{enumerate}
\end{proposition}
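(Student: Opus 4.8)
The plan is to prove the four statements equivalent through the single cycle $(1)\Rightarrow(3)\Rightarrow(4)\Rightarrow(2)\Rightarrow(1)$. The step $(4)\Rightarrow(2)$ is immediate, since $(2)$ is merely $(4)$ specialized to the two-element index set $\Lambda=\{1,2\}$ with $\ma_1=\ma$ and $\ma_2=\mb$. The remaining implications separate into one genuinely combinatorial passage from finite to arbitrary sums, $(1)\Rightarrow(3)$, and two routine manipulations of the closure operator, namely $(3)\Rightarrow(4)$ and $(2)\Rightarrow(1)$.

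For $(1)\Rightarrow(3)$ I would first promote $(1)$ to all finite sums by induction, so that $\ma_{i_1}+\cdots+\ma_{i_n}\in\z$ whenever each $\ma_{i_j}\in\z$. Given an arbitrary family $\{\ma_i\}_{i\in\Lambda}\subseteq\z$, I would then write $\sum_{i\in\Lambda}\ma_i$ as the union of its finite subsums $\mb_F:=\sum_{i\in F}\ma_i$ taken over finite $F\subseteq\Lambda$; this union is directed by inclusion, because $\mb_{F_1},\mb_{F_2}\subseteq\mb_{F_1\cup F_2}$, and each $\mb_F$ is a $z$-ideal by the induction. The crux is the observation that a directed union of $z$-ideals is again a $z$-ideal: if $\mathcal{M}_a=\mathcal{M}_b$ and $b\in\bigcup_F\mb_F$, then $b\in\mb_F$ for some $F$, whence $a\in\mb_F\subseteq\bigcup_F\mb_F$ because $\mb_F\in\z$. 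Proposition \ref{ald} then yields $\sum_{i\in\Lambda}\ma_i\in\z$.

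The two closure manipulations are direct consequences of Lemma \ref{lclk}. For $(3)\Rightarrow(4)$, monotonicity (Lemma \ref{lclk}(\ref{ijcl})) gives $\cz(\ma_i)\subseteq\cz(\sum_{i}\ma_i)$ for each $i$, hence $\sum_{i}\cz(\ma_i)\subseteq\cz(\sum_{i}\ma_i)$; conversely $(3)$ makes $\sum_{i}\cz(\ma_i)$ a $z$-ideal containing $\sum_{i}\ma_i$, so minimality (Lemma \ref{lclk}(\ref{iclk})) forces $\cz(\sum_{i}\ma_i)\subseteq\sum_{i}\cz(\ma_i)$. For $(2)\Rightarrow(1)$, if $\ma,\mb\in\z$ then $\cz(\ma)=\ma$ and $\cz(\mb)=\mb$ by Lemma \ref{lclk}(\ref{altd}), so $(2)$ gives $\cz(\ma+\mb)=\ma+\mb$, and since the left-hand side is always a $z$-ideal, so is $\ma+\mb$.

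I expect the only real obstacle to be the directed-union step inside $(1)\Rightarrow(3)$: the reduction of an arbitrary sum to a directed family of finite subsums, together with the verification via Proposition \ref{ald} that the $z$-ideal property is preserved under such directed unions. Every other step is bookkeeping with the closure operator $\cz$ and the characterization of $z$-ideals as closed ideals.
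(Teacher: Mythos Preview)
Your proof is correct. The paper itself does not give a detailed argument for this proposition, merely stating that ``its proof is identical to the one given'' in \cite[Proposition 3.1(b)]{Mas80}; your cycle $(1)\Rightarrow(3)\Rightarrow(4)\Rightarrow(2)\Rightarrow(1)$, with the directed-union argument for $(1)\Rightarrow(3)$ and the closure manipulations via Lemma~\ref{lclk} for the remaining steps, is a standard and complete way to establish the equivalence and is almost certainly what the referenced proof does as well.
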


\section{Some distinguished types of $z$-ideals}
\label{sczi}

This section is dedicated to the examination of specific distinguished types of $z$-ideals of semirings. These types of $z$-ideals are derived by `restricting' the conventional definitions of corresponding types of ideals in semirings to $z$-ideals.

\begin{definition}
A proper $z$-ideal of a semiring  $S$ is called \emph{$z$-maximal} if it is not properly contained in another proper $z$-ideal. 	
\end{definition}

The following result guarantees that the set of $z$-maximal ideals in a semiring is nonempty. This result extends \cite[Corollary 2.2]{SA93}

\begin{lemma}\label{exma} 
Every proper $z$-ideal of a semiring  $S$ is contained in a $z$-maximal ideal of $S$.
\end{lemma}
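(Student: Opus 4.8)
The plan is to establish this as a standard Zorn's lemma argument, adapted to the class of $z$-ideals rather than all ideals. Let $\mz$ be a proper $z$-ideal of $S$. I would consider the collection
\[
\Sigma=\left\{\mathfrak{w}\in\z \mid \mz\subseteq \mathfrak{w}\subsetneq S\right\}
\]
of all proper $z$-ideals of $S$ containing $\mz$, partially ordered by inclusion. This set is nonempty since $\mz\in\Sigma$, and a maximal element of $\Sigma$ is precisely a $z$-maximal ideal containing $\mz$, which is what we want to produce.

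To apply Zorn's lemma, I would show that every nonempty chain $\mathcal{C}=\{\mathfrak{w}_j\}_{j\in J}$ in $\Sigma$ has an upper bound in $\Sigma$. The natural candidate is the union $\mathfrak{u}=\bigcup_{j\in J}\mathfrak{w}_j$. First I would verify that $\mathfrak{u}$ is an ideal: closure under addition and under multiplication by elements of $S$ follows from the fact that any two elements of $\mathfrak{u}$ lie in a common member of the chain (since $\mathcal{C}$ is totally ordered), and each $\mathfrak{w}_j$ is an ideal. Next, $\mathfrak{u}$ is proper: if $1\in\mathfrak{u}$ then $1\in\mathfrak{w}_j$ for some $j$, forcing $\mathfrak{w}_j=S$, contradicting $\mathfrak{w}_j\subsetneq S$. (Here I use that properness of an ideal is equivalent to not containing $1$.)

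The step I expect to be the main obstacle — and the only point where this argument departs from the classical maximal-ideal version — is verifying that the union $\mathfrak{u}$ is again a $z$-ideal, so that it actually lies in $\Sigma$. Using the alternative characterization from Proposition \ref{ald}, I would argue as follows: suppose $\mathcal{M}_a=\mathcal{M}_b$ with $b\in\mathfrak{u}$. Then $b\in\mathfrak{w}_j$ for some $j\in J$, and since $\mathfrak{w}_j$ is a $z$-ideal, we get $a\in\mathfrak{w}_j\subseteq\mathfrak{u}$. Hence $\mathfrak{u}$ is a $z$-ideal by Proposition \ref{ald}. (Equivalently, one could invoke the definition directly: for $x\in\mathfrak{u}$ we have $x\in\mathfrak{w}_j$ for some $j$, so $\bigcap\mathcal{M}_x\subseteq\mathfrak{w}_j\subseteq\mathfrak{u}$.) Thus $\mathfrak{u}\in\Sigma$ is an upper bound for $\mathcal{C}$.

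With the chain condition verified, Zorn's lemma yields a maximal element $\m_z$ of $\Sigma$. By construction $\m_z$ is a proper $z$-ideal containing $\mz$, and maximality in $\Sigma$ means $\m_z$ is not properly contained in any proper $z$-ideal; that is, $\m_z$ is $z$-maximal. This completes the proof.
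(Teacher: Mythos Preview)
Your argument is correct and follows essentially the same route as the paper: form the poset of proper $z$-ideals containing the given one, check that the union of a chain is again a proper $z$-ideal, and apply Zorn's lemma. The paper's version is terser (it merely asserts that the union is a $z$-ideal and is proper) and wraps the argument in a superfluous ``first assume $S$ is finitely generated \ldots\ now take $S=\langle 1\rangle$'' frame, but the substance is identical to what you wrote; your verification that the union is a $z$-ideal via Proposition~\ref{ald} (or directly from the definition) is exactly the step the paper leaves implicit.
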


\begin{proof}
We first prove the claim when $S$ is finitely generated. Suppose that $S=\langle s_1, \ldots, s_n\rangle,$ where $s_i\in S$, for all $1\leqslant i \leqslant n$. Let $\ma$ be a $z$-ideal of $S$. Consider the poset
\[\mathcal{E}:=\left\{ \mz\in \z \mid \ma\subseteq \mz \subsetneq S\right\}.\]
Since $\ma \in \mathcal{E}$, the set $\mathcal{E}$ is nonempty. If $\mathcal{K}:=\{\mz_i\}_{i\in I}\subseteq \mathcal{E}$ is a chain, then $\mz':=\bigcup_{i\in I} \mz_i$ is a $z$-ideal. Moreover, $\mz'\neq S$. Hence $\mz'\in \mathcal{E}$. So by Zorn's lemma, $\mathcal{E}$ has a maximal element, which is the desired $z$-maximal ideal. Now the claim follows by taking $S=\langle 1\rangle$.
\end{proof}

As pointed out in the introduction, the equivalent formulation (\ref{kxkx}) holds for all the distinguished types of $z$-ideals in the semirings we are examining. The following proposition serves as the initial example of this equivalence.

\begin{proposition}
\label{eqsm}
An ideal $\m$ of a semiring $S$ is $z$-maximal if and only if $\m$ is a maximal $z$-ideal of $S$.
\end{proposition}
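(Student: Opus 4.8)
The plan is first to pin down what the two notions actually say. The phrase \emph{maximal $z$-ideal} refers, in the sense of the equivalence~(\ref{kxkx}), to a $z$-ideal of $S$ that is maximal as an ideal, i.e.\ an element of $\mx$ that happens to lie in $\z$; but by Example~\ref{exm}(1) every maximal ideal is automatically a $z$-ideal, so this class is simply $\mx$ itself. By contrast, \emph{$z$-maximal} means maximal within the poset of proper $z$-ideals. Once this is made explicit, both implications become short, each resting on Example~\ref{exm}(1).

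For the implication from ``maximal $z$-ideal'' to ``$z$-maximal'', I would take $\m\in\mx$; by Example~\ref{exm}(1) it is a proper $z$-ideal. If $\m\subsetneq\mz\subsetneq S$ for some $\mz\in\z$, then $\mz$ is a proper ideal strictly containing the maximal ideal $\m$, contradicting the maximality of $\m$ among proper ideals. Hence $\m$ is not properly contained in any proper $z$-ideal, i.e.\ $\m$ is $z$-maximal.

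For the converse, let $\m$ be $z$-maximal, so in particular $\m$ is a proper ideal. I would invoke the fact that every proper ideal of $S$ is contained in some maximal ideal $\m'\in\mx$. By Example~\ref{exm}(1), $\m'$ is then a proper $z$-ideal with $\m\subseteq\m'$, and the $z$-maximality of $\m$ forces $\m=\m'$. Thus $\m$ is a maximal ideal, hence a maximal $z$-ideal, completing the equivalence.

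The only input beyond Example~\ref{exm}(1) is the assertion that every proper ideal extends to a maximal ideal, and this is where I expect the (minor) obstacle to lie: the nonemptiness of $\mx$ is guaranteed by \cite[Theorem~2]{SZ55}, and the extension property follows from the usual Zorn's lemma argument, since the union of a chain of proper ideals omits $1$ and is therefore again proper. With this standard fact in hand, the remainder of the proof is immediate.
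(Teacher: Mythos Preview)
Your argument is correct, and it takes a somewhat different route from the paper's. For the nontrivial implication (``$z$-maximal $\Rightarrow$ maximal''), the paper argues by contradiction via the $z$-closure operator: assuming $\m\subsetneq\ma\subsetneq S$ with $\ma\notin\z$, it produces the chain $\m=\cz(\m)\subsetneq\ma\subsetneq\cz(\ma)\subsetneq S$ and then observes that $\m\subsetneq\cz(\ma)\subsetneq S$ with $\cz(\ma)\in\z$ contradicts the $z$-maximality of $\m$. You instead extend $\m$ directly to some $\m'\in\mx$, use Example~\ref{exm}(1) to see $\m'\in\z$, and conclude $\m=\m'$ from $z$-maximality. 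Your version is more elementary and bypasses the closure machinery entirely; the paper's version is tailored to its programme of proving every instance of~(\ref{kxkx}) through $\cz$. Note, incidentally, that the paper's strict inclusion $\cz(\ma)\subsetneq S$ tacitly relies on the same extension-to-a-maximal-ideal fact you invoke explicitly (since $\ma\subseteq\m''$ for some $\m''\in\mx\subseteq\z$ forces $\cz(\ma)\subseteq\m''$), so the two arguments ultimately rest on the same ingredient.
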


\begin{proof}
It is clear that every $z$-ideal of $S$ which is also a maximal ideal is a $z$-maximal ideal. For the converse, suppose that  $\m$ is a $z$-maximal ideal of $S$ and $\m\subsetneq \ma\subsetneq S,$ for some $\ma\in \id\setminus \z .$ We need to show that $\m$ is a maximal ideal of $S$. From the assumption, we have
\[\m=\cz(\m)\subsetneq \ma \subsetneq \cz(\ma)\subsetneq \cz(S) =S,
	\]
where the first and last equalities follow respectively from Lemma \ref{lclk}(\ref{altd}) and Lemma \ref{lclk}(\ref{ckr}), and the first and the last strict  inclusions follow from Lemma \ref{lclk}(\ref{ijcl}). The middle strict inclusion follows from  Lemma \ref{lclk}(\ref{altd}) and the assumption that $\ma\in \id\setminus \z$.
Since $\m$ is $z$-maximal and since $\cz(\ma)$ is a $z$-ideal, $\m \subsetneq \cz(\ma)\subsetneq S$ leads to a contradiction. Therefore $\m$ is a maximal ideal of $S$.
\end{proof}

Our next objective is to introduce the concepts of $z$-prime and $z$-semiprime ideals of semirings. Additionally, we will introduce the notion of $z$-radicals of $z$-ideals.

\begin{definition}\label{kpri}
A proper $z$-ideal $\p$ of a semiring $S$ is called \emph{$z$-prime}  if  $\ma\mb\subseteq \p$ implies  $\ma\subseteq \p$ or $\mb\subseteq \p$, for all $\ma,$ $\mb\in \z.$ We denote by $\mathrm{Spec}_z(S)$ the set of all $z$-prime ideals of $S$ 
\end{definition}

We now show the equivalence formulation (\ref{kxkx}) for the $z$-prime ideals of semirings.
This result partially extends \cite[Proposition 3.5]{JRT22} 

\begin{proposition}
\label{eqp} 
An ideal $\p$ of a bzi-semiring $S$ is $z$-prime if and only if $\p$ is a prime $z$-ideal of $S$.
\end{proposition}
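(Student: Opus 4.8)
The plan is to prove the two directions separately, with the $z$-closure operator and the \emph{bzi}-hypothesis doing the heavy lifting in translating between arbitrary ideals and $z$-ideals.

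For the direction ``prime $z$-ideal $\Rightarrow$ $z$-prime,'' suppose $\p$ is a $z$-ideal that is prime in the ordinary sense, and take $z$-ideals $\ma,\mb\in\z$ with $\ma\mb\subseteq\p$. Since $\p$ is prime and $\ma\mb$ is an ideal, I would like to conclude $\ma\subseteq\p$ or $\mb\subseteq\p$. The subtlety is that the ordinary prime condition is phrased elementwise ($ab\in\p\Rightarrow a\in\p$ or $b\in\p$), so first I would record the standard equivalence that for ordinary ideals $\ma,\mb$, a prime ideal satisfies $\ma\mb\subseteq\p\Rightarrow\ma\subseteq\p$ or $\mb\subseteq\p$; this is the usual argument and holds verbatim for semirings. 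Applying it to our $z$-ideals $\ma$ and $\mb$ gives exactly the $z$-prime conclusion. Note this direction does not even require the \emph{bzi}-hypothesis.

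For the converse ``$z$-prime $\Rightarrow$ prime $z$-ideal,'' the point is to upgrade the $z$-prime condition (tested only on $z$-ideals) to the full prime condition (tested on all elements, equivalently all ordinary ideals). Here I expect the \emph{bzi}-hypothesis to be essential. Suppose $\p$ is $z$-prime, and let $a,b\in S$ with $ab\in\p$. Since $\p$ is a $z$-ideal, from $ab\in\p$ I get $\mathsf{m}_{ab}\subseteq\p$, i.e. $\cz(\langle ab\rangle)\subseteq\p$. The key computation is to relate $\cz(\langle ab\rangle)$ to $\cz(\langle a\rangle)$ and $\cz(\langle b\rangle)$: by Lemma \ref{lclk}(\ref{clijk}), in a \emph{bzi}-semiring $\cz(\langle a\rangle\langle b\rangle)=\cz(\langle a\rangle)\cz(\langle b\rangle)$, and since $\langle ab\rangle=\langle a\rangle\langle b\rangle$ this yields
\[
\cz(\langle a\rangle)\,\cz(\langle b\rangle)=\cz(\langle ab\rangle)\subseteq\p.
\]
Now $\cz(\langle a\rangle)$ and $\cz(\langle b\rangle)$ are genuine $z$-ideals, so the $z$-prime hypothesis applies and forces $\cz(\langle a\rangle)\subseteq\p$ or $\cz(\langle b\rangle)\subseteq\p$. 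Since $a\in\cz(\langle a\rangle)$ and $b\in\cz(\langle b\rangle)$, this gives $a\in\p$ or $b\in\p$, proving $\p$ is prime.

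The main obstacle is precisely the converse step, and it rests on having the \emph{multiplicativity} $\cz(\langle a\rangle\langle b\rangle)=\cz(\langle a\rangle)\cz(\langle b\rangle)$ of the closure operator on principal ideals; without the \emph{bzi}-assumption one only has $\cz(\ma\mb)=\cz(\ma)\cap\cz(\mb)$ from Lemma \ref{lclk}(\ref{arbin}), and an intersection being contained in $\p$ does not let one split off a factor. I would double-check that $\cz(\langle a\rangle)=\mathsf{m}_a$ (the basic $z$-ideal), so that the idempotence hypothesis feeding Theorem \ref{prdz} is exactly what guarantees these products are again $z$-ideals and the splitting argument is legitimate. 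Everything else (the elementwise-to-ideal passage for prime ideals, the containments $a\in\cz(\langle a\rangle)$) is routine and follows from the definitions and the properties already collected in Lemma \ref{lclk}.
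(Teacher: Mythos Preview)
Your proof is correct and follows essentially the same route as the paper: both directions are handled identically in spirit, with the converse relying on Lemma~\ref{lclk}(\ref{clijk}) to write $\cz(\ma)\cz(\mb)=\cz(\ma\mb)\subseteq\cz(\p)=\p$ and then invoking the $z$-prime hypothesis on the $z$-ideals $\cz(\ma),\cz(\mb)$. The only cosmetic difference is that the paper runs the converse for arbitrary ideals $\ma,\mb$ (verifying the ideal-theoretic form of primality directly), whereas you specialize to principal ideals $\langle a\rangle,\langle b\rangle$ to verify the elementwise form; since these formulations of primality are equivalent, the arguments are interchangeable.
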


\begin{proof}
It is evident that if $\p$ is a prime $z$-ideal, then obviously $\p$ is $z$-prime ideal.
For the converse, let $\p$ be a $z$-prime ideal of $S$ and $\ma \mb\subseteq \p$, for some $\ma$, $\mb\in \id$.  This  implies that
\[\cz(\ma) \cz(\mb) = \cz(\ma\mb) \subseteq \cz(\p))=\p,\]
where the first equality follows from Lemma \ref{lclk}(\ref{clijk}).
Since $\p$ is a $z$-prime ideal, we must have  $\ma\subseteq  \cz(\ma)\subseteq \p$ or $\mb\subseteq\cz(\mb)\subseteq \p.$ This proves that $\p$ is a prime ideal of $S$.
\end{proof}

The existence of minimal prime ideals in semirings is a well-established fact (see \cite[Proposition 7.14]{G99}). In the next proposition, we extend this result to $\mathrm{Spec}_z(S)$.

\begin{proposition}
\label{kmkp}
Every nonzero bzi-semiring contains a minimal $z$-prime ideal.
\end{proposition}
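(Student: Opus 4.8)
The plan is to apply Zorn's lemma to the poset $\mathrm{Spec}_z(S)$ ordered by \emph{reverse} inclusion, so that a minimal $z$-prime ideal emerges as a maximal element. Throughout I would lean on Proposition \ref{eqp}, which is available because $S$ is a bzi-semiring, to identify $\mathrm{Spec}_z(S)$ with the set of prime ideals that happen to be $z$-ideals. This identification reduces every step to a standard fact about prime ideals, combined with the closure of $z$-ideals under intersection (Proposition \ref{epzi}(\ref{ainz})).

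First I would verify that $\mathrm{Spec}_z(S)$ is nonempty. Since $S\neq 0$, the set $\mx$ is nonempty by \cite[Theorem 2]{SZ55}; pick $\m\in\mx$. By \cite[Corollary 7.13]{G99} the ideal $\m$ is prime, and by Example \ref{exm}(1) it is a $z$-ideal, so $\m$ is a prime $z$-ideal. By Proposition \ref{eqp} it is $z$-prime, hence $\m\in\mathrm{Spec}_z(S)$.

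The key step is to show that every chain in $(\mathrm{Spec}_z(S),\supseteq)$ admits an upper bound; equivalently, that the intersection $\p:=\bigcap_{i\in I}\p_i$ of any chain $\{\p_i\}_{i\in I}$ of $z$-prime ideals again lies in $\mathrm{Spec}_z(S)$. There are three points to check. (a) $\p$ is a $z$-ideal, which is immediate from Proposition \ref{epzi}(\ref{ainz}). (b) $\p$ is proper, since it is contained in any one of the proper ideals $\p_i$. (c) $\p$ is prime: if $ab\in\p$ while $a\notin\p$ and $b\notin\p$, choose indices with $a\notin\p_i$ and $b\notin\p_j$; since the $\p_i$ form a chain, one of $\p_i,\p_j$ contains the other, and applying primeness of the smaller member to $ab$ produces a contradiction. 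By Proposition \ref{eqp}, a prime $z$-ideal is $z$-prime, so $\p\in\mathrm{Spec}_z(S)$ and is an upper bound for the chain under $\supseteq$. Zorn's lemma then yields a maximal element of $(\mathrm{Spec}_z(S),\supseteq)$, which is exactly a minimal $z$-prime ideal of $S$.

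I expect the only delicate point to be item (c), namely confirming that the intersection of a chain of $z$-prime ideals is genuinely prime and remains proper; the remaining verifications are bookkeeping. As an alternative that avoids the chain argument, one could instead apply Proposition \ref{mpz} to the smallest $z$-ideal $\cz(0)$, which is proper because $\cz(0)\subseteq\jr\subsetneq S$, and take a prime ideal $\p$ minimal over $\cz(0)$; then $\p$ is a $z$-ideal by Proposition \ref{mpz}, and since every $z$-prime ideal contains the smallest $z$-ideal $\cz(0)$, any $z$-prime ideal sitting inside $\p$ must equal $\p$, giving minimality directly.
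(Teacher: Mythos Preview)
Your proposal is correct and follows essentially the same strategy as the paper: show $\mathrm{Spec}_z(S)\neq\emptyset$ and then apply Zorn's lemma (which the paper leaves as ``routine'' while you spell out the chain argument). Your route to nonemptiness via a maximal ideal of $S$ directly (Example~\ref{exm}(1)) is a bit cleaner than the paper's detour through Lemma~\ref{exma} and Proposition~\ref{eqsm}, but otherwise the arguments coincide.
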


\begin{proof}
Suppose that  $S$ is a nonzero semiring. It follows from Lemma \ref{exma} that $S$ has a $z$-maximal ideal, say $\m$, which by Proposition \ref{eqsm} is also a  maximal $z$-ideal, and hence $\m$ is a prime $z$-ideal by \cite[Corollary 7.13]{G99}. Finally, by  Proposition \ref{eqp}, we conclude that $\m$ is a $z$-prime ideal. Hence, the set $\mathrm{Spec}_z(S)$ is nonempty. The claim now follows from a routine application of Zorn's lemma.	
\end{proof}
 
The following proposition extends (\cite[Lemma 3.19]{AK13}). The proof is identical to rings.
 
\begin{proposition}[Prime Avoidance Lemma]
Let $\ma$ be a subset of a semiring $S$ that is stable under addition and multiplication, and $\p_1, \ldots, \p_n$ be $z$-ideals such that $\p_3, \ldots, \p_n$
are $z$-prime ideals. If $\ma\nsubseteq \p_j$, for all $1\leqslant j\leqslant n,$ then there is an $x \in \ma$ such that $x \notin
\p_j$, for all $1\leqslant j\leqslant n$.
\end{proposition}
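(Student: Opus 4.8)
The plan is to run the classical prime-avoidance induction on $n$, the only genuinely new feature being the semiring arithmetic. The case $n=1$ is immediate from $\ma\nsubseteq\p_1$. For the inductive step I would first manufacture, for each index $i$, a witness $x_i\in\ma$ lying outside every $\p_j$ with $j\neq i$: applying the statement for $n-1$ ideals to the subfamily $\{\p_j\mid j\neq i\}$ (which retains at most two non-$z$-prime members, namely among $\p_1,\p_2$, so after reindexing these first the hypotheses hold) shows $\ma\nsubseteq\bigcup_{j\neq i}\p_j$, and any element of $\ma$ outside that union serves as $x_i$. If some $x_i$ also avoids $\p_i$ we are done, so we may assume $x_i\in\p_i$ for every $i$.

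It then remains to assemble a single element of $\ma$ outside all the $\p_j$. For $n=2$ I would take $x_1+x_2$, and for $n\geqslant 3$ I would exploit that $\p_n$ is $z$-prime and set
\[ y \;=\; x_n + x_1 x_2\cdots x_{n-1}, \]
which lies in $\ma$ since $\ma$ is stable under addition and multiplication. Here I would use that a $z$-prime ideal is prime in the ordinary elementwise sense: if $ab\in\p$ then $\langle a\rangle\langle b\rangle\subseteq\p$, so by Lemma \ref{lclk}(\ref{arbin}) we get $\cz(\langle a\rangle)\cap\cz(\langle b\rangle)=\cz(\langle a\rangle\langle b\rangle)\subseteq\p$, whence $\cz(\langle a\rangle)\cz(\langle b\rangle)\subseteq\p$ by Lemma \ref{psi}, and $z$-primeness forces $a\in\p$ or $b\in\p$. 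Consequently each $x_j\notin\p_n$ for $j<n$ gives $x_1\cdots x_{n-1}\notin\p_n$, while for $j<n$ the factor $x_j\in\p_j$ puts $x_1\cdots x_{n-1}\in\p_j$. Thus for every index $j$ exactly one of the two summands of $y$ lies in $\p_j$ and the other does not; only primality of $\p_n$ is used, which is why $\p_1,\p_2$ may be arbitrary $z$-ideals.

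The main obstacle is precisely the final deduction: \emph{a sum with one summand inside $\p_j$ and one summand outside must itself avoid $\p_j$}. In a ring this is the step that silently invokes additive inverses (one recovers the outside summand as $y$ minus the inside one), and both the $n=2$ base case and the verification for $y$ reduce to it. In a semiring this cancellation is not available in general, so the phrase ``identical to rings'' must be read with care. The implication holds exactly when the relevant $z$-ideals are subtractive, i.e.\ $u+v\in\p_j$ together with $v\in\p_j$ forces $u\in\p_j$; I would therefore either establish this subtractivity for the $z$-ideals at hand (it is automatic in the motivating ring case of \cite{Mas73}) or record it explicitly as a hypothesis. Once this single additive point is secured, the combinatorial skeleton — the induction together with the asymmetric choice of $y$ — transfers verbatim from the ring argument.
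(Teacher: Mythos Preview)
Your argument is exactly what the paper has in mind: its entire proof reads ``The proof is identical to rings,'' pointing to the classical induction of \cite[Lemma~3.19]{AK13}, and you have reproduced that induction faithfully, including the auxiliary observation (not spelled out in the paper) that a $z$-prime ideal is prime elementwise via Lemma~\ref{lclk}(\ref{arbin}) and Lemma~\ref{psi}.

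Your caution about the additive step is well placed and is a point the paper itself glosses over. The implication ``$u+v\in\p_j$ and $v\in\p_j\Rightarrow u\in\p_j$'' is precisely subtractivity (the $k$-ideal property), and $z$-ideals of a general semiring need not have it: already in $\mathds{N}$ the unique maximal ideal $\mathds{N}\setminus\{1\}$ contains $2$ and $1+2=3$ but not $1$, so it is a $z$-ideal that is not a $k$-ideal. Thus ``identical to rings'' should be read with exactly the caveat you flag; without assuming the $\p_j$ are subtractive, the classical combination $x_1+x_2$ or $x_n+x_1\cdots x_{n-1}$ is not guaranteed to avoid every $\p_j$. You have correctly isolated the one place where the transplant from rings to semirings is not free.
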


We will now introduce the concept of $z$-semiprime ideals. In Theorem \ref{spkr}, we shall demonstrate the equivalence between $z$-semiprime ideals and $z$-radical ideals (see Definition \ref{drad}).

\begin{definition}\label{kspr}
A proper $z$-ideal $\p$ of a semiring $S$ is called \emph{$z$-semiprime} if $\ma^2\subseteq \p$ implies $\ma\subseteq \p,$ for all $\ma\in \z$. 
\end{definition}

Much like with $z$-prime ideals, we also have an equivalent formulation for $z$-semiprime ideals.

\begin{proposition}\label{eqpp}
An ideal $\q$ of a bzi-semiring $S$ is $z$-semiprime if and only if $\q$ is a $z$-ideal and a semiprime ideal of $S$.
\end{proposition}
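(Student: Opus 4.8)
The plan is to follow the same template as Proposition \ref{eqp}, treating the two implications separately. The implication from ``$z$-ideal and semiprime'' to ``$z$-semiprime'' is immediate: a semiprime ideal is by definition proper, so $\q$ is a proper $z$-ideal, and the defining implication of semiprimeness, applied only to the subfamily $\z \subseteq \id$, is exactly the defining implication of $z$-semiprimeness. Hence all the work lies in the converse.

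For the converse I would start from a $z$-semiprime ideal $\q$ (so $\q$ is already a proper $z$-ideal) together with an arbitrary ideal $\ma \in \id$ satisfying $\ma^2 \subseteq \q$, aiming to conclude $\ma \subseteq \q$. The idea is to pass to the $z$-closure so as to land inside the $z$-world, where $z$-semiprimeness can be invoked. Using monotonicity (Lemma \ref{lclk}(\ref{ijcl})), the fixed-point property $\cz(\q)=\q$ (Lemma \ref{lclk}(\ref{altd})), and the \emph{bzi}-identity $\cz(\ma\mb)=\cz(\ma)\cz(\mb)$ (Lemma \ref{lclk}(\ref{clijk})) with $\mb=\ma$, I obtain
\[ \cz(\ma)^2=\cz(\ma)\cz(\ma)=\cz(\ma^2)\subseteq \cz(\q)=\q. \]
Since $\cz(\ma)$ is a $z$-ideal (Lemma \ref{lclk}(\ref{iclk})) and $\q$ is $z$-semiprime, this forces $\cz(\ma)\subseteq \q$, whence $\ma\subseteq \cz(\ma)\subseteq \q$. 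This shows $\q$ is semiprime, and as it was given to be a $z$-ideal, the converse is complete.

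The step I expect to carry the weight is precisely the production of $\cz(\ma)^2\subseteq \q$, which is where the \emph{bzi} hypothesis enters through Lemma \ref{lclk}(\ref{clijk}); everything else is routine bookkeeping with monotonicity and the idempotence of the closure. It is worth noting that there is an even shorter route that bypasses \emph{bzi} altogether: by Lemma \ref{lclk}(\ref{czan}) one has $\cz(\ma^2)=\cz(\ma)$ for \emph{every} ideal $\ma$, so $\ma^2\subseteq \q$ already yields $\ma\subseteq \cz(\ma)=\cz(\ma^2)\subseteq \cz(\q)=\q$; this in fact shows that every proper $z$-ideal is semiprime, so both sides of the claimed equivalence collapse to the single condition that $\q$ be a proper $z$-ideal. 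I would still present the closure-product argument in the main text to keep the treatment uniform with the $z$-prime and $z$-strongly irreducible cases, and relegate the shortcut to a remark. The only genuine subtlety to watch is the quantifier mismatch between the two definitions — semiprimeness quantifies over all of $\id$ while $z$-semiprimeness quantifies only over $\z$ — which is exactly what the closure operator reconciles.
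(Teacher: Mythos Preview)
Your proof is correct and essentially identical to the paper's: the easy direction is dispatched the same way, and for the converse you run the same chain $\cz(\ma)^2=\cz(\ma^2)\subseteq\cz(\q)=\q$ via Lemma \ref{lclk}(\ref{clijk}), (\ref{ijcl}), (\ref{altd}), then invoke $z$-semiprimeness and $\ma\subseteq\cz(\ma)$. Your side remark that Lemma \ref{lclk}(\ref{czan}) (or already (\ref{rzi})) makes every proper $z$-ideal semiprime---so that the \emph{bzi} hypothesis is superfluous and both sides of the equivalence reduce to ``$\q$ is a proper $z$-ideal''---is correct and goes beyond what the paper records.
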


\begin{proof}
If $\q$ is a $z$-ideal and also a semiprime ideal of a semiring $S$, then by Definition \ref{kspr}, $\q$ is $z$-semiprime. For the converse, suppose that $\q$ is a $z$-semiprime ideal and $\ma^2\subseteq \q$ for some $\ma\in\id.$ Then
\[(\cz(\ma))(\cz(\ma))\subseteq \cz(\ma^2)\subseteq \cz(\q)=\q,\]
where the first and second inclusions respectively follow from Lemma \ref{lclk}(\ref{clijk}) and Lemma \ref{lclk}(\ref{ijcl}), whereas the last equality follows from the fact that $\q$ is a $z$-ideal and Lemma \ref{lclk}(\ref{ijcl}). Since $\q$ is $z$-semiprime, $\cz(\ma)\subseteq \q$, and by  Lemma \ref{lclk}(\ref{iclk}), we have $\ma\subseteq \q.$ 
\end{proof}

In the context of rings, the radical of an ideal $\ma$ can be defined as the intersection of all prime ideals that contain $\ma$. If we narrow our focus to $z$-ideals and consider prime ideals as $z$-primes, we arrive at the concept of a $z$-radical.
 
\begin{definition}
\label{drad}
	
The \emph{$z$-radical of a $z$-ideal $\ma$} of a semiring $S$ is defined by
\begin{equation}\label{radki}
\sqrt[z]{\ma}=\bigcap_{\p\in \mathrm{Spec}_z(S)}\left\{\p\mid \ma\subseteq \p \right\}.
\end{equation}
Whenever $\sqrt[z]{\ma}=\ma,$ we say $\ma$ is a \emph{$z$-radical ideal}.	
\end{definition}

In the following lemma, we collect some elementary properties of $z$-radicals of $z$-ideals.

\begin{lemma}\label{radpr}
In the following, $\ma$ and $\mb$ are $z$-ideals of a semiring $S$.
\begin{enumerate}
		
\item\label{irad} $\sqrt[z]{\ma}$ is a $z$-ideal containing $\ma$.
		
\item $\sqrt[z]{\sqrt[z]{\ma}}=\sqrt[z]{\ma}.$
		
\item\label{ijicj} $\sqrt[z]{\ma\mb}\supseteq  \sqrt[z]{\ma\cap \mb}\supseteq  \sqrt[z]{\ma}\cap \sqrt[z]{\mb}.$
\end{enumerate}
\end{lemma}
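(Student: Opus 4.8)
The plan is to prove the three parts in order, leaning on the representation formula (\ref{radki}) together with the basic closure properties from Lemma \ref{lclk} and the characterization of $z$-prime ideals from Proposition \ref{eqp}. For part (1), I would argue that $\sqrt[z]{\ma}$ is an intersection of $z$-prime ideals, each of which is in particular a $z$-ideal; since the intersection of any family of $z$-ideals is again a $z$-ideal by Proposition \ref{epzi}(\ref{ainz}), the set $\sqrt[z]{\ma}$ is a $z$-ideal. The containment $\ma\subseteq\sqrt[z]{\ma}$ is immediate from (\ref{radki}), since each $\p$ appearing in the intersection satisfies $\ma\subseteq\p$ by the defining condition of the indexing set.

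For part (2), the inclusion $\ma\subseteq\sqrt[z]{\ma}$ from part (1) gives $\sqrt[z]{\ma}\subseteq\sqrt[z]{\sqrt[z]{\ma}}$ by the evident monotonicity of the $z$-radical (which itself follows directly from (\ref{radki}), since enlarging the ideal only shrinks the indexing family of $\p$'s containing it). For the reverse inclusion, I would observe that every $z$-prime ideal $\p$ containing $\ma$ also contains $\sqrt[z]{\ma}$, because $\sqrt[z]{\ma}$ is the intersection of all such $\p$; hence the family of $z$-primes containing $\sqrt[z]{\ma}$ is exactly the family of $z$-primes containing $\ma$, and the two intersections coincide. This is the same idempotency argument used for ordinary radicals in (\ref{rep}), transported to the $z$-prime setting.

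For part (3), the two inclusions reduce to monotonicity of $\sqrt[z]{-}$ once I know the underlying ideal inclusions. The right-hand inclusion $\sqrt[z]{\ma\cap\mb}\supseteq\sqrt[z]{\ma}\cap\sqrt[z]{\mb}$ follows because $\ma\cap\mb\subseteq\ma$ and $\ma\cap\mb\subseteq\mb$ yield $\sqrt[z]{\ma\cap\mb}\supseteq\sqrt[z]{\ma}$ and $\sqrt[z]{\ma\cap\mb}\supseteq\sqrt[z]{\mb}$ — wait, these go the wrong way; instead one uses that $\ma\cap\mb\subseteq\ma$ forces every $z$-prime containing $\ma\cap\mb$ to be among a \emph{larger} family, so $\sqrt[z]{\ma\cap\mb}\subseteq\sqrt[z]{\ma}$, and dually $\subseteq\sqrt[z]{\mb}$, whence $\sqrt[z]{\ma\cap\mb}\subseteq\sqrt[z]{\ma}\cap\sqrt[z]{\mb}$. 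So the correct direction requires care: I would instead prove the stated chain by showing that any $z$-prime $\p$ containing $\ma\cap\mb$ contains $\sqrt[z]{\ma}\cap\sqrt[z]{\mb}$, using that $z$-primeness of $\p$ together with $\ma\mb\subseteq\ma\cap\mb\subseteq\p$ (Lemma \ref{psi}) forces $\ma\subseteq\p$ or $\mb\subseteq\p$, hence $\sqrt[z]{\ma}\subseteq\p$ or $\sqrt[z]{\mb}\subseteq\p$, so in either case $\sqrt[z]{\ma}\cap\sqrt[z]{\mb}\subseteq\p$; intersecting over all such $\p$ gives the middle containment. The left-hand inclusion $\sqrt[z]{\ma\mb}\supseteq\sqrt[z]{\ma\cap\mb}$ follows from $\ma\mb\subseteq\ma\cap\mb$ (Lemma \ref{psi}) and monotonicity.

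The main obstacle is getting the directions of the inclusions right in part (3): monotonicity of $\sqrt[z]{-}$ and the inclusion $\ma\mb\subseteq\ma\cap\mb$ pull in opposite senses, so the nontrivial middle step genuinely needs the $z$-primeness of the ideals $\p$ in the defining intersection (via the reasoning $\ma\mb\subseteq\p\Rightarrow\ma\subseteq\p$ or $\mb\subseteq\p$), rather than pure monotonicity. Everything else is bookkeeping with the representation (\ref{radki}) and Lemma \ref{lclk}.
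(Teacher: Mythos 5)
Your treatments of parts (1) and (2) are correct and essentially the paper's own arguments: for (1), $\sqrt[z]{\ma}$ is an intersection of $z$-prime (hence $z$-) ideals and so is a $z$-ideal, and for (2) the key point is that every $z$-prime containing $\ma$ automatically contains $\sqrt[z]{\ma}$, so the two defining families of primes coincide.

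In part (3), however, your justification of the left-hand inclusion fails in exactly the way you yourself flagged for the middle one. Monotonicity of $\sqrt[z]{-}$ applied to $\ma\mb\subseteq\ma\cap\mb$ yields $\sqrt[z]{\ma\mb}\subseteq\sqrt[z]{\ma\cap\mb}$, which is the \emph{reverse} of the stated $\sqrt[z]{\ma\mb}\supseteq\sqrt[z]{\ma\cap\mb}$; so ``Lemma \ref{psi} and monotonicity'' does not deliver it. The inclusion as stated needs the same $z$-primeness argument you used for the middle step: if $\p\in\mathrm{Spec}_z(S)$ and $\ma\mb\subseteq\p$, then, since $\ma$ and $\mb$ are $z$-ideals, Definition \ref{kpri} gives $\ma\subseteq\p$ or $\mb\subseteq\p$, hence $\ma\cap\mb\subseteq\p$ and therefore $\sqrt[z]{\ma\cap\mb}\subseteq\p$; intersecting over all $z$-primes $\p\supseteq\ma\mb$ gives $\sqrt[z]{\ma\cap\mb}\subseteq\sqrt[z]{\ma\mb}$. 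With that repair your argument is complete, and indeed more careful than the paper's, whose entire proof of (3) is ``follows from Definition \ref{drad} and Lemma \ref{psi}'' --- a justification that, read as pure monotonicity, only establishes the opposite chain of inclusions. Your diagnosis that the stated containments genuinely require the $z$-primeness of the ideals in the defining intersection is the right one; you just need to apply it to both nontrivial inclusions, not only the middle one.
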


\begin{proof}
(1) From Definition \ref{drad}, it is clear that $\ma\subseteq \sqrt[z]{\ma}$. To show $\sqrt[z]{\ma}$ is a $z$-ideal, let $x\in \sqrt[z]{\ma}.$ This implies that $x\in \p$, for all $\p\in \mathrm{Spec}_z(S)$ such that $\p\supseteq \ma$. Since each such $\p$ is a $z$-ideal, $\bigcap \mathcal{M}_x\subseteq \p$. This implies that
\[ \bigcap_{\m \in \mx} \{\m \mid x\in \m \}\subseteq \bigcap_{\p \supseteq \ma}\p=\sqrt[z]{\ma}.\]
	
(2) Since by (1), $\sqrt[z]{\ma}$ is a $z$-ideal, by Lemma \ref{lclk}(\ref{iclk}), we have $\sqrt[z]{\ma}\subseteq \sqrt[z]{\sqrt[z]{\ma}}.$ Since by (1), $\ma\subseteq \sqrt[z]{\ma},$ by Definition \ref{drad}, we have $\sqrt[z]{\ma}\supseteq \sqrt[z]{\sqrt[z]{\ma}}.$
	
(3) Follows from Definition \ref{drad} and Lemma \ref{psi}.
\end{proof}

Next, we aim to establish the equivalence between $z$-semiprime ideals and $z$-radical ideals in a \emph{bzi}-semiring. This equivalence is well-known to exist for semiprime and radical ideals in (noncommutative) rings and semirings. In the noncommutative case, the concept of $m$-systems and $n$-systems in rings (see \cite[\textsection 10]{L01}) and in semirings (see \cite[Proposition 7.25]{G99}) are required. However, in the context of $z$-ideals in commutative semirings, multiplicatively closed subsets alone are sufficient. To establish this equivalence, we will proceed through a series of lemmas. 
Note that the first lemma in fact gives a characterization of $z$-prime ideals of semirings, similar to prime ideals of (commutative) rings.

\begin{lemma}\label{rpms}
A $z$-ideal $\p$ of a bzi-semiring $S$ is $z$-prime if and only if $S\setminus \p$ is a multiplicatively closed subset.
\end{lemma}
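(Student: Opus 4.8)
The plan is to reduce the statement to the classical characterization of prime ideals by their complements, using Proposition \ref{eqp} as the bridge. Since $S$ is a bzi-semiring and $\p$ is assumed to be a $z$-ideal, Proposition \ref{eqp} identifies ``$\p$ is $z$-prime'' with ``$\p$ is a prime ideal of $S$.'' Hence the whole statement collapses to the purely ideal-theoretic equivalence: a proper ideal $\p$ is prime if and only if $S\setminus\p$ is multiplicatively closed --- the exact semiring analogue of the familiar ring fact.

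For the forward direction I would record that properness of $\p$ gives $1\in S\setminus\p$, and then argue by contraposition: if $a,b\in S\setminus\p$ had their product in $\p$, primeness would place $a$ or $b$ in $\p$, contradicting $a,b\notin\p$. For the converse I would likewise observe that $1\in S\setminus\p$ forces $\p\neq S$, and then, given $ab\in\p$ with $a\notin\p$ and $b\notin\p$, derive the contradiction $ab\in S\setminus\p$ from multiplicative closure. Re-applying Proposition \ref{eqp} converts primeness back into $z$-primeness, completing both implications.

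I do not anticipate a genuine obstacle; the single delicate point is the mismatch between the ideal-level definition of $z$-primeness (quantified over $\ma,\mb\in\z$) and the element-level condition on the complement, and this is precisely what Proposition \ref{eqp} supplies for free in a bzi-semiring. Should one prefer a self-contained argument bypassing Proposition \ref{eqp}, the key computation is that, since maximal ideals are prime, $\mathcal{M}_{ab}=\mathcal{M}_a\cup\mathcal{M}_b$ and therefore $\mathsf{m}_{ab}=\mathsf{m}_a\cap\mathsf{m}_b\supseteq\mathsf{m}_a\mathsf{m}_b$ by Lemma \ref{psi}. Feeding $ab\in\p$ through this chain (using that $\p$ is a $z$-ideal, so $\mathsf{m}_{ab}\subseteq\p$) yields $\mathsf{m}_a\mathsf{m}_b\subseteq\p$, whereupon $z$-primeness forces $\mathsf{m}_a\subseteq\p$ or $\mathsf{m}_b\subseteq\p$, contradicting $a\in\mathsf{m}_a\setminus\p$ and $b\in\mathsf{m}_b\setminus\p$; the reverse implication then follows by choosing $a\in\ma\setminus\p$ and $b\in\mb\setminus\p$ to violate multiplicative closure.
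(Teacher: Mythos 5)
Your main argument is essentially identical to the paper's: both directions reduce, via Proposition \ref{eqp}, to the classical element-level equivalence between primeness of a proper ideal and multiplicative closedness of its complement, with $1\in S\setminus\p$ handling properness. Your optional self-contained aside (using $\mathcal{M}_{ab}=\mathcal{M}_a\cup\mathcal{M}_b$, hence $\mathsf{m}_a\mathsf{m}_b\subseteq\mathsf{m}_{ab}\subseteq\p$, and then $z$-primeness applied to the basic $z$-ideals $\mathsf{m}_a$, $\mathsf{m}_b$) is also correct and is a nice bonus, since it avoids the \emph{bzi} hypothesis in the forward direction, but it is not needed for the result as stated.
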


\begin{proof}
Let  $\p$ be a $z$-prime ideal of $S$. Since  by definition $\p$ is a proper ideal of $S$, $1\notin \p$, and hence $1\in S\setminus \p$. Suppose that $x,$ $y\in S\setminus \p$. Then $xy\notin \p$, by Proposition \ref{eqp}, and hence $xy\in S\setminus \p.$ Conversely, suppose that $\p$ is a proper $z$-ideal of $S$ and let $x,$ $y\notin \p$. This implies that $x,$ $y\in S\setminus \p$. Since $S\setminus \p$ is multiplicatively closed, $xy\in S\setminus \p$, and hence $xy\notin \p$, proving that $\p$ is a prime ideal of $S$. By Proposition \ref{eqp}, this implies that $\p$ is a $z$-prime ideal of $S$. 
\end{proof}

\begin{lemma}\label{mxkp}
Let $S$ be a multiplicatively closed subset of a bzi-semiring $S$. Suppose that  $\p$ is a $z$-ideal which is maximal with respect to the property: $\p\cap S=\emptyset$. Then $\p$ is a $z$-prime ideal.
\end{lemma}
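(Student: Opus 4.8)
The plan is to adapt the classical argument that an ideal maximal with respect to being disjoint from a multiplicatively closed set is prime, but to carry the whole construction out inside the lattice of $z$-ideals by systematically replacing ordinary ideal sums with their $z$-closures. Write $T$ for the given multiplicatively closed subset, so that the hypothesis reads: $\p$ is a $z$-ideal maximal among $z$-ideals satisfying $\p\cap T=\emptyset$. I will verify the defining condition of Definition \ref{kpri} directly: assuming $\ma\mb\subseteq\p$ for $z$-ideals $\ma$, $\mb$, I will show that $\ma\subseteq\p$ or $\mb\subseteq\p$, arguing by contradiction.

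Suppose instead that $\ma\not\subseteq\p$ and $\mb\not\subseteq\p$. Then $\p\subsetneq \p+\ma\subseteq \cz(\p+\ma)$ and likewise $\p\subsetneq \cz(\p+\mb)$, so both $\cz(\p+\ma)$ and $\cz(\p+\mb)$ are $z$-ideals properly containing $\p$ (they are $z$-ideals by Lemma \ref{lclk}(\ref{iclk})). By the maximality of $\p$, neither can be disjoint from $T$, so I may choose $t_1\in T\cap\cz(\p+\ma)$ and $t_2\in T\cap\cz(\p+\mb)$. Since $T$ is multiplicatively closed, $t_1t_2\in T$, while $t_1t_2$ lies in the product ideal $\cz(\p+\ma)\,\cz(\p+\mb)$. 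The key computation is then
\[
\cz(\p+\ma)\,\cz(\p+\mb)=\cz\big((\p+\ma)(\p+\mb)\big)\subseteq\cz(\p)=\p,
\]
where the first equality is Lemma \ref{lclk}(\ref{clijk}) (valid because $S$ is a bzi-semiring), and the inclusion uses $(\p+\ma)(\p+\mb)\subseteq \p+\ma\mb\subseteq\p$ together with monotonicity (Lemma \ref{lclk}(\ref{ijcl})) and the identity $\cz(\p)=\p$ for the $z$-ideal $\p$ (Lemma \ref{lclk}(\ref{altd})). Hence $t_1t_2\in\p\cap T$, contradicting $\p\cap T=\emptyset$.

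The main obstacle, and the reason the bzi-hypothesis is needed, is precisely the step that is trivial for ordinary ideals: the sum of two $z$-ideals need not be a $z$-ideal, so one cannot simply enlarge $\p$ by $\ma$ and stay inside the relevant poset. Passing to $z$-closures repairs this, but then one must control the product of the two enlarged $z$-ideals. This is exactly where being a bzi-semiring pays off: Theorem \ref{prdz} guarantees that the product of two $z$-ideals is again a $z$-ideal, and Lemma \ref{lclk}(\ref{clijk}) upgrades the automatic inclusion into the equality used in the display above. Everything else --- the containment $(\p+\ma)(\p+\mb)\subseteq\p$ and the invocation of maximality --- is routine. (Alternatively, one could prove $\p$ prime as an ordinary ideal by the same element-wise computation and then invoke Proposition \ref{eqp}; I prefer the direct verification, since it makes the role of the $z$-closure and the bzi-hypothesis transparent.)
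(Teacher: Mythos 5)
Your proof is correct, but it executes the classical ``maximal disjoint from a multiplicative set'' argument at a different level than the paper does. The paper's proof is elementwise: assuming $x,y\notin\p$ with $\langle x\rangle\langle y\rangle\subseteq\p$, it picks $s\in(\langle x\rangle+\p)\cap T$ and $s'\in(\langle y\rangle+\p)\cap T$, derives $ss'\in\p\cap T$, concludes that $\p$ is prime as an ordinary ideal, and only then invokes Proposition \ref{eqp} to upgrade ``prime $z$-ideal'' to ``$z$-prime.'' You instead verify Definition \ref{kpri} directly for $z$-ideals $\ma,\mb$, enlarging $\p$ to $\cz(\p+\ma)$ and $\cz(\p+\mb)$ and controlling their product via Lemma \ref{lclk}(\ref{clijk}). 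Your route has a genuine advantage in rigor: since $\p$ is maximal only among \emph{$z$-ideals} disjoint from $T$, the ideal $\langle x\rangle+\p$ used in the paper need not lie in the relevant poset, so the paper's appeal to maximality is, strictly speaking, incomplete without first passing to the $z$-closure --- exactly the repair you make explicit. Two small remarks: you should note that $1\in T$ forces $\p$ to be proper (part of Definition \ref{kpri}); and the bzi-hypothesis is less essential to your display than you suggest, since the one-sided inclusion $\cz(\p+\ma)\,\cz(\p+\mb)\subseteq\cz\bigl((\p+\ma)(\p+\mb)\bigr)$ already follows from Lemma \ref{lclk}(14), which needs no bzi assumption --- you never use the reverse containment. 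Your closing parenthetical correctly identifies the paper's actual argument as the alternative you chose not to follow.
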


\begin{proof}
Suppose that $x\notin \p$ and $y\notin \p$, but $\langle x\rangle \langle y \rangle \subseteq \p.$ By the assumption on $\p$, there exist $s,$ $s'\in S$ such that $s\in \langle x\rangle+\p$ and $s'\in \langle y\rangle+\p$. This implies that
\[ss'\in (\langle x\rangle+\p)(\langle y\rangle+\p)=\langle x\rangle \langle y \rangle+\p\subseteq \p,\]
a contradiction. Hence $\p$ is prime and by Proposition \ref{eqp}, $\p$ is a $z$-prime ideal.
\end{proof}

\begin{lemma}\label{rkt}
Let $\ma$ be a $z$-ideal of a bzi-semiring $S$. Then   \[\sqrt[z]{\ma}=\mathcal{T}:=\left\{r\in S\mid \mathrm{every\; multiplicatively\; closed\; subset\; containing}\;r\;\mathrm{intersects}\;\ma\right\}.\]
\end{lemma}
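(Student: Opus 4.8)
The plan is to prove the two inclusions $\mathcal{T}\subseteq\sqrt[z]{\ma}$ and $\sqrt[z]{\ma}\subseteq\mathcal{T}$ separately, where the former is routine and the latter requires a Zorn's lemma construction modelled on the classical characterization of the radical of an ideal.

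For $\mathcal{T}\subseteq\sqrt[z]{\ma}$, I would take $r\in\mathcal{T}$ and an arbitrary $z$-prime ideal $\p$ with $\ma\subseteq\p$, and show $r\in\p$. By Lemma \ref{rpms}, the set $S\setminus\p$ is multiplicatively closed. If we had $r\notin\p$, then $S\setminus\p$ would be a multiplicatively closed subset containing $r$, so by the definition of $\mathcal{T}$ it must meet $\ma$; but $\ma\subseteq\p$ forces $\ma\cap(S\setminus\p)=\emptyset$, a contradiction. Hence $r\in\p$ for every $z$-prime $\p\supseteq\ma$, and by the definition (\ref{radki}) of $\sqrt[z]{\ma}$ this gives $r\in\sqrt[z]{\ma}$.

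For $\sqrt[z]{\ma}\subseteq\mathcal{T}$, I would argue by contraposition: suppose $r\notin\mathcal{T}$, so there is a multiplicatively closed subset $X$ with $r\in X$ and $X\cap\ma=\emptyset$. The goal is to produce a $z$-prime ideal containing $\ma$ but avoiding $r$. I consider the poset of all $z$-ideals of $S$ that contain $\ma$ and are disjoint from $X$, ordered by inclusion. It is nonempty, since $\ma$ itself is a $z$-ideal and $\ma\cap X=\emptyset$. The union of a chain of $z$-ideals is again a $z$-ideal (as already used in the proof of Lemma \ref{exma}), and such a union stays disjoint from $X$, so every chain has an upper bound inside the poset; Zorn's lemma then yields a maximal element $\p$. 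Any $z$-ideal properly containing $\p$ and disjoint from $X$ would automatically contain $\ma$, contradicting maximality, so $\p$ is in fact maximal among \emph{all} $z$-ideals disjoint from $X$. Lemma \ref{mxkp} then shows $\p$ is $z$-prime. Since $\ma\subseteq\p$ while $r\in X$ and $\p\cap X=\emptyset$, we get $r\notin\p$, and hence $r\notin\sqrt[z]{\ma}$ by (\ref{radki}).

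The main obstacle is the second inclusion, and in particular the passage from a maximal element of the restricted poset (those $z$-ideals containing $\ma$) to an element maximal in the unrestricted sense demanded by Lemma \ref{mxkp}; this is the one step requiring a small argument rather than a direct citation. The auxiliary facts—that chain unions of $z$-ideals are $z$-ideals and remain disjoint from $X$—are immediate, the former already being recorded in the proof of Lemma \ref{exma}.
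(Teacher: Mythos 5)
Your proof is correct and follows essentially the same route as the paper: the inclusion $\mathcal{T}\subseteq\sqrt[z]{\ma}$ via Lemma \ref{rpms} applied to complements of $z$-prime ideals, and the reverse inclusion by contraposition, producing via Zorn's lemma an ideal containing $\ma$, disjoint from $X$, and maximal as such, then invoking Lemma \ref{mxkp}. The one (minor) divergence is that you run Zorn's lemma inside the poset of $z$-ideals and then upgrade restricted maximality to unrestricted maximality, whereas the paper runs it over all ideals and asserts parenthetically that the maximal element is a $z$-ideal; your variant has the advantage of matching the hypotheses of Lemma \ref{mxkp} exactly, at the cost of needing the (already recorded) fact that chain unions of $z$-ideals are $z$-ideals.
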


\begin{proof}
Suppose that  $r\in \mathcal{T}$ and $\p\in \mathrm{Spec}_z(S)$ such that $\ma\subseteq \p$. Then by Lemma \ref{rpms}, $S\setminus \p$ is a multiplicatively closed subset of $S$ and $r\notin  S\setminus \p.$ Hence $r\in \p$. Conversely, let $r\notin \mathcal{T}$. This implies that there exists a multiplicatively closed subset $X$ of $S$ such that $r\in X$ and $X\cap \ma=\emptyset.$ By Zorn's lemma, there exists a ideal (and hence a $z$-ideal)  $\p$ containing $\ma$ and maximal with respect to the property that $\p\cap X=\emptyset$. By Lemma \ref{mxkp}, $\p$ is a prime ideal and by Proposition \ref{eqp}, $\p$ is a $z$-prime ideal with $r\notin \p$.
\end{proof}

\begin{lemma}\label{mms}
Let $\ma$ be a $z$-semiprime ideal of a semiring $S$ and $x\in S\setminus \ma$. Then there exists a multiplicatively closed subset $X$ of $S$ such that $x\in X\subseteq S\setminus \ma.$
\end{lemma}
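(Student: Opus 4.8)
The plan is to take the obvious candidate, namely the set of powers $X := \{x^n \mid n \in \mathds{N}\}$ of $x$ (with the convention $x^0 = 1$). This set visibly contains $x$ and $1$, and is closed under multiplication, so it is a multiplicatively closed subset of $S$ by definition. The entire content of the lemma therefore reduces to the single claim that $X \subseteq S\setminus\ma$, i.e. that no power of $x$ lies in $\ma$. Since $\ma$ is a proper $z$-ideal (being $z$-semiprime) we have $1\notin\ma$, so $x^0=1\notin\ma$, and it remains only to treat the positive powers.

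For the positive powers I would exploit the fact that $\ma$, being a $z$-ideal, is automatically a radical ideal. Indeed, Lemma \ref{lclk}(\ref{rzi}) gives $\sqrt{\ma}\subseteq\cz(\ma)$, while Lemma \ref{lclk}(\ref{altd}) gives $\cz(\ma)=\ma$ because $\ma\in\z$; combined with the trivial inclusion $\ma\subseteq\sqrt{\ma}$, this yields $\ma=\sqrt{\ma}$. Consequently, if some positive power $x^n$ were to lie in $\ma$, then $x$ would lie in $\sqrt{\ma}=\ma$, contradicting the hypothesis $x\in S\setminus\ma$. Hence $x^n\notin\ma$ for every $n\geqslant 1$, and together with $1\notin\ma$ this establishes $X\subseteq S\setminus\ma$, completing the argument.

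Alternatively, and more in the spirit of the word ``semiprime'', one can bypass the radical observation and argue directly with the $z$-closure: if $x^n\in\ma$, then $\langle x\rangle^n=\langle x^n\rangle\subseteq\ma$, whence by Lemma \ref{lclk}(\ref{czan}) and Lemma \ref{lclk}(\ref{altd}) we get $\cz(\langle x\rangle)=\cz(\langle x\rangle^n)\subseteq\cz(\ma)=\ma$, forcing $x\in\langle x\rangle\subseteq\cz(\langle x\rangle)\subseteq\ma$, again a contradiction. Either route makes clear that the only genuine point is the behaviour of the powers of $x$ relative to $\ma$.

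I do not expect a substantive obstacle here: the apparent difficulty — namely ruling out $x^n\in\ma$ — dissolves once one notices that the $z$-ideal hypothesis already forces $\ma$ to be radical, so that the naive multiplicative set of powers of $x$ suffices. The semiprime hypothesis is what one would invoke in a purely element-wise inductive doubling argument (passing from $x^{2^k}\in\ma$ to $x^{2^{k-1}}\in\ma$ and iterating down to $x\in\ma$), but with the $z$-closure machinery of Lemma \ref{lclk} already available this stronger assumption is not actually required for the construction.
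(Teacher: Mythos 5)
Your proof is correct, and it is in fact more complete than the one in the paper. The paper's proof takes $X=\{x_1,x_2,\ldots\}$ with $x_1=x$ and $x_n=x_{n-1}x_{n-1}$, i.e.\ the iterated squares $x^{2^{n-1}}$, asserts that this set is closed under multiplication (which, as literally written, it is not: $x_ix_j=x^{2^{i-1}+2^{j-1}}$ is not of the form $x^{2^{k}}$ when $i\neq j$, and $1$ is not included even though the paper's definition of a multiplicatively closed subset requires $1\in X$), and never verifies the one substantive point, namely $X\cap\ma=\emptyset$. You instead take the full set of powers $\{x^n\mid n\in\mathds{N}\}$, which is genuinely multiplicatively closed and contains $1$, and you supply the missing disjointness argument by observing that every $z$-ideal is radical (Lemma \ref{lclk}(\ref{rzi}) combined with Lemma \ref{lclk}(\ref{altd})), so no positive power of $x$ can land in $\ma$; your alternative route via $\cz(\langle x\rangle^n)=\cz(\langle x\rangle)$ from Lemma \ref{lclk}(\ref{czan}) works equally well. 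Your closing observation is also accurate: once one knows that $z$-ideals are radical, the semiprime hypothesis is not needed at all; it would only be invoked in the element-wise doubling argument (deducing $x^{2^{k-1}}\in\ma$ from $x^{2^{k}}\in\ma$), which is presumably what the paper's construction was gesturing at but never carried out. The trade-off is that your argument leans on the $z$-closure machinery of Lemma \ref{lclk}, whereas the paper's intended argument is the standard bare-hands $n$-system construction that would survive in settings where that machinery is unavailable.
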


\begin{proof}
Define the elements of $X=\{x_1, x_2, \ldots, x_n, \ldots\}$ inductively as follows:
$
x_1:= x;$
$x_2:=x_1x_1;$
$\ldots$;
$x_n:=x_{n-1}x_{n-1};$ $\ldots$. Obviously $x\in S$ and it is also easy to see that $x_i,$ $x_j\in X$ implies that $x_ix_j\in X$.
\end{proof}

\begin{theorem}\label{spkr}
For any $z$-ideal $\ma$ of a bzi-semiring $S$, the following are equivalent.	\begin{enumerate}
		
\item\label{iksp} $\ma$ is $z$-semiprime.
		
\item\label{iinp} $\ma$ is an intersection of $z$-prime ideals of  $S$.
		
\item\label{iradi} $\ma$ is $z$-radical.
\end{enumerate}
\end{theorem}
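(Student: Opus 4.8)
The plan is to prove the cyclic chain of implications $(\ref{iksp})\Rightarrow(\ref{iinp})\Rightarrow(\ref{iradi})\Rightarrow(\ref{iksp})$, leaning on the preparatory lemmas just established. The implication $(\ref{iinp})\Rightarrow(\ref{iradi})$ is essentially formal: if $\ma=\bigcap_{k}\p_k$ is an intersection of $z$-prime ideals, then each $\p_k$ contains $\ma$, so each $\p_k$ appears in the intersection defining $\sqrt[z]{\ma}$; hence $\sqrt[z]{\ma}\subseteq\bigcap_k\p_k=\ma$, while $\ma\subseteq\sqrt[z]{\ma}$ always by Lemma \ref{radpr}(\ref{irad}). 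This yields $\ma=\sqrt[z]{\ma}$, so $\ma$ is $z$-radical.

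Next I would treat $(\ref{iradi})\Rightarrow(\ref{iksp})$. Suppose $\ma=\sqrt[z]{\ma}$ and $\mb^2\subseteq\ma$ for some $\mb\in\z$. Writing $\sqrt[z]{\ma}$ via Definition \ref{drad} as the intersection of the $z$-prime ideals $\p\supseteq\ma$, it suffices to show $\mb\subseteq\p$ for each such $\p$. Since $\mb^2\subseteq\ma\subseteq\p$ and $\p$ is $z$-prime with $\mb\in\z$, the defining property of $z$-primeness (Definition \ref{kpri}) applied to $\mb\cdot\mb\subseteq\p$ gives $\mb\subseteq\p$. Taking the intersection over all such $\p$ yields $\mb\subseteq\ma$, so $\ma$ is $z$-semiprime.

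The main work lies in $(\ref{iksp})\Rightarrow(\ref{iinp})$, and this is where Lemmas \ref{rkt} and \ref{mms} enter decisively. Given a $z$-semiprime ideal $\ma$, I claim $\ma=\bigcap\{\p\in\mathrm{Spec}_z(S)\mid\ma\subseteq\p\}=\sqrt[z]{\ma}$, which by Lemma \ref{rkt} equals the set $\mathcal{T}$ of elements $r$ every multiplicatively closed subset through which meets $\ma$. One inclusion $\ma\subseteq\sqrt[z]{\ma}$ is automatic. For the reverse, I would take $x\in S\setminus\ma$ and produce a $z$-prime ideal containing $\ma$ but omitting $x$, equivalently show $x\notin\mathcal{T}$. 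This is exactly what Lemma \ref{mms} delivers: using $z$-semiprimeness it constructs a multiplicatively closed set $X$ with $x\in X\subseteq S\setminus\ma$, so $X\cap\ma=\emptyset$, witnessing $x\notin\mathcal{T}$. Hence $\sqrt[z]{\ma}\subseteq\ma$, giving $\ma=\sqrt[z]{\ma}$, an intersection of $z$-prime ideals.

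The subtle point to watch is that Lemma \ref{mms} is stated for a general semiring while the theorem is about a \emph{bzi}-semiring, and Lemma \ref{rkt} (which identifies $\sqrt[z]{\ma}$ with $\mathcal{T}$) requires the \emph{bzi} hypothesis; so the cleanest route is to invoke Lemma \ref{rkt} to rewrite $\sqrt[z]{\ma}$ as $\mathcal{T}$ and then use Lemma \ref{mms} to characterize membership in $\mathcal{T}$, rather than re-deriving the Zorn's-lemma separation argument by hand. I expect the genuine obstacle to be bookkeeping the equivalence of ``$x\notin\mathcal{T}$'' with ``some $z$-prime ideal separates $x$ from $\ma$,'' but since Lemmas \ref{rpms}, \ref{mxkp}, and \ref{rkt} have already packaged precisely this separation machinery, the argument reduces to correctly citing them in sequence.
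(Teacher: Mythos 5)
Your proposal is correct and follows essentially the same route as the paper: the only substantive step is getting from $z$-semiprimeness to $\sqrt[z]{\ma}\subseteq\ma$ via Lemma \ref{mms} (building a multiplicatively closed set in $S\setminus\ma$) and Lemma \ref{rkt} (identifying $\sqrt[z]{\ma}$ with $\mathcal{T}$), exactly as the paper does for its implication (1)$\Rightarrow$(3). You merely traverse the cycle in the opposite direction, and your direct argument for (3)$\Rightarrow$(1) is if anything slightly cleaner than the paper's remark that an intersection of $z$-prime ideals is $z$-semiprime.
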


\begin{proof}
By Definition \ref{drad}, it follows that (3)$\Rightarrow$(2). Since the intersection of $z$-prime ideals is a $z$-prime ideal and every $z$-prime ideal is $z$-semiprime, (2)$\Rightarrow$(1) follows. What remains is to show that (1)$\Rightarrow$(3) and for that it suffices to show   $\sqrt[z]{\ma}\subseteq \ma.$ Let  $x\notin \ma$. Then $x\in S\setminus \ma$ and by Lemma \ref{mms}, there exists a multiplicatively closed subset $X$ of $S$ such that $x\in X\subseteq S\setminus \ma.$ But $X\cap \ma=\emptyset$, and hence $x\notin \sqrt[z]{\ma},$ by Lemma \ref{rkt}.
\end{proof}

\begin{corollary}
The $z$-radical $\sqrt[z]{\ma}$ of an ideal $\ma$ of a semiring $S$ is the smallest $z$-semiprime ideal of $S$ containing $\ma$.
\end{corollary}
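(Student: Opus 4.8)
The plan is to prove this corollary by combining Theorem \ref{spkr} with the monotonicity and closure properties of the $z$-radical established in Lemma \ref{radpr}. The statement asserts two things about $\sqrt[z]{\ma}$: that it \emph{is} a $z$-semiprime ideal containing $\ma$, and that it is the \emph{smallest} such ideal. A subtlety to handle first is that $\ma$ is an arbitrary ideal here, not necessarily a $z$-ideal, so strictly speaking $\sqrt[z]{\ma}$ as given by Definition \ref{drad} is the intersection of $z$-prime ideals containing $\ma$; I would note that this intersection equals $\sqrt[z]{\cz(\ma)}$, since a $z$-prime ideal contains $\ma$ if and only if it contains the smallest $z$-ideal $\cz(\ma)$ over $\ma$ (by Lemma \ref{lclk}(\ref{iclk})). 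This lets me apply the machinery of Lemma \ref{radpr} and Theorem \ref{spkr}, which are stated for $z$-ideals.

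First I would show that $\sqrt[z]{\ma}$ is $z$-semiprime. By Lemma \ref{radpr}(\ref{irad}), $\sqrt[z]{\ma}$ is a $z$-ideal containing $\ma$ (applied to $\cz(\ma)$), and it is by construction an intersection of $z$-prime ideals. By the implication (\ref{iinp})$\Rightarrow$(\ref{iksp}) of Theorem \ref{spkr}, any intersection of $z$-prime ideals is $z$-semiprime, so $\sqrt[z]{\ma}$ is $z$-semiprime and contains $\ma$, establishing the first half.

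Next I would prove minimality. Suppose $\q$ is any $z$-semiprime ideal with $\ma \subseteq \q$. Since $\q$ is $z$-semiprime, Theorem \ref{spkr} gives (\ref{iksp})$\Rightarrow$(\ref{iradi}), so $\q$ is $z$-radical, i.e.\ $\q = \sqrt[z]{\q}$. From $\ma \subseteq \q$ and the monotonicity of the $z$-radical (immediate from Definition \ref{drad}, since enlarging the ideal can only shrink the family of $z$-prime ideals containing it, hence enlarge—or rather, the intersection over a smaller family is larger; monotonicity $\sqrt[z]{\ma}\subseteq\sqrt[z]{\q}$ follows because every $z$-prime ideal containing $\q$ also contains $\ma$), we obtain $\sqrt[z]{\ma} \subseteq \sqrt[z]{\q} = \q$. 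Thus $\sqrt[z]{\ma}$ is contained in every $z$-semiprime ideal containing $\ma$, which is exactly the asserted minimality.

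The main obstacle, such as it is, is bookkeeping rather than depth: one must be careful that $\ma$ need not be a $z$-ideal, so the cleanest route is to reduce everything to $\cz(\ma)$ before invoking Theorem \ref{spkr} and Lemma \ref{radpr}, both of which are phrased for $z$-ideals. The monotonicity of $\sqrt[z]{(-)}$ is not recorded as a numbered item in Lemma \ref{radpr}, so I would either spell out the one-line argument from Definition \ref{drad} directly or remark that it is immediate from the defining intersection. Everything else is a direct application of the equivalences already proven, so the proof is short.
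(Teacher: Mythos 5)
Your proof is correct and is precisely the intended derivation: the paper states this corollary without proof as an immediate consequence of Theorem \ref{spkr}, using exactly the two implications you invoke ((\ref{iinp})$\Rightarrow$(\ref{iksp}) for membership and (\ref{iksp})$\Rightarrow$(\ref{iradi}) plus monotonicity for minimality), and your reduction to $\cz(\ma)$ is a sensible way to handle the fact that $\ma$ need not be a $z$-ideal. The only caveat worth recording is that Theorem \ref{spkr} is proved only for bzi-semirings, so your argument---like the corollary as stated in the paper---silently inherits that hypothesis for the minimality half.
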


Strongly irreducible ideals, originally introduced as primitive ideals in \cite{F49} for commutative rings and referred to as quasi-prime ideals in \cite[p.,301, Exercise 34]{B72}, were first termed `strongly irreducible' in the context of noncommutative rings in \cite{B53}. In the context of semirings, a study of these ideals can be found in \cite{G99, I56}. In this section, we introduce the concepts of $z$-irreducible and $z$-strongly irreducible ideals in semirings and explore their relationships with $z$-prime and $z$-semiprime ideals.

\begin{definition}\label{isirr} Let $S$ be a semiring.
\begin{enumerate}
		
\item A $z$-ideal $\ma$ of $S$ is called \emph{$z$-irreducible} if  $\mb\cap \mb'= \ma$ implies  $\mb= \ma$ or $\mb'= \ma,$ for all $\mb$, $\mb'\in \z$.
		
\item A $z$-ideal $\ma$ of $S$ is called \emph{$z$-strongly irreducible} if  $\mb\cap \mb'\subseteq \ma$ implies  $\mb\subseteq \ma$ or $\mb'\subseteq \ma$ for all $\mb,$ $\mb'\in \z$.
\end{enumerate}
\end{definition}

It is clear from Definition \ref{isirr} that every $z$-strongly irreducible ideal is also $z$-irreducible, and by Lemma \ref{psi}, it is straightforward to see that every $z$-prime ideal is $z$-strongly irreducible. Our expectation is that the equivalence formulation (\ref{kxkx}) holds for both $z$-irreducible and $z$-strongly irreducible ideals, and the following result confirms this expectation.

\begin{proposition}\label{eqsi}
An ideal $\ma$ of a semiring $S$ is $z$-irreducible ($z$-strongly irreducible)  if and only if $\ma$ is irreducible (strongly irreducible) and a $z$-ideal of $S$.
\end{proposition}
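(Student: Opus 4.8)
The plan is to prove each of the two biconditionals by mimicking the structure of the earlier equivalence proofs (\ref{eqp}) and (\ref{eqpp}), since the definitions of $z$-irreducible and $z$-strongly irreducible differ from their classical counterparts only by restricting the quantified ideals to lie in $\z$. One direction is essentially immediate in both cases: if $\ma$ is a $z$-ideal that is also (strongly) irreducible in the full ideal lattice $\id$, then restricting the defining implication to ideals $\mb, \mb' \in \z$ gives exactly the $z$-(strongly) irreducible condition, so that direction needs only a sentence.

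For the substantive converse I would treat the strongly irreducible case first, as it is the cleaner one. Suppose $\ma$ is $z$-strongly irreducible and take arbitrary ideals $\mb, \mb' \in \id$ with $\mb \cap \mb' \subseteq \ma$. The idea is to push everything up to the $z$-closure: applying $\cz(-)$ and using Lemma~\ref{lclk}(\ref{arbin}), which states $\cz(\ma\cap\mb)=\cz(\ma)\cap\cz(\mb)$, I get
\[
\cz(\mb)\cap \cz(\mb')=\cz(\mb\cap \mb')\subseteq \cz(\ma)=\ma,
\]
where the final equality is Lemma~\ref{lclk}(\ref{altd}) since $\ma$ is a $z$-ideal, and the inclusion is monotonicity (\ref{ijcl}). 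Now $\cz(\mb), \cz(\mb') \in \z$, so the $z$-strong-irreducibility hypothesis yields $\cz(\mb)\subseteq \ma$ or $\cz(\mb')\subseteq \ma$, whence $\mb \subseteq \cz(\mb) \subseteq \ma$ or $\mb' \subseteq \ma$ by (\ref{iclk}). This shows $\ma$ is strongly irreducible in $\id$, and since it is a $z$-ideal by hypothesis, we are done.

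For the $z$-irreducible case the strategy is the same, but the equality hypothesis $\mb\cap\mb'=\ma$ makes it slightly more delicate than the inclusion version, so I expect this to be the main obstacle. Starting from $\mb\cap\mb'=\ma$ with $\mb,\mb'\in\id$, applying $\cz$ and using (\ref{arbin}) and (\ref{altd}) gives $\cz(\mb)\cap\cz(\mb')=\cz(\ma)=\ma$. Since $\cz(\mb),\cz(\mb')\in\z$, the $z$-irreducibility of $\ma$ forces $\cz(\mb)=\ma$ or $\cz(\mb')=\ma$; say $\cz(\mb)=\ma$. The remaining task is to descend from $\cz(\mb)=\ma$ to $\mb=\ma$ itself. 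Here I would argue that $\mb\subseteq\cz(\mb)=\ma$ by (\ref{iclk}), while conversely $\ma=\mb\cap\mb'\subseteq\mb$; combining the two inclusions gives $\mb=\ma$ exactly. The symmetric case $\cz(\mb')=\ma$ gives $\mb'=\ma$ the same way, establishing that $\ma$ is irreducible in $\id$. Note that, in contrast to the prime and semiprime equivalences of Propositions~\ref{eqp} and~\ref{eqpp}, neither half invokes the product-of-closures identity (\ref{clijk}), so the hypothesis that $S$ be a \emph{bzi}-semiring is not needed here --- consistent with the proposition being stated for an arbitrary semiring $S$.
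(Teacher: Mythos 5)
Your proposal is correct and follows essentially the same route as the paper: apply the $z$-closure operator, use $\cz(\mb\cap\mb')=\cz(\mb)\cap\cz(\mb')$ together with monotonicity and $\cz(\ma)=\ma$, and then invoke the $z$-(strong) irreducibility hypothesis on the closures. The only difference is that you explicitly carry out the descent from $\cz(\mb)=\ma$ to $\mb=\ma$ in the irreducible case (via $\ma=\mb\cap\mb'\subseteq\mb\subseteq\cz(\mb)=\ma$), which the paper dismisses as a trivial change of terminology; your observation that the \emph{bzi} hypothesis is not needed is also accurate.
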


\begin{proof}
We provide a proof for $z$-strongly irreducible ideals, that for $z$-irreducible ideals requiring only a
trivial change of terminology.
Suppose that $\ma$ is a $z$-strongly irreducible ideal and $\mb$, $\mb'$ are ideals of $S$ such that $\mb\cap \mb'\subseteq \ma.$ This implies
\[ \cz(\mb)\cap \cz(\mb')=\cz(\mb\cap \mb')\subseteq \cz(\ma)=\ma,\]
where, the first equality follows by Lemma \ref{lclk}(\ref{arbin}) and the inclusion by Lemma \ref{lclk}(\ref{ijcl}). By assumption, this implies that  $ \cz(\mb)\subseteq \ma$ or $ \cz(\mb')\subseteq \ma$. By Lemma \ref{lclk}(\ref{iclk}), we now have the claim. The proof of the converse statement is obvious.
\end{proof}

It is a well-established fact (see \cite[Proposition~7.33]{G99}) that a strongly irreducible ideal in a semiring possesses the following equivalent `elementwise' property: If $x$ and $y$ are elements of $S$ such that $\langle x \rangle \cap \langle y \rangle \subseteq \ma$, then $x\in \ma$ or $y\in \ma$. The next proposition shows that a similar result holds for $z$-strongly irreducible ideals.

\begin{proposition}
\label{abi} 
An ideal $\ma$ of a semiring $S$ is  $z$-strongly irreducible if and only if \,	 $\cz(\langle x\rangle)  \cap \cz(\langle y\rangle) \subseteq  \ma$ implies  $x\in \ma$ or $y\in \ma$, for all $x$, $y\in S$.
\end{proposition}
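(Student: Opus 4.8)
The plan is to prove the two directions separately, exploiting the characterization already established in Proposition \ref{eqsi}, namely that $\ma$ is $z$-strongly irreducible if and only if $\ma$ is a $z$-ideal and strongly irreducible in the ordinary sense. The forward direction is almost immediate: if $\ma$ is $z$-strongly irreducible and $\cz(\langle x\rangle)\cap\cz(\langle y\rangle)\subseteq\ma$, then since $\cz(\langle x\rangle)$ and $\cz(\langle y\rangle)$ are $z$-ideals (by Lemma \ref{lclk}(\ref{iclk})), the defining property of $z$-strong irreducibility yields $\cz(\langle x\rangle)\subseteq\ma$ or $\cz(\langle y\rangle)\subseteq\ma$; since $x\in\langle x\rangle\subseteq\cz(\langle x\rangle)$ and likewise for $y$, I conclude $x\in\ma$ or $y\in\ma$.

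For the converse, I would suppose the elementwise condition holds and take arbitrary $z$-ideals $\mb,\mb'\in\z$ with $\mb\cap\mb'\subseteq\ma$; the goal is to show $\mb\subseteq\ma$ or $\mb'\subseteq\ma$. I would argue by contraposition: assume $\mb\nsubseteq\ma$ and $\mb'\nsubseteq\ma$, and produce a violation of the elementwise hypothesis. Pick $x\in\mb\setminus\ma$ and $y\in\mb'\setminus\ma$. Then $\langle x\rangle\subseteq\mb$ and $\langle y\rangle\subseteq\mb'$, and applying $\cz$ together with Lemma \ref{lclk}(\ref{ijcl}) and the fact that $\mb,\mb'$ are already $z$-ideals (so $\cz(\mb)=\mb$, $\cz(\mb')=\mb'$ by Lemma \ref{lclk}(\ref{altd})) gives $\cz(\langle x\rangle)\subseteq\mb$ and $\cz(\langle y\rangle)\subseteq\mb'$. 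Hence
\[
\cz(\langle x\rangle)\cap\cz(\langle y\rangle)\subseteq\mb\cap\mb'\subseteq\ma,
\]
so the elementwise hypothesis forces $x\in\ma$ or $y\in\ma$, contradicting the choices of $x$ and $y$.

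The key observation making both directions clean is that $\cz$ is monotone and that a $z$-ideal equals its own $z$-closure, so passing between principal ideals and their $z$-closures costs nothing beyond citing Lemma \ref{lclk}. I do not expect a genuine obstacle here; the only point requiring a little care is ensuring that in the converse I extract the elements $x,y$ correctly and track that $\cz(\langle x\rangle)\subseteq\cz(\mb)=\mb$ rather than merely $\langle x\rangle\subseteq\mb$, since the elementwise condition is phrased in terms of the $z$-closures of the principal ideals, not the principal ideals themselves. This is exactly why the relation $\cz(\langle x\rangle)\cap\cz(\langle y\rangle)\subseteq\mb\cap\mb'$ must be obtained via monotonicity of $\cz$ applied to the inclusions of the generated ideals.
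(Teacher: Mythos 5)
Your proposal is correct and follows essentially the same route as the paper: the forward direction applies the definition of $z$-strong irreducibility to the $z$-ideals $\cz(\langle x\rangle)$ and $\cz(\langle y\rangle)$, and the converse is the same contrapositive argument, picking $x\in\mb\setminus\ma$, $y\in\mb'\setminus\ma$ and using $\cz(\langle x\rangle)\subseteq\cz(\mb)=\mb$ via monotonicity and Lemma \ref{lclk}(\ref{altd}). The only cosmetic difference is that the paper's forward direction starts from the weaker hypothesis $\langle x\rangle\cap\langle y\rangle\subseteq\ma$ and passes through Lemma \ref{lclk}(\ref{arbin}), whereas you work directly from the hypothesis as stated; both are fine.
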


\begin{proof}
Let $\ma$ be a $z$-strongly irreducible ideal of $S$ and let $\langle x\rangle  \cap \langle y\rangle \subseteq  \ma$, for all $x,$ $y\in S .$ This implies
\[\cz(\langle x\rangle \cap \langle y\rangle)=\cz(\langle x\rangle)\cap  \cz(\langle y\rangle) \subseteq \cz (\ma)=\ma,\]
where the first equality follows from Lemma \ref{lclk}(\ref{arbin}).
Since $\ma$ is  $z$-strongly irreducible, we must have  $x\in \langle x\rangle\subseteq  \cz(\langle x\rangle) \subseteq \ma$ or  $y\in \langle y\rangle\subseteq  \cz(\langle y\rangle) \subseteq \ma$. To show the converse, suppose that $\ma$ is a $z$-ideal that is not $z$-strongly irreducible. Then there are $z$-ideals $\mb$ and $\mb'$ satisfy $\mb\cap \mb'\subseteq \ma,$ but $\mb\nsubseteq \ma$ and $\mb'\nsubseteq \ma.$ This implies there exist $x\in \mb$ and $y\in \mb'$ such that \[\cz(\langle x\rangle)  \cap \cz(\langle y\rangle) \subseteq \cz(\mb)\cap \cz(\mb')=\mb\cap \mb'\subseteq   \ma,\] but $x\notin \ma$ and $y\notin \ma$.
\end{proof}

In Theorem \ref{spkr}, we observed that a $z$-radical ideal can be expressed as the intersection of $z$-prime ideals containing it. Now, we will show that any proper $z$-ideal can be similarly represented in terms of $z$-irreducible ideals, and this extends \cite[Proposition 7.34]{G99}. However, before proceeding, let's establish a lemma.

\begin{lemma}\label{lir}
Suppose that $S$ is a semiring. Let $0\neq x\in S $ and $\ma$	be a proper $z$-ideal of  $S$ such that $x\notin \ma.$ Then there exists a $z$-irreducible ideal $\mb$ of $S$ such that $\ma\subseteq \mb$
and $x\notin \mb$.	
\end{lemma}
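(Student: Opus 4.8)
The plan is to use a Zorn's lemma argument on the poset of $z$-ideals that contain $\ma$ but exclude $x$, then show that a maximal element of this poset is necessarily $z$-irreducible. First I would define the collection
\[
\mathcal{F}:=\left\{\mz\in\z \mid \ma\subseteq\mz \text{ and } x\notin\mz\right\}.
\]
This set is nonempty since $\ma\in\mathcal{F}$ by hypothesis. Next I would verify that $\mathcal{F}$, ordered by inclusion, is closed under unions of chains: if $\{\mz_i\}_{i\in I}$ is a chain in $\mathcal{F}$, then $\bigcup_{i\in I}\mz_i$ is again an ideal (the chain condition guarantees closure under addition), it is a $z$-ideal (the verification that a directed union of $z$-ideals is a $z$-ideal is routine using the alternative definition in Proposition \ref{ald}), it contains $\ma$, and it avoids $x$ since each $\mz_i$ does. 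Hence every chain has an upper bound in $\mathcal{F}$, and Zorn's lemma furnishes a maximal element $\mb\in\mathcal{F}$. By construction $\ma\subseteq\mb$ and $x\notin\mb$, so it remains only to prove that $\mb$ is $z$-irreducible.

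For the $z$-irreducibility, I would argue by contraposition. Suppose $\mb$ is not $z$-irreducible; then there exist $z$-ideals $\mathfrak{c}$, $\mathfrak{c}'$ with $\mathfrak{c}\cap\mathfrak{c}'=\mb$ but $\mathfrak{c}\neq\mb$ and $\mathfrak{c}'\neq\mb$. Since $\mb=\mathfrak{c}\cap\mathfrak{c}'\subseteq\mathfrak{c}$ and $\mathfrak{c}\neq\mb$, we have the strict inclusion $\mb\subsetneq\mathfrak{c}$, and likewise $\mb\subsetneq\mathfrak{c}'$. By the maximality of $\mb$ in $\mathcal{F}$, neither $\mathfrak{c}$ nor $\mathfrak{c}'$ can belong to $\mathcal{F}$; since both are $z$-ideals containing $\ma$, the only way they can fail membership is $x\in\mathfrak{c}$ and $x\in\mathfrak{c}'$. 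But then $x\in\mathfrak{c}\cap\mathfrak{c}'=\mb$, contradicting $x\notin\mb$. Therefore $\mb$ is $z$-irreducible, which completes the proof.

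The main obstacle I anticipate is the verification that the directed union of a chain of $z$-ideals is again a $z$-ideal, and more subtly whether the statement should be read with the equality definition of $z$-irreducibility (Definition \ref{isirr}(1)) rather than strong irreducibility. The equality version is exactly what makes the contrapositive work cleanly: from $\mathfrak{c}\cap\mathfrak{c}'=\mb$ with both factors strictly larger than $\mb$, one forces $x$ into both and hence into their intersection. One should double-check that ``strictly larger'' is genuinely available — it is, because $\mathfrak{c}=\mb$ is precisely the excluded case in the definition of $z$-irreducible, so assuming non-$z$-irreducibility hands us $\mathfrak{c},\mathfrak{c}'\neq\mb$ for free. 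The hypothesis $x\neq 0$ does not appear to be used in this argument and is presumably included only to guarantee that a suitable $\ma$ exists or for uniformity with later results.
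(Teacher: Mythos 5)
Your proposal is correct and takes essentially the same route as the paper's proof: a Zorn's lemma argument on the poset of $z$-ideals containing $\ma$ and avoiding $x$, followed by the observation that a decomposition $\mb=\mathfrak{c}\cap\mathfrak{c}'$ with both factors strictly larger than $\mb$ would force $x$ into both factors and hence into $\mb$. Your write-up is in fact slightly more careful than the paper's (which elides the case distinction $\mathfrak{c}=\mb$ and the verification that a chain union of $z$-ideals is a $z$-ideal), and your remark that the hypothesis $x\neq 0$ is never used is accurate.
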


\begin{proof}
Let
$\{\mb_{i}\}_{i \in I}$ be a chain of $z$-ideals of $S$ such that $x\notin \mb_{i}\supseteq \ma$, for all $i \in I$. Then
\[x\notin \bigcup_{i \in I} \mb_{i}\supseteq \ma.
\]
By Zorn's lemma, there exists a maximal element $\mb$ of this chain. Suppose that  $\mb=\mb'\cap \mb''.$ By the maximality condition of $\mb$, we must have $x\in \mb'$ and $x\in \mb'',$ and hence $x\in \mb'\cap \mb''=\mb,$ a contradiction. Therefore, $\mb$ is the required $z$-irreducible ideal.
\end{proof}

\begin{proposition}\label{rpir}
If $\ma$ is a proper $z$-ideal of a semiring $S$, then $\ma=\bigcap_{\ma\subseteq \mb} J$, where $J$ is a $z$-irreducible ideal of $S$.
\end{proposition}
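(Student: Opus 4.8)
The plan is to prove the two inclusions of the asserted equality $\ma = \bigcap_{\ma \subseteq \mb} J$, where the intersection runs over all $z$-irreducible ideals $J$ containing $\ma$. The inclusion $\ma \subseteq \bigcap_{\ma \subseteq \mb} J$ is immediate, since $\ma$ is contained in each $z$-irreducible ideal $J$ appearing in the intersection, so $\ma$ is contained in their intersection. (Note also that the family of such $J$ is nonempty: $\ma$ itself is proper, and if $\ma$ were maximal among proper $z$-ideals one can produce a $z$-irreducible ideal containing it; more directly, the reverse inclusion argument below exhibits witnesses.)

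The substance is the reverse inclusion $\bigcap_{\ma \subseteq \mb} J \subseteq \ma$, which I would prove by contraposition. Suppose $x \notin \ma$; I must find a single $z$-irreducible ideal $\mb$ with $\ma \subseteq \mb$ and $x \notin \mb$, for then $x \notin \bigcap_{\ma \subseteq \mb} J$. This is exactly the content of Lemma \ref{lir}, provided $x \neq 0$. Indeed, applying Lemma \ref{lir} to the proper $z$-ideal $\ma$ and the element $x$ yields a $z$-irreducible ideal $\mb$ with $\ma \subseteq \mb$ and $x \notin \mb$, so $x$ is excluded from the intersection. Thus every element outside $\ma$ is outside the intersection, giving $\bigcap_{\ma\subseteq\mb} J \subseteq \ma$.

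The one point requiring a little care is the hypothesis $0 \neq x$ in Lemma \ref{lir}. I would dispatch the case $x = 0$ separately: since $\ma$ is an ideal it contains $0$, so if $x \notin \ma$ then automatically $x \neq 0$, and the nonzero hypothesis of Lemma \ref{lir} is satisfied for free. Hence no genuine obstacle arises here. I therefore expect the main (though modest) step to be simply the correct invocation of Lemma \ref{lir}, together with observing that the family of $z$-irreducible ideals containing $\ma$ is nonempty so that the intersection is meaningful. Putting the pieces together, the combination of the trivial inclusion with the contrapositive application of Lemma \ref{lir} yields the representation, completing the proof and thereby generalizing \cite[Proposition 7.34]{G99} to the setting of $z$-ideals.
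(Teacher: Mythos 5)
Your proposal is correct and follows essentially the same route as the paper: both reduce the nontrivial inclusion to Lemma \ref{lir}, producing for each $x\notin\ma$ a $z$-irreducible ideal containing $\ma$ but excluding $x$. Your explicit handling of the $x=0$ hypothesis is a small point of care the paper leaves implicit, but it changes nothing substantive.
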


\begin{proof}
By Lemma \ref{lir}, there exists a $z$-irreducible ideal $\mb$ of  $S$ such that $\ma\subseteq \mb$. Suppose that  \[\mb'=\bigcap_{\ma\subseteq \mb} \left\{\mb\mid \mb\;\text{is $z$-irreducible}\right\}.\]
Then $\ma\subseteq \mb'$. We claim that $\mb'=\ma.$   If $\mb'\neq \ma$, then there exists an $x\in \mb'\setminus \ma$, and by Lemma \ref{lir}, there exists a $z$-irreducible ideal $\mb''$ such that $x\notin \mb''\supseteq \ma,$ a contradiction.
\end{proof}

As anticipated at the outset of this section, the following result demonstrates the connections between prime-type and irreducible-type $z$-ideals, and it partially extends \cite[Proposition 7.36]{G99}

\begin{proposition}\label{vpss}
A $z$-ideal of a bzi-semiring is $z$-prime if and only if it is $z$-semiprime and $z$-strongly irreducible.
\end{proposition}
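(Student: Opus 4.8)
The plan is to prove both directions of the biconditional, exploiting the equivalences already established in this section. The forward direction is immediate: if $\ma$ is $z$-prime, then since every $z$-prime ideal is $z$-semiprime (a $z$-prime ideal satisfies $\mb^2\subseteq\ma\Rightarrow\mb\subseteq\ma$ as a special case of the defining implication with $\mb=\mb'$), and since we noted right after Definition \ref{isirr} that every $z$-prime ideal is $z$-strongly irreducible (via Lemma \ref{psi}), we conclude $\ma$ is both $z$-semiprime and $z$-strongly irreducible. So essentially no work is needed there.

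The substance is the converse. First I would assume $\ma$ is $z$-semiprime and $z$-strongly irreducible, and aim to verify the $z$-prime condition: $\mb\mb'\subseteq\ma$ with $\mb,\mb'\in\z$ forces $\mb\subseteq\ma$ or $\mb'\subseteq\ma$. The natural strategy is to convert the product hypothesis $\mb\mb'\subseteq\ma$ into an intersection hypothesis $\mb\cap\mb'\subseteq\ma$, since the latter is exactly what $z$-strong irreducibility consumes. The bridge is Lemma \ref{lclk}(\ref{arbin}), which gives $\cz(\mb\mb')=\cz(\mb\cap\mb')=\cz(\mb)\cap\cz(\mb')$. Since $\mb,\mb'$ are already $z$-ideals and $\ma$ is a $z$-ideal, applying $\cz$ to $\mb\mb'\subseteq\ma$ yields $\cz(\mb)\cap\cz(\mb')=\cz(\mb\mb')\subseteq\cz(\ma)=\ma$, i.e. $\mb\cap\mb'\subseteq\ma$.

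The main obstacle is that $z$-strong irreducibility alone would conclude directly here, so the $z$-semiprime hypothesis must be doing real work, which signals I have the wrong reduction. The correct reading: $z$-strong irreducibility handles intersections $\mb\cap\mb'\subseteq\ma$ directly, but the product inclusion $\mb\mb'\subseteq\ma$ does \emph{not} in general give $\mb\cap\mb'\subseteq\ma$ (only the reverse, $\mb\mb'\subseteq\mb\cap\mb'$, holds by Lemma \ref{psi}). So the genuine argument is to leverage $z$-semiprimeness to pass from the product to the intersection. The cleanest route uses Theorem \ref{spkr}: a $z$-semiprime ideal is $z$-radical, hence $\ma=\sqrt[z]{\ma}$ is an intersection of $z$-prime ideals. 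Given $\mb\mb'\subseteq\ma\subseteq\p$ for each such $z$-prime $\p$, one would like to conclude $\mb\cap\mb'\subseteq\p$; indeed $\mb\cap\mb'$ contains no power information, but since $\ma$ is $z$-semiprime, $\cz((\mb\cap\mb')^2)=\cz(\mb\cap\mb')$ and $(\mb\cap\mb')^2\subseteq\mb\mb'\subseteq\ma$ forces $\mb\cap\mb'\subseteq\ma$ via the $z$-semiprime implication (here is where $z$-semiprimeness is essential and where $z$-strong irreducibility could not substitute).

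Concretely, I would argue: set $\mathfrak{d}=\mb\cap\mb'$, a $z$-ideal by Proposition \ref{epzi}(\ref{ainz}); then $\mathfrak{d}^2\subseteq\mb\mb'\subseteq\ma$ by Lemma \ref{psi}, so $z$-semiprimeness (Definition \ref{kspr}) gives $\mathfrak{d}=\mb\cap\mb'\subseteq\ma$; finally $z$-strong irreducibility (Definition \ref{isirr}(2)) gives $\mb\subseteq\ma$ or $\mb'\subseteq\ma$, which is precisely $z$-primeness. This keeps both hypotheses load-bearing and requires only results already in the excerpt, so I expect the whole converse to be short once the role of each hypothesis is correctly identified; the only subtle point to get right is that it is $z$-semiprimeness, not $z$-strong irreducibility, that converts products into intersections.
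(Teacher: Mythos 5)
Your final argument is exactly the paper's proof: for the converse you set $\mathfrak{d}=\mb\cap\mb'$, note $\mathfrak{d}^2\subseteq\mb\mb'\subseteq\ma$, invoke $z$-semiprimeness to get $\mb\cap\mb'\subseteq\ma$, and finish with $z$-strong irreducibility, while the forward direction is handled by the same routine observations. The intermediate detours (the $\cz$-based reduction and the appeal to Theorem \ref{spkr}) are correctly discarded, so the proposal matches the paper's approach.
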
 

\begin{proof}
Let $\p$ be a $z$-prime ideal of a semiring $S$. Then by Proposition \ref{eqp}, $\p$ is a $z$-ideal and a prime ideal of  $S$. This implies that $\p$ is $z$-semiprime by Proposition \ref{eqpp}. From Lemma \ref{psi} and Proposition \ref{eqsi}, it follows that $\p$ is also $z$-strongly irreducible. Conversely, let $\p$ be a $z$-semiprime and $z$-strongly irreducible ideal. Suppose that $\ma$, $\mb\in \z $ satisfying $\ma\mb\subseteq \p$. Then 
\[(\ma\cap \mb)^2\subseteq \ma\mb\subseteq \p.\]
Since $\p$ is $z$-semiprime, this implies $\ma\cap \mb\subseteq \p$. But $\p$ is also $z$-strongly irreducible, and so $\ma\subseteq \p$ or $\mb\subseteq \p.$
\end{proof}

For commutative rings, it is a known result (see \cite[Theorem 2.1(ii)]{A08}) that every proper ideal is contained within a minimal strongly irreducible ideal. Interestingly, this property also extends to $z$-strongly irreducible ideals of semirings.

\begin{proposition}
Every $z$-proper ideal of a semiring is contained in a minimal $z$-strongly irreducible ideal.
\end{proposition}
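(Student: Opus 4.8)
The plan is to run Zorn's lemma on the family of $z$-strongly irreducible ideals that contain the given proper $z$-ideal $\ma$, ordered by reverse inclusion, and extract a maximal element. Concretely, let $\mathcal{C}$ denote the set of $z$-strongly irreducible ideals $\mb$ of $S$ with $\ma\subseteq\mb$, ordered by reverse inclusion, so that a maximal element of $\mathcal{C}$ is exactly an inclusion-minimal $z$-strongly irreducible ideal containing $\ma$. It then suffices to check the two Zorn hypotheses: $\mathcal{C}\neq\emptyset$, and every chain in $\mathcal{C}$ has an upper bound in $\mathcal{C}$ (equivalently, every subfamily of $\mathcal{C}$ totally ordered by inclusion has its intersection back in $\mathcal{C}$).

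For nonemptiness I would push $\ma$ up into a $z$-maximal ideal $\m$ using Lemma \ref{exma}. By Proposition \ref{eqsm}, $\m$ is a maximal $z$-ideal, hence a maximal ideal of $S$, and therefore prime by \cite[Corollary 7.13]{G99}. Such an $\m$ is $z$-strongly irreducible: if $\mb\cap\mb'\subseteq\m$ with $\mb,\mb'\in\z$, then Lemma \ref{psi} gives $\mb\mb'\subseteq\mb\cap\mb'\subseteq\m$, and primeness of $\m$ forces $\mb\subseteq\m$ or $\mb'\subseteq\m$. Thus $\m\in\mathcal{C}$.

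The main obstacle is the chain condition, i.e.\ showing that the intersection $\mb:=\bigcap_{i\in I}\mb_i$ of a family $\{\mb_i\}_{i\in I}\subseteq\mathcal{C}$ that is totally ordered by inclusion is again $z$-strongly irreducible. That $\mb$ is a $z$-ideal is Proposition \ref{epzi}(\ref{ainz}), it contains $\ma$ trivially, and it is proper since it sits inside any $\mb_i$. For strong irreducibility I would argue by contradiction: assume $\mc\cap\mc'\subseteq\mb$ with $\mc,\mc'\in\z$ but $\mc\nsubseteq\mb$ and $\mc'\nsubseteq\mb$, and choose $x\in\mc\setminus\mb$, $y\in\mc'\setminus\mb$. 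Because $x,y\notin\bigcap_i\mb_i$, some member excludes $x$ and some member excludes $y$; since the family is a chain, the smaller of these two members, call it $\mb_0$, excludes both $x$ and $y$. Then $\mc\cap\mc'\subseteq\mb\subseteq\mb_0$ with $\mb_0$ $z$-strongly irreducible, so $\mc\subseteq\mb_0$ or $\mc'\subseteq\mb_0$, contradicting $x\in\mc\setminus\mb_0$ and $y\in\mc'\setminus\mb_0$. The crux is precisely this use of total ordering: it is what allows a single chain member to witness the failure simultaneously for $x$ and $y$, reducing the problem to the already-known strong irreducibility of one $\mb_i$.

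With both hypotheses in hand, Zorn's lemma produces a maximal element of $\mathcal{C}$, which is the desired minimal $z$-strongly irreducible ideal containing $\ma$. I would note that the whole argument remains valid over an arbitrary semiring, never invoking the \emph{bzi} hypothesis, since each ingredient (Lemma \ref{exma}, Proposition \ref{eqsm}, Lemma \ref{psi}, Proposition \ref{epzi}(\ref{ainz}), and the semiring fact that maximal ideals are prime) holds unconditionally.
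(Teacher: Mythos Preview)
Your proof is correct and follows the same Zorn's-lemma strategy as the paper. The paper's own argument is briefer: it establishes nonemptiness via the route maximal $\Rightarrow$ strongly irreducible $\Rightarrow$ $z$-strongly irreducible (invoking Proposition~\ref{eqsi} rather than your direct primality argument with Lemma~\ref{psi}), and then simply writes ``by applying Zorn's lemma'' without verifying the chain condition. You, by contrast, actually prove that the intersection of a totally ordered family of $z$-strongly irreducible ideals is again $z$-strongly irreducible, and your contradiction argument---using the total order to locate a single $\mb_0$ that omits both witnesses $x$ and $y$---is exactly the step needed and is correct. So your proposal is not merely a rephrasing of the paper's proof but a genuine completion of it; the paper treats the chain condition as routine, whereas you supply the details.
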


\begin{proof}
Let $\ma$ be a proper $z$-ideal of a semiring $S$. Consider the chain $(\mathcal{E}, \subseteq)$,  where \[\mathcal{E}=\{J\mid \ma\subseteq \mb,\;\mb\;\text{is $z$-strongly irreducible}\}.\]
Since every maximal ideal of a semiring $S$ is strongly irreducible, by Proposition \ref{eqsm} and Proposition \ref{eqsi}, it follows that every $z$-maximal ideal is also $z$-strongly irreducible. Additionally, as shown in Lemma \ref{exma}, every proper $z$-ideal is contained in a $z$-maximal ideal. Therefore, the set $\mathcal{E}$ is nonempty. By applying Zorn's lemma, we conclude that $\mathcal{E}$ has a minimal element, which corresponds to our desired minimal $z$-strongly irreducible ideal.
\end{proof}

The following result demonstrates that when all $z$-ideals of a semiring are $z$-strongly irreducible, the proof of this fact is evident. This result generalizes \cite[Lemma 3.5]{A08}.

\begin{proposition}
Every $z$-ideal of a semiring $S$ is $z$-strongly irreducible if and only if  $\z $ is totally ordered. 
\end{proposition}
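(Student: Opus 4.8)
The plan is to prove both directions directly from the definition of $z$-strong irreducibility (Definition \ref{isirr}(2)), exploiting the fact that intersection of two $z$-ideals is again a $z$-ideal (Lemma \ref{lclk}(\ref{arbin}) gives $\cz(\ma)\cap\cz(\mb)\in\z$, and by Lemma \ref{lclk}(\ref{altd}) the intersection of two $z$-ideals is a $z$-ideal). The key observation tying the two notions together is the elementary lattice-theoretic fact that \emph{for any two $z$-ideals $\ma,\mb$, the intersection $\ma\cap\mb$ is itself a $z$-ideal contained in both}, so $z$-strong irreducibility can be tested against this intersection.

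First I would prove the nontrivial direction, that if every $z$-ideal is $z$-strongly irreducible then $\z$ is totally ordered. Take any two $z$-ideals $\mb,\mb'\in\z$. Their intersection $\mb\cap\mb'$ is a $z$-ideal, and since $\mb\cap\mb'\subseteq\mb\cap\mb'$ trivially, applying the hypothesis that $\mb\cap\mb'$ is $z$-strongly irreducible to itself yields $\mb\subseteq\mb\cap\mb'$ or $\mb'\subseteq\mb\cap\mb'$. The first gives $\mb\subseteq\mb'$ and the second gives $\mb'\subseteq\mb$; either way $\mb$ and $\mb'$ are comparable, so $\z$ is totally ordered. This is the cleanest route: rather than picking an arbitrary $z$-ideal, one applies $z$-strong irreducibility of the \emph{meet} to the containment $\mb\cap\mb'\subseteq\mb\cap\mb'$.

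For the converse, suppose $\z$ is totally ordered and let $\ma$ be an arbitrary $z$-ideal. To check $z$-strong irreducibility, suppose $\mb\cap\mb'\subseteq\ma$ for $z$-ideals $\mb,\mb'$. By total ordering, either $\mb\subseteq\mb'$ or $\mb'\subseteq\mb$; in the first case $\mb\cap\mb'=\mb$, so $\mb\subseteq\ma$, and symmetrically in the second case $\mb'\subseteq\ma$. Hence $\ma$ is $z$-strongly irreducible.

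I do not anticipate a genuine obstacle here; the statement is essentially a restatement of the order-theoretic characterization of meet-irreducibility in a totally ordered lattice. The only subtlety worth stating explicitly is the closure of $\z$ under finite intersection (Lemma \ref{lclk}(\ref{arbin}) together with Lemma \ref{lclk}(\ref{altd})), which guarantees $\mb\cap\mb'\in\z$ so that the defining condition of $z$-strong irreducibility is applicable to the members of $\z$ under consideration.
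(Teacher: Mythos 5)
Your proof is correct; the paper itself gives no argument for this proposition (it declares the proof ``evident''), and what you have written is exactly the standard order-theoretic argument the author is alluding to: test $z$-strong irreducibility of $\mb\cap\mb'$ against the trivial containment $\mb\cap\mb'\subseteq\mb\cap\mb'$ for one direction, and use that a totally ordered family has $\mb\cap\mb'\in\{\mb,\mb'\}$ for the other. The only cosmetic point is that closure of $\z$ under intersection is most directly Proposition \ref{epzi}(\ref{ainz}) rather than the detour through Lemma \ref{lclk}.
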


The next theorem partially characterizes arithmetical semirings, and it extends \cite[Theorem 7]{I56}.

\begin{theorem}\label{pcas}
In a arithmetical semiring $S$, a $z$-ideal of a semiring $S$ is
$z$-irreducible if and only if it is $z$-strongly irreducible. Conversely, 
if a $z$-irreducible ideal of a semiring $S$ is $z$-strongly
irreducible, then $S$ is arithmetical.
\end{theorem}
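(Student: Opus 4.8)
The plan is to prove the two implications of Theorem~\ref{pcas} separately, exploiting the equivalence Proposition~\ref{eqsi} together with Lemma~\ref{lclk} to reduce everything to distributivity of the lattice $\id$. First I would establish the forward direction, that in an arithmetical semiring every $z$-irreducible ideal is $z$-strongly irreducible. Since the reverse containment (every $z$-strongly irreducible ideal is $z$-irreducible) is immediate from Definition~\ref{isirr}, this gives the stated equivalence. So let $\ma$ be a $z$-irreducible ideal and suppose $\mb\cap \mb'\subseteq \ma$ for some $\mb$, $\mb'\in \z$. The idea is to feed the two $z$-ideals $\mb+\ma$ and $\mb'+\ma$ into the $z$-irreducibility hypothesis: using distributivity of $\id$ one computes
\[(\mb+\ma)\cap(\mb'+\ma)=(\mb\cap \mb')+\ma\subseteq \ma,\]
and since $\ma$ is contained in each of $\mb+\ma$ and $\mb'+\ma$, the intersection actually equals $\ma$ after applying $\cz(-)$. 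Care is needed here: the sum of two $z$-ideals need not be a $z$-ideal, so I would apply $\cz(-)$ throughout and use Lemma~\ref{lclk}(\ref{cabcc}) and Lemma~\ref{lclk}(\ref{arbin}) to keep everything inside $\z$, arriving at $\cz(\mb+\ma)\cap \cz(\mb'+\ma)=\ma$. Then $z$-irreducibility forces $\cz(\mb+\ma)=\ma$ or $\cz(\mb'+\ma)=\ma$, i.e. $\mb\subseteq \ma$ or $\mb'\subseteq \ma$, which is precisely $z$-strong irreducibility.

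For the converse I would argue contrapositively: assuming $\id$ is \emph{not} distributive, I must produce a $z$-ideal that is $z$-irreducible but not $z$-strongly irreducible. The cleanest route is to work at the level of the lattice $\z$ (which by Lemma~\ref{lclk}(\ref{ainz}) is closed under arbitrary intersections and, via $\cz(-)$, carries a join operation $\mb\vee \mb'=\cz(\mb+\mb')$). I would invoke the characterization that a lattice is distributive iff it contains no copy of the diamond $M_3$ or pentagon $N_5$; failure of distributivity in $\id$, transported through $\cz(-)$, yields three $z$-ideals $\mb$, $\mb'$, $\mb''$ witnessing a nonmodular or nondistributive configuration, from which one extracts a $z$-ideal $\ma$ with $\mb\cap \mb'\subseteq \ma$ yet $\mb\nsubseteq \ma$ and $\mb'\nsubseteq \ma$, while $\ma$ remains $z$-irreducible because no genuine meet-decomposition $\ma=\mb\cap \mb'$ (equality, not mere containment) exists. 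Translating the lattice-theoretic witnesses back into honest $z$-ideals and verifying $z$-irreducibility of the constructed $\ma$ is the step I expect to be the main obstacle, since one must ensure the configuration survives the passage between $\id$ and $\z$.

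The hard part throughout is bookkeeping the distinction between $\id$ and $\z$: sums and joins differ between the two lattices, and the hypothesis ``$S$ arithmetical'' is a statement about $\id$, not directly about $\z$. I would therefore front-load the observation that $\cz(-)\colon \id\to \z$ is a surjective join-preserving, meet-preserving (by Lemma~\ref{lclk}(\ref{arbin})) closure map, so that distributivity of $\id$ descends to distributivity of $\z$ and, conversely, a distributivity failure in $\id$ can be lifted to one in $\z$ on suitable representatives. With that functorial dictionary in place, both directions become short lattice-theoretic arguments, and the only semiring-specific inputs are Lemma~\ref{lclk}(\ref{cabcc}), Lemma~\ref{lclk}(\ref{arbin}), and Proposition~\ref{eqsi}.
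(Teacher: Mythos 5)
Your forward direction is correct and is a genuinely different route from the paper's. The paper disposes of both halves by citing Is\'{e}ki (\cite[Theorems 3 and 7]{I56}) and filtering the statements through Proposition \ref{eqsi}; you instead give a self-contained lattice computation. The chain $(\mb+\ma)\cap(\mb'+\ma)=(\mb\cap\mb')+\ma=\ma$ is valid in the distributive lattice $\id$, applying $\cz(-)$ and Lemma \ref{lclk}(\ref{arbin}) turns it into $\cz(\mb+\ma)\cap\cz(\mb'+\ma)=\cz(\ma)=\ma$ with both intersectands in $\z$, and $z$-irreducibility then gives $\mb\subseteq\cz(\mb+\ma)=\ma$ (or the symmetric conclusion). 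This buys independence from the external reference at no real cost, and the $\cz$-bookkeeping you worry about is handled exactly by the lemmas you name.

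The converse, however, has a genuine gap, and the step you flag as ``the main obstacle'' is not a technicality but an impassable one. Your plan is to lift a distributivity failure of $\id$ to a configuration of $z$-ideals; but $\cz(-)\colon\id\to\z$ is a surjective lattice homomorphism (meets by Lemma \ref{lclk}(\ref{arbin}), joins via $\cz(\ma+\mb)$), and a homomorphic image of a non-distributive lattice can perfectly well be distributive, so nothing forces the failure to survive the passage to $\z$. Concretely, take a local semiring whose ideal lattice contains a copy of $M_3$ (the ring $k[x,y]/\langle x,y\rangle^2$ will do): by Proposition \ref{epzi}(3) the only proper $z$-ideal is the maximal ideal, so $\z$ is a two-element chain, every $z$-ideal is vacuously $z$-strongly irreducible, yet $S$ is not arithmetical. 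Hence no argument that only inspects $\z$ can recover distributivity of $\id$, and your contrapositive cannot be completed. It is worth noting that the paper's own one-line proof of the converse --- Is\'{e}ki's Theorem 7 combined with Proposition \ref{eqsi} --- stumbles on the same point, since Is\'{e}ki's hypothesis quantifies over all irreducible ideals while the theorem's hypothesis constrains only the $z$-ideals; the converse as stated needs either an additional hypothesis (for instance that every ideal of $S$ is a $z$-ideal) or a reformulation.
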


\begin{proof}
It has been shown in \cite[Theorem 3]{I56} that in an arithmetical semiring, irreducible and strongly irreducible ideals are equivalent. Thanks to Proposition \ref{eqsi}, we have then our claim. For the converse, \cite[Theorem 7]{I56} says that if irreducibility implies strongly irreducibility, then the semiring is arithmetical. Once again, applying Proposition \ref{eqsi} on this result, we get the converse.
\end{proof}

\begin{corollary}
In an arithmetical semiring, any $z$-ideal is the intersection of all $z$-strongly irreducible ideals containing it.
\end{corollary}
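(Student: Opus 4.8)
The plan is to combine the characterization result just proved in Theorem~\ref{pcas} with the representation theorem for $z$-ideals in terms of $z$-irreducible ideals. The statement asserts that in an arithmetical semiring, any $z$-ideal equals the intersection of all $z$-strongly irreducible ideals containing it, so the natural strategy is to reduce this to the already-established representation by $z$-irreducible ideals and then upgrade ``$z$-irreducible'' to ``$z$-strongly irreducible'' using the equivalence of these two notions in the arithmetical setting.

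First I would fix a proper $z$-ideal $\ma$ of the arithmetical semiring $S$. By Proposition~\ref{rpir}, we already know that $\ma=\bigcap_{\ma\subseteq \mb} \mb$, where the intersection runs over all $z$-irreducible ideals $\mb$ of $S$ containing $\ma$. The key observation is that Theorem~\ref{pcas} guarantees that in an arithmetical semiring a $z$-ideal is $z$-irreducible if and only if it is $z$-strongly irreducible. Consequently the indexing family of $z$-irreducible ideals containing $\ma$ coincides \emph{as a set} with the family of $z$-strongly irreducible ideals containing $\ma$, so the two intersections are literally the same set-theoretic intersection. This yields
\[
\ma=\bigcap_{\ma\subseteq \mb}\left\{\mb\mid \mb\;\text{is $z$-strongly irreducible}\right\},
\]
which is exactly the asserted equality.

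I should also handle the trivial case separately: if $\ma=S$, the statement about a representation via \emph{proper} ideals needs the usual convention that an empty intersection is $S$, so there is nothing to prove there, and Proposition~\ref{rpir} is stated for proper $z$-ideals in any case. For a general (possibly improper) $z$-ideal the claim follows from the proper case together with this convention.

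Since this is purely a matter of substituting the equivalence from Theorem~\ref{pcas} into the representation from Proposition~\ref{rpir}, I do not anticipate any genuine obstacle; the only point requiring care is confirming that Theorem~\ref{pcas} gives a class equality (every $z$-irreducible ideal is $z$-strongly irreducible and conversely) rather than merely an equivalence for a single fixed ideal, so that the two index families agree. Granting that, the corollary is immediate.
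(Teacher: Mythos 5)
Your proof is correct and is exactly the argument the paper intends: the corollary is stated without proof as an immediate consequence of Proposition~\ref{rpir} (representation of a proper $z$-ideal as the intersection of the $z$-irreducible ideals containing it) combined with Theorem~\ref{pcas} (in an arithmetical semiring the classes of $z$-irreducible and $z$-strongly irreducible ideals coincide). Your additional care about the class equality versus a per-ideal equivalence, and about the improper case, is sensible but does not change the substance.
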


The next result shows when all ideals of an arithmetical semiring are $z$-ideals, and it extends \cite[Lemma 2.12]{MC19}.

\begin{theorem}
Let $S$ be an arithmetical semiring. Then every strongly irreducible ideal of $S$ is a $z$-ideal if and only if every ideal of $S$ is a $z$-ideal.
\end{theorem}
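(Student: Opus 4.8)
The plan is to prove the equivalence by handling the nontrivial direction carefully. The backward implication is immediate: if every ideal of $S$ is a $z$-ideal, then in particular every strongly irreducible ideal is a $z$-ideal. So the real content is the forward direction, where we assume every strongly irreducible ideal is a $z$-ideal and must conclude that every ideal $\ma$ of $S$ is a $z$-ideal, i.e. $\ma=\cz(\ma)$ by Lemma \ref{lclk}(\ref{altd}).

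The key idea is to exploit the representation of an arbitrary ideal as an intersection of (strongly) irreducible ideals, which is available because $S$ is arithmetical. First I would recall that in an arithmetical semiring, by \cite[Theorem 3]{I56}, irreducible and strongly irreducible ideals coincide, so the representation of any ideal as an intersection of irreducible ideals is simultaneously a representation in terms of strongly irreducible ideals. Thus, given a proper ideal $\ma$, I would write $\ma=\bigcap_{i\in I}\mb_i$ where each $\mb_i$ is a strongly irreducible ideal containing $\ma$. By the standing hypothesis, each $\mb_i$ is then a $z$-ideal. Since by Proposition \ref{epzi}(\ref{ainz}) any intersection of $z$-ideals is a $z$-ideal, it follows that $\ma=\bigcap_{i\in I}\mb_i$ is a $z$-ideal, which is exactly what we want. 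The case $\ma=S$ is trivial since $S$ itself is a $z$-ideal.

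The main obstacle I anticipate is justifying the representation of an arbitrary ideal as an intersection of strongly irreducible (equivalently, irreducible) ideals containing it. For \emph{$z$-ideals} this was established in Proposition \ref{rpir}, but here we need it for arbitrary ideals, which is a standard fact for semirings (the analogue of the representation of any ideal as an intersection of irreducible ideals, obtainable by a Zorn's-lemma argument separating $\ma$ from each element $x\notin\ma$). I would cite this from \cite{G99} or reprove it in one line: for each $x\notin\ma$, a Zorn's-lemma argument produces an irreducible ideal $\mb_x\supseteq\ma$ with $x\notin\mb_x$, whence $\ma=\bigcap_{x\notin\ma}\mb_x$. The rest is bookkeeping: invoking the arithmeticity to upgrade ``irreducible'' to ``strongly irreducible,'' applying the hypothesis to conclude each $\mb_x\in\z$, and closing with Proposition \ref{epzi}(\ref{ainz}).

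Thus the overall structure is short: one direction is a tautology, and the other reduces to the intersection-representation of ideals together with the arithmetical equivalence of irreducibility and strong irreducibility. No delicate estimate is involved; the only point requiring care is ensuring that the decomposition is in terms of strongly irreducible ideals so that the hypothesis applies verbatim.
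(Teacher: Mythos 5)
Your proof is correct, but it assembles the ingredients differently from the paper. The paper also separates a point $x\notin\ma$ from $\ma$ by a Zorn's-lemma-maximal ideal $\p$ with $\ma\subseteq\p$ and $x\notin\p$, but it then proves \emph{directly} that $\p$ is strongly irreducible, using the distributivity of the ideal lattice in the form $(\p+\mathfrak{r})\cap(\p+\mathfrak{s})=\p+(\mathfrak{r}\cap\mathfrak{s})$, and closes the argument element-wise: starting from $\mathcal{M}_x\supseteq\mathcal{M}_y$ with $y\in\ma$, $x\notin\ma$, it derives a contradiction from $\p$ being a $z$-ideal. You instead observe only that the maximal separating ideal is irreducible (which needs no arithmeticity), upgrade to strongly irreducible by citing the equivalence of irreducible and strongly irreducible ideals in arithmetical semirings (Is\'{e}ki's result, which the paper itself invokes in Theorem \ref{pcas}), and then finish globally: $\ma=\bigcap_{x\notin\ma}\mb_x$ is an intersection of $z$-ideals and hence a $z$-ideal by Proposition \ref{epzi}(\ref{ainz}). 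Your route is arguably cleaner and makes the role of the hypothesis more transparent (the arithmetical assumption enters only through the irreducible-versus-strongly-irreducible equivalence, and the $z$-ideal hypothesis only through the intersection representation), at the cost of leaning on the external citation; the paper's version is self-contained on that point, since it extracts strong irreducibility directly from the distributive law. Both arguments are sound, and your handling of the edge case $\ma=S$ and of the representation $\ma=\bigcap_{x\notin\ma}\mb_x$ is correct.
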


\begin{proof}
If every ideal of $S$ is a $z$-ideal, then in particular, every strongly irreducible ideal is also a $z$-ideal. For the converse, suppose that every strongly irreducible ideal is a $z$-ideal. Let $\ma$ be an ideal of $S$. Let $\mathcal{M}_x\supseteq \mathcal{M}_y$ and $y\in \ma$. If possible, assume that $x\notin \ma$. Consider the set poset \[\mathcal{E}:= \left\{\mb\in \id \mid \mb \supseteq \ma, x\notin \mb\right\}.\]
Since $\ma\in \mathcal{E}$, the set $\mathcal{E}$ is nonempty. If $\mathcal{K}$ is a chain in $\mathcal{E}$, then obviously $\bigcup \{\mathfrak{c}\mid \mathfrak{c}\in \mathcal{K}\}$ is an upper bound of $\mathcal{K}$ in $\mathcal{E}$. By Zorn's lemma , this implies that $\mathcal{E}$ has a maximal element $\p$ with the property: $\ma\subseteq \p$ and $x\notin \p$. We claim that $\p$ is a strongly irreducible ideal. Suppose that $\mathfrak{r}\cap \mathfrak{s}\subseteq \p$, for some $\mathfrak{r},$ $\mathfrak{s}\in \id.$ Suppose  $\mathfrak{r}\nsubseteq \p$ and $\mathfrak{s}\nsubseteq \p$. Since $\p$ is maximal with the property: $x\notin \p$, we obtain $x\in \p+ \mathfrak{r}$ and $x\in \p + \mathfrak{s}$. By applying arithmetical property of $S$, we have \[x\in (\p + \mathfrak{r})\cap (\p + \mathfrak{s})=\p + (\mathfrak{r} \cap \mathfrak{s})=\p,\]
a contradiction. Evidently, $y\in \p$ and $\mathcal{M}_x\supseteq \mathcal{M}_y$ implies that $x\in \p$, as $\p$ is a $z$-ideal. This is a contradiction. Hence $x\in \ma$, and hence $\ma$ is a $z$-ideal.
\end{proof}

We conclude with a couple of results on $z$-Noetherian semirings.

\begin{definition}\label{znsr}
A semiring $S$ is called \emph{$z$-Noetherian} if every ascending chain of $z$-ideals in $S$ eventually  stationary. 	
\end{definition}

The following proposition provides a generalization of \cite[Proposition 7.3]{N18}. It is important to note that this result also holds to irreducible ideals of Noetherian commutative rings

\begin{proposition}\label{nssi}
Let $S$ be a $z$-Noetherian semiring. Then every $z$-ideal of $S$ can be represented as an intersection of a finite number of $z$-irreducible ideals of $S$.
\end{proposition}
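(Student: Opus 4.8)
The plan is to mimic the classical Noetherian argument for writing ideals as finite intersections of irreducible ideals, adapted to the lattice of $z$-ideals. The key observation is that $z$-Noetherianity (Definition \ref{znsr}) is exactly the tool that makes the standard ``consider the set of counterexamples'' argument work, provided we only ever manipulate $z$-ideals. First I would argue by contradiction: suppose the set $\mathcal{E}$ of $z$-ideals of $S$ that \emph{cannot} be written as a finite intersection of $z$-irreducible ideals is nonempty. Since $S$ is $z$-Noetherian, every ascending chain of $z$-ideals stabilizes, which is equivalent to the maximal condition on $z$-ideals; hence $\mathcal{E}$ possesses a maximal element $\ma$.

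Next I would exploit the maximality of $\ma$. Being a member of $\mathcal{E}$, the ideal $\ma$ is in particular not $z$-irreducible (a $z$-irreducible ideal is trivially its own one-term intersection). By Definition \ref{isirr}(1), failure of $z$-irreducibility means there exist $z$-ideals $\mb$ and $\mb'$ with $\mb\cap\mb'=\ma$ but $\mb\neq\ma$ and $\mb'\neq\ma$. Since $\ma=\mb\cap\mb'\subseteq\mb$ and $\ma\subseteq\mb'$, together with $\mb\neq\ma$, $\mb'\neq\ma$, we get the strict inclusions $\ma\subsetneq\mb$ and $\ma\subsetneq\mb'$. By maximality of $\ma$ in $\mathcal{E}$, neither $\mb$ nor $\mb'$ lies in $\mathcal{E}$; that is, each of $\mb$ and $\mb'$ \emph{can} be expressed as a finite intersection of $z$-irreducible ideals.

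Finally I would assemble the contradiction: writing $\mb=\bigcap_{k=1}^{m} \mathfrak{q}_k$ and $\mb'=\bigcap_{\ell=1}^{n}\mathfrak{q}'_\ell$ with all $\mathfrak{q}_k$, $\mathfrak{q}'_\ell$ being $z$-irreducible, we obtain
\[
\ma=\mb\cap\mb'=\left(\bigcap_{k=1}^{m}\mathfrak{q}_k\right)\cap\left(\bigcap_{\ell=1}^{n}\mathfrak{q}'_\ell\right),
\]
a finite intersection of $z$-irreducible ideals. This says $\ma\notin\mathcal{E}$, contradicting $\ma\in\mathcal{E}$. Hence $\mathcal{E}=\emptyset$ and every $z$-ideal admits the desired representation.

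I expect the only genuinely delicate point to be the passage from the ascending chain condition in Definition \ref{znsr} to the maximal condition used to extract $\ma$; this is the standard equivalence of the ACC with the maximal condition, but one must check it holds \emph{within the poset of $z$-ideals} rather than all ideals. Since arbitrary intersections of $z$-ideals are again $z$-ideals by Proposition \ref{epzi}(\ref{ainz}), the lattice of $z$-ideals is well-behaved, and the standard argument (any nonempty family without a maximal element yields a strictly ascending chain, contradicting the ACC) goes through verbatim. Everything else is purely formal manipulation of finite intersections, so no computational obstacle arises.
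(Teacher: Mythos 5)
Your proposal is correct and follows essentially the same route as the paper: both consider the family of $z$-ideals admitting no finite $z$-irreducible decomposition, extract a maximal element via $z$-Noetherianity, observe it cannot be $z$-irreducible, and derive a contradiction from the decompositions of the two strictly larger $z$-ideals. Your explicit justification that the ascending chain condition on $z$-ideals yields the maximal condition within the poset of $z$-ideals is a point the paper glosses over, but it is the standard equivalence and does not constitute a different argument.
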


\begin{proof}
Suppose that 
\[\mathcal{F}=\left\{\mb\in  \z \mid \mb\neq \bigcap_{i=1}^n \q_i,\;\q_i \;\text{is $z$-irreducible}\right\}.\] To establish the claim, it is sufficient to prove that $\mathcal{F}=\emptyset$.
Since $S$ is $z$-Noetherian, $\mathcal{F}$ must have a maximal element, denoted by $\ma$. Since $\ma \in \mathcal{F}$, it is not a finite
intersection of $z$-irreducible ideals of $S$. This implies that $\ma$ is not $z$-irreducible. Hence, there are $z$-ideals $\mb$ and $\mb'$ such that  $\mb\supsetneq \ma$, $\mb'\supsetneq \ma$, and $\ma = \mb \cap \mb'.$ Since $\ma$ is
a maximal element of $\mathcal{F}$, we must have $\mb,$ $\mb' \notin \mathcal{F}.$ Therefore, $\mb$ and $\mb'$ are a finite intersection of
$z$-irreducible ideals of $S$. This, in turn, implies that $\ma$ is also a finite intersection of $z$-irreducible
ideals of $S$, a contradiction.
\end{proof}

In a $z$-Noetherian semiring we have another result, but for minimal $z$-prime ideals. This result extends \cite[Proposition 6.7]{L15}.

\begin{proposition}
\label{wnknp} 
If $S$ is a $z$-Noetherian semiring, then the set of minimal $z$-prime ideals of a semiring $S$ is finite.
\end{proposition}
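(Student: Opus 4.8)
The plan is to mimic the classical Noetherian argument showing that a Noetherian ring has only finitely many minimal primes, but carried out entirely inside the lattice of $z$-ideals. I would argue by contradiction using the Noetherian condition to produce a maximal counterexample. Specifically, consider the collection $\mathcal{F}$ of all $z$-ideals $\ma$ of $S$ for which the set of minimal $z$-prime ideals lying over $\ma$ is infinite; the goal is to show $\mathcal{F}=\emptyset$, which applied to $\ma=0$ (or to the smallest $z$-ideal) gives the result. Assuming $\mathcal{F}\neq\emptyset$, the $z$-Noetherian hypothesis (Definition~\ref{znsr}) guarantees $\mathcal{F}$ has a maximal element $\ma$.

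\medskip

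The key dichotomy is whether $\ma$ is itself $z$-prime. First observe that $\ma$ cannot be $z$-prime: if it were, then by Lemma~\ref{rpms} its complement is multiplicatively closed and $\ma$ would be the unique minimal $z$-prime over itself, contradicting infinitude. So $\ma$ is a proper $z$-ideal that is not $z$-prime, hence not $z$-strongly irreducible in a useful sense; more precisely, I would use Proposition~\ref{vpss} together with $z$-semiprimeness. The cleanest route is to find two $z$-ideals $\mb,\mb'$ strictly larger than $\ma$ with $\mb\cap\mb'\subseteq\cz(\ma)=\ma$ (or $\mb\mb'\subseteq\ma$). One produces these by picking $a,b\in S\setminus\ma$ with $a b\in\ma$ (failure of $z$-primeness, after passing through $\cz$) and setting $\mb=\cz(\ma+\langle a\rangle)$, $\mb'=\cz(\ma+\langle b\rangle)$. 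Then every minimal $z$-prime $\p$ over $\ma$ contains $ab$, hence contains $a$ or $b$ (by Lemma~\ref{rpms}, since $S\setminus\p$ is multiplicatively closed), so $\p\supseteq\mb$ or $\p\supseteq\mb'$. Thus the minimal $z$-primes over $\ma$ split into those lying over $\mb$ and those lying over $\mb'$, and at least one family is infinite.

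\medskip

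This is where the maximality of $\ma$ delivers the contradiction: since $\mb\supsetneq\ma$ and $\mb'\supsetneq\ma$, neither $\mb$ nor $\mb'$ lies in $\mathcal{F}$, so each has only finitely many minimal $z$-primes over it. A minimal $z$-prime over $\ma$ that contains $\mb$ need not be minimal over $\mb$, but it contains one such minimal $z$-prime; I would verify that the minimal $z$-primes over $\ma$ containing $\mb$ are exactly the minimal $z$-primes over $\mb$ (a minimal $z$-prime over $\ma$ containing $\mb$ is automatically minimal over $\mb$, since any smaller $z$-prime over $\mb$ also lies over $\ma$). Hence both families are finite, so their union—the full set of minimal $z$-primes over $\ma$—is finite, contradicting $\ma\in\mathcal{F}$. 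Therefore $\mathcal{F}=\emptyset$.

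\medskip

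The main obstacle I anticipate is the step producing $\mb,\mb'$ and correctly matching up minimal $z$-primes: one must work with $z$-closures throughout (ordinary sums and products of ideals need not be $z$-ideals), and the argument that failure of $z$-primeness yields the required elements $a,b$ relies on Proposition~\ref{eqp} and Lemma~\ref{rpms}, which in turn need the \emph{bzi}-semiring hypothesis. I expect the statement as written should carry the standing assumption that $S$ is a \emph{bzi}-semiring (as in Lemma~\ref{rpms} and Proposition~\ref{eqp}); without it the decomposition $\cz(\mb\mb')=\cz(\mb)\cz(\mb')$ and the multiplicative-complement characterization of $z$-primes are unavailable, and the clean splitting of minimal $z$-primes breaks down.
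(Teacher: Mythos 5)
Your argument is correct, but it is a genuinely different proof from the one in the paper. The paper argues topologically: it endows $\mathrm{Spec}_z(S)$ with a Zariski-type topology, observes that the $z$-Noetherian hypothesis makes this space Noetherian, and identifies the (finitely many) irreducible components with the sets $\mathcal{V}(\p)$ for $\p$ a minimal $z$-prime. You instead run the classical ideal-theoretic Noetherian induction: take a maximal member $\ma$ of the family of $z$-ideals with infinitely many minimal $z$-primes over them, note $\ma$ cannot be $z$-prime, split the minimal $z$-primes over $\ma$ between two strictly larger $z$-ideals $\mb$ and $\mb'$, and use maximality to conclude. Your approach is more self-contained (it avoids justifying that ACC on $z$-ideals yields a Noetherian spectrum and that irreducible closed sets have generic points, steps the paper leaves implicit), at the cost of being longer. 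Two small remarks. First, the appeal to Proposition~\ref{vpss} is a detour; non-$z$-primeness of $\ma$ already hands you what you need directly. Second, your worry about the \emph{bzi} hypothesis can be dissolved: rather than extracting elements $a,b\notin\ma$ with $ab\in\ma$ (which does require Proposition~\ref{eqp} and hence \emph{bzi}), take $z$-ideals $\mc,\mc'$ with $\mc\mc'\subseteq\ma$, $\mc\nsubseteq\ma$, $\mc'\nsubseteq\ma$ straight from Definition~\ref{kpri}, and set $\mb=\cz(\ma+\mc)$, $\mb'=\cz(\ma+\mc')$; every minimal $z$-prime over $\ma$ then contains $\mc$ or $\mc'$ by the very definition of $z$-prime, so the splitting goes through without any \emph{bzi} assumption and your proof matches the generality of the stated proposition. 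The remaining steps (a minimal $z$-prime over $\ma$ containing $\mb$ is minimal over $\mb$, and the union of two finite sets is finite) are verified correctly.
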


\begin{proof}
We give a topological proof. If $S$ is $z$-Noetherian, then the topological space (endowed with Zariski topology) $\mathrm{Spec}_z(S) $ is also Noetherian, and thus $\mathrm{Spec}_z(S)$ has  finitely many irreducible
components. Now every irreducible closed subset of $\mathrm{Spec}_z(S) $
is of the form \[\mathcal{V}(\p)=\left\{\q\in \mathrm{Spec}_z(S) \mid \p\subseteq \q\right\},\] where $\p$ is a minimal $z$-prime ideal. Thus $\mathcal{V}(\p)$ is irreducible component if and only if $\p$ is a minimal $z$-prime ideal. Hence, the number of minimal $z$-prime ideals of $S$ is finite.
\end{proof}

\begin{remark}
As a continuation of this work, in \cite{Gos23}, we study further properties of $z$-ideals of several distinguished types of semirings. Specifically, we consider $z$-Noetherian semirings, factor semirings, and idempotent semirings. Additionally, we investigate the behaviour of various types of $z$-ideals (introduced here) under semiring homomorphisms. We also touch upon $z$-primary decompositions of semirings. It would be interesting to provide explicit nontrivial examples of
semirings $S$ for which $\mathrm{Spec}_z(S) $ is completely described. Finally, we explore hull-kernel topologies endowed on these distinguished types of $z$-ideals. In particular, It would be interesting to investigate  $\mathrm{Spec}_z(S) $ further as a topological space.
\end{remark}
\smallskip

\textbf{Acknowledgement.}\,
The author wishes to extend heartfelt gratitude to the anonymous referee for his/her thorough review and invaluable feedback, which greatly enhanced the paper's presentation.
\medskip


\begin{thebibliography}{100}

\bibitem{AK13} A. Altman and S. Kleiman, A term of commutative algebra, \textit{Worldwide center of mathematics, LLC}, 2013.

\bibitem{AAT13}
A. R. Aliabad, F. Azarpanah and A. Taherifar, Relative $z$-ideals in commutative rings, \textit{Comm. Algebra}, \textbf{41}(1) (2013), 325--341.

\bibitem{AM13}
\bysame and R. Mohamadian,  On $z$-ideals and $z^{\circ}$-ideals of power series rings. \textit{J. Math. Ext.}, \textbf{7}(2) (2013), 93--108.

\bibitem{ABN20}
\bysame,  M. Badie, and S. Nazari, An extension of  $z$-ideals and  $z^{\circ}$-ideals,
\textit{Hacet. J. Math. Stat.}, \textbf{49}(1) (2020), 254--272.

\bibitem{AKRA99}
F. Azarpanah,  O.A.S. Karamzadeh, and A. Rezai Aliabad, On $z^{\circ}$-ideals in $C(X)$, \textit{Fund. Math.}, \textbf{160} (1999), 15--25.

\bibitem{AM07}
\bysame and R. Mohamadian, $\sqrt{z}$-ideals and $\sqrt{z^{\circ}}$-ideals in $C(X)$, \textit{Acta Mathematica Sinica.}, \textbf{23} (2007), 989--
996.

\bibitem{AP20}
\bysame and M. Parsinia, On the sum of  $z$-ideals in subrings of  $C(X)$,
\textit{J. Commut. Algebra}, \textbf{12}(4) (2020), 459--466.

\bibitem{A08} A. Azizi, Strongly irreducible ideals, \emph{J. Aust. Math. Soc.}, \textbf{84} (2008), 145--154.

\bibitem{B53} R. L.
Blair, Ideal lattices and the structure of rings, \emph{Trans. the Amer. Math. Soc.}, \textbf{75} (1953),  136--153.   
\bibitem{B72} N. Bourbaki, \emph{Elements of mathematics:
Commutative
Algebra}, Addison-Wesley, Reading, MA, 1972.

\bibitem{Ben21}
A. Benhissi, Chain condition on  $z$-ideals, \textit{Ric. Mat.}, \textbf{70}(2) (2021), 347--352.

\bibitem{BM20}
\bysame and A. Maatallah, A question about higher order  $z$-ideals in commutative rings,
Quaest. Math., \textbf{43}(8) (2020),  1155--1157.

\bibitem{CC19}
A. Connes and C. Consani, Homological algebra in characteristic one, \textit{Higher Struct. J.}, \textbf{3}(1)
(2019), 155--247.

\bibitem{Dub16}
T. Dube, A note on lattices of $z$-ideals of $f$-rings, \textit{N.Y. J. Math.}, \textbf{22} (2016), 351--361.

\bibitem{Dub18}
\bysame,  Some connections between frames of radical ideals and frames of $z$-ideals, \textit{Algebra Univers.}, \textbf{79}(7) (2018), 18 pages.

\bibitem{DO16}
\bysame and O. Ighedo, Higher order  $z$-ideals in commutative rings,
\textit{Miskolc Math. Notes}, \textbf{17}(1) (2016), 171--185.

\bibitem{Mar72}
G. De Marco, On the countably generated  $z$-ideals of  $C(X)$, {Proc. Amer. Math. Soc.}, \textbf{31} (1972), 574--576.

\bibitem{Pag81}
B. de Pagter,  On $z$-ideals and $d$-ideals in Riesz spaces III, \textit{Nederl. Akad. Wetensch. Indag. Math.}, \textbf{43}(4) (1981), 409--422.

\bibitem{F49} L. Fuchs, \"{U}ber die Ideale arithmetischer Ringe, \emph{Comment. Math. Helv.}, \textbf{23} (1949), 334--341.

\bibitem{GG16}
J. Giansiracusa and N. Giansiracusa, Equations of tropical varieties, \textit{Duke Math. J.}, \textbf{165}(18)
(2016), 3379--3433.

\bibitem{GJ60} 
L. Gillman and M. Jerison, \emph{Rings of continuous functions}, D. Van Nostrand Company, Inc., 1960. 

\bibitem{G99} J. S. Golan, \textit{Semirings and their applications}, Springer, 1999. 

\bibitem{Gos23}
A. Goswami, On $z$-ideals and $z$-closure operations of semirings, II (in preparation).

\bibitem{H58}
M. Henriksen, Ideals in semirings with commutative addition, \textit{Amer. Math. Soc.
Notices}, \textbf{6}(3) 31 (1958), 321.

\bibitem{HMW03}
\bysame, J. Mart\'{i}nez, and R. G. Woods, Spaces  $X$  in which all prime  $z$-ideals of  $C(X)$  are minimal or maximal,
\textit{Comment. Math. Univ. Carolin.}, \textbf{44}(2) (2003), 261--294.

\bibitem{HP80}
C. B. Huijsmans and B. de Pagter,  On $z$-ideals and $d$-ideals in Riesz spaces II, \textit{Nederl. Akad. Wetensch. Indag. Math.}, \textbf{42}(4) (1980), 391--408. 

\bibitem{HP80-1}
\bysame,  On $z$-ideals and $d$-ideals in Riesz spaces I, \textit{Nederl. Akad. Wetensch. Indag. Math.}, \textbf{42}(2) (1980), 183--195. 

\bibitem{I59} K. Ilzuka, On the Jacobson radical of a semiring, \textit{Tohoku Math. J.}, \textbf{11} (1959), 409--421. 

\bibitem{I56} K. Is\'{e}ki, Ideal theory of semiring , \textit{Proc. Japan. Acad.}, \textbf{32} (1956), 554--559.

\bibitem{JK19}
V. Joshi and S. Kavishwar, $z$-ideals in lattices,
Acta Sci. Math. (Szeged), \textbf{85}(1-2) (2019), 59--68.


\bibitem{JRT22}
J. June, S. Ray, and J. Tolliver, Lattices, spectral spaces, and closure operations on
idempotent semirings, \textit{J. Algebra}, \textbf{594} (2022), 313--363.

\bibitem{Koh57}
C. W. Kohls, Ideals in rings of continuous functions, \textit{Fund. Math.}, \textbf{45} (1957), 28--50.

\bibitem{L01}
T. Y. Lam, \textit{A first course in noncommutative rings}, Springer, 2001.

\bibitem{L15}
P. Lescot, Prime and primary ideals in semirings, \textit{Osaka J. Math.}, \textbf{52}(3) (2015), 721--737.

\bibitem{Lor19}
O. Lorscheid, Tropical geometry over the tropical hyperfield, preprint, arXiv:1907.01037, 2019.

\bibitem{MB22}
A. Maatallah and A. Benhissi, Higher order  $z$-ideals of special rings,
\textit{Beitr. Algebra Geom.}, \textbf{63}(1) (2022), 167--177.

\bibitem{Don80}
A. Le Donne, On a question concerning countably generated  $z$-ideals of  $C(X)$, 
\textit{Proc. Amer. Math. Soc.}, \textbf{80}(3) (1980),  505--510.

\bibitem{Man68}
M. Mandelker,  Prime  $z$-ideal structure of  $C(R)$,
\textit{Fund. Math.}, \textbf{63} (1968), 145--166.

\bibitem{Mas73}
G. Mason,  $z$-ideals and prime ideals, \textit{J. Algebra}, \textbf{26} (1973), 280--297.

\bibitem{Mas80}
\bysame,  Prime  $z$-ideals of  $C(X)$  and related rings,
\textit{Canad. Math. Bull.}, \textbf{23}(4) (1980), 437--443.

\bibitem{Moh14}
R. Mohammadian, Positive semirings, \textit{J. Adv. Math. Modelling}, \textbf{3}(2) (2014), 103--125.

\bibitem{MC19}
C. S. Manjarekar and A. N. Chavan, $Z$-elements and  $z_j$-elements in multiplicative lattices,
\textit{Palest. J. Math.}, \textbf{8}(1) (2019), 138--147.

\bibitem{MJ20}
M. Masoudi-Arani and R. Jahani-Nezhad,  Generalization of  $z$-ideals in right duo rings, Hacet. \textit{J. Math. Stat.}, \textbf{49}(4) (2020), 1423--1436.

\bibitem{MB20}
A. Maatallah and A. Benhissi, A note on  $z$-ideals and  $z^{\circ}$-ideals of the formal power series rings and polynomial rings in an infinite set of indeterminates, \textit{Algebra Colloq.}, \textbf{27}(3) (2020), 495--508.

\bibitem{N18} P. Nasehpour, Some remarks on ideals of commutative semirings, \textit{Quasigroups Related Systems}, \textbf{26}
(2018), 281--298.

\bibitem{SA92}
M. K. Sen and M. R. Adhikari, On $k$-ideals of semirings, \textit{Internat. J. Math. \& Math. Sci.}, \textbf{15}(2) (1992), 347--350.

\bibitem{SA93}
\bysame, On maximal $k$-ideals of semirings, \textit{Proc. Amer. Math.
Soc.} \textbf{118} (1993), 699--703.


\bibitem{SZ55} W. Slowikowski and W. Zawadowski, A generalisation of maximal ideals method
of Stone and Gelfand, \textit{Fund. Math.}, \textbf{42} (1955), 216--231.

\bibitem{Vir10}
O. Viro, Hyperfields for tropical geometry I. hyperfields and dequantization, preprint, arXiv:
1006.3034, 2010.

\bibitem{VMS19}
E. M. Vechtomov, A. V. Mikhalev, and V. V. Sidorov, Semirings of continuous functions, \textit{J. Math. Sci.}, \textbf{237}(2) (2019), 191--244.

\bibitem{XZ09}
Y. P. Xiao and T. R. Zou, $Z$-implicative algebra and fuzzy $Z$-ideal,
\textit{J. Fuzzy Math.}, \textbf{17}(2) (2009), 335--350.
\end{thebibliography}
\end{document}